\theoremstyle{plain}
\newtheorem{theorem}{Theorem}[section]
\newtheorem{corollary}[theorem]{Corollary}
\newtheorem{lemma}[theorem]{Lemma}
\newtheorem{proposition}[theorem]{Proposition}
\theoremstyle{definition}
\newtheorem{definition}[theorem]{Definition}
\newtheorem{problem}[theorem]{Problem}
\theoremstyle{remark}
\newtheorem{remark}[theorem]{Remark}
\numberwithin{figure}{section}
\numberwithin{equation}{section}
\DeclareMathOperator{\ad}{ad}
\DeclareMathOperator{\Real}{Re}
\DeclareMathOperator{\Imag}{Im}
\newcommand{\vertiii}[1]{{\left\vert\kern-0.25ex\left\vert\kern-0.25ex\left\vert #1 
    \right\vert\kern-0.25ex\right\vert\kern-0.25ex\right\vert}}
\newenvironment{doublecases}
{
	\left\{ 
			\begin{array}{lllll}
}
{			
			\end{array} 
			\right.
}
\begin{document}

\title[Long-Time Asymptotics of MKdV]{Long-Time Asymptotics of the Modified KdV Equation in Weighted Sobolev Spaces}
\author{Gong Chen}
\author{Jiaqi Liu}
\address[Chen]{Department of Mathematics, University of Toronto, Toronto, Ontario M5S 2E4, Canada }
\email{gc@math.toronto.edu}
\address[Liu]{School of Mathematics, University of the Chinese Academy of sciences, Beijing, China }
\email{jqliu@ucas.ac.cn}


\maketitle
\begin{abstract}
The long time behavior of  solutions to  the defocussing modified Korteweg-de Vries (MKdV) equation is established for initial 
conditions in  some weighted Sobolev spaces. Our approach is based on the nonlinear steepest descent method of Deift and Zhou and its reformulation by Dieng and McLaughlin through $\overline{\partial}$-derivatives. To extend the asymptotics to solutions with initial data in lower regularity spaces, we apply a global approximation via PDE techniques.

\medskip

\end{abstract}

\tableofcontents

%
%

\newcommand{\eps}{\varepsilon}
\newcommand{\lam}{\lambda}

\newcommand{\bfN}{\mathbf{N}}
\newcommand{\calbR}{\mathcal{ \breve{R}}}
\newcommand{\rhobar}{\overline{\rho}}
\newcommand{\zetabar}{\overline{\zeta}}

\newcommand{\rarr}{\rightarrow}
\newcommand{\darr}{\downarrow}

\newcommand{\dee}{\partial}
\newcommand{\dbar}{\overline{\partial}}

\newcommand{\dint}{\displaystyle{\int}}

\newcommand{\dotarg}{\, \cdot \, }

%
%

\newcommand{\RHP}{\mathrm{LC}}			
\newcommand{\PC}{\mathrm{PC}}
\newcommand{\w}{w^{(2)}}
%
%

\newcommand{\zbar}{\overline{z}}

\newcommand{\bbC}{\mathbb{C}}
\newcommand{\bbR}{\mathbb{R}}

\newcommand{\calB}{\mathcal{B}}
\newcommand{\calC}{\mathcal{C}}
\newcommand{\calR}{\mathcal{R}}
\newcommand{\calS}{\mathcal{S}}

\newcommand{\ba}{\breve{a}}
\newcommand{\bb}{\breve{b}}

\newcommand{\balpha}{\breve{\alpha}}
\newcommand{\brho}{\breve{\rho}}

\newcommand{\tPhi}{{\widetilde{\Phi}}}

\newcommand{\bfe}{\mathbf{e}}
\newcommand{\bfn}{\mathbf{n}}

\newcommand{\bphi}{\breve{\Phi}}
\newcommand{\bN}{\breve{N}}
\newcommand{\bV}{\breve{V}}
\newcommand{\bR}{\breve{R}}
\newcommand{\bdelta}{\breve{\delta}}
\newcommand{\bzeta}{\breve{\zeta}}
\newcommand{\bbeta}{\breve{\beta}}
\newcommand{\bm}{\breve{m}}
\newcommand{\br}{\breve{r}}
\newcommand{\bnu}{\breve{\nu}}
\newcommand{\bbfN}{\breve{\mathbf{N}}}
\newcommand{\rbar}{\overline{r}}

\newcommand{\One}{\mathbf{1}}

%
%

\newcommand{\bigO}[2][ ]
{
\mathcal{O}_{#1}
\left(
{#2}
\right)
}

\newcommand{\littleO}[1]{{o}\left( {#1} \right)}

\newcommand{\norm}[2]
{
\left\Vert		{#1}	\right\Vert_{#2}
}

%
%

\newcommand{\rowvec}[2]
{
\left(
	\begin{array}{cc}
		{#1}	&	{#2}	
	\end{array}
\right)
}

\newcommand{\uppermat}[1]
{
\left(
	\begin{array}{cc}
	0		&	{#1}	\\
	0		&	0
	\end{array}
\right)
}

\newcommand{\lowermat}[1]
{
\left(
	\begin{array}{cc}
	0		&	0	\\
	{#1}	&	0
	\end{array}
\right)
}

\newcommand{\offdiagmat}[2]
{
\left(
	\begin{array}{cc}
	0		&	{#1}	\\
	{#2}	&	0
	\end{array}
\right)
}

\newcommand{\diagmat}[2]
{
\left(
	\begin{array}{cc}
		{#1}	&	0	\\
		0		&	{#2}
		\end{array}
\right)
}

\newcommand{\Offdiagmat}[2]
{
\left(
	\begin{array}{cc}
		0			&		{#1} 	\\
		\\
		{#2}		&		0
		\end{array}
\right)
}

\newcommand{\twomat}[4]
{
\left(
	\begin{array}{cc}
		{#1}	&	{#2}	\\
		{#3}	&	{#4}
		\end{array}
\right)
}

\newcommand{\unitupper}[1]
{	
	\twomat{1}{#1}{0}{1}
}

\newcommand{\unitlower}[1]
{
	\twomat{1}{0}{#1}{1}
}

\newcommand{\Twomat}[4]
{
\left(
	\begin{array}{cc}
		{#1}	&	{#2}	\\[10pt]
		{#3}	&	{#4}
		\end{array}
\right)
}

%
%
%

\newcommand{\JumpMatrixFactors}[6]
{
	\begin{equation}
	\label{#2}
	{#1} =	\begin{cases}
					{#3} {#4}, 	&	z \in (-\infty,\xi) \\
					\\
					{#5}{#6},	&	z \in (\xi,\infty)
				\end{cases}
	\end{equation}
}


%
%
%

\newcommand{\RMatrix}[9]
{
\begin{equation}
\label{#1}
\begin{aligned}
\left. R_1 \right|_{(\xi,\infty)} 	&= {#2} &	\qquad\qquad		
\left. R_1 \right|_{\Sigma_1}		&= {#3} 
\\[5pt]
\left. R_3 \right|_{(-\infty,\xi)} 	&= {#4} 	&	
\left. R_3 \right|_{\Sigma_2} 	&= {#5}
\\[5pt]
\left. R_4 \right|_{(-\infty,\xi)} 	&= {#6} &	
\left. R_4 \right|_{\Sigma_3} 	&= {#7} 
\\[5pt]
\left. R_6 \right|_{(\xi,\infty)}  	&= {#8} &	
\left. R_6 \right|_{\Sigma_4} 	&= {#9}
\end{aligned}
\end{equation}
}

%
%

%
%
%
%
%
%

\newcommand{\SixMatrix}[6]
{
\begin{figure}
\centering
\caption{#1}
\vskip 15pt
\begin{tikzpicture}
[scale=0.75]
%
%
\draw[thick]	 (-4,0) -- (4,0);
\draw[thick] 	(-4,4) -- (4,-4);
\draw[thick] 	(-4,-4) -- (4,4);
%
%
\draw	[fill]		(0,0)						circle[radius=0.075];
\node[below] at (0,-0.1) 				{$z_0$};
%
%
\node[above] at (3.5,2.5)				{$\Omega_1$};
\node[below]  at (3.5,-2.5)			{$\Omega_6$};
\node[above] at (0,3.25)				{$\Omega_2$};
\node[below] at (0,-3.25)				{$\Omega_5$};
\node[above] at (-3.5,2.5)			{$\Omega_7^+$};
\node[below] at (-3.5,-2.5)			{$\Omega_8^+$};
%
%
\node[above] at (0,1.25)				{$\twomat{1}{0}{0}{1}$};
\node[below] at (0,-1.25)				{$\twomat{1}{0}{0}{1}$};
%
%
\node[right] at (1.20,0.70)			{$#3$};
\node[left]   at (-1.20,0.70)			{$#4$};
\node[left]   at (-1.20,-0.70)			{$#5$};
\node[right] at (1.20,-0.70)			{$#6$};
\end{tikzpicture}
\label{#2}
\end{figure}
}

\newcommand{\sixmatrix}[6]
{
\begin{figure}
\centering
\caption{#1}
\vskip 15pt
\begin{tikzpicture}
[scale=0.75]
%
%
\draw[thick]	 (-4,0) -- (4,0);
\draw[thick] 	(-4,4) -- (4,-4);
\draw[thick] 	(-4,-4) -- (4,4);
%
%
\draw	[fill]		(0,0)						circle[radius=0.075];
\node[below] at (0,-0.1) 				{$-z_0$};
%
%
\node[above] at (3.5,2.5)				{$\Omega_7^-$};
\node[below]  at (3.5,-2.5)			{$\Omega_8^-$};
\node[above] at (0,3.25)				{$\Omega_2$};
\node[below] at (0,-3.25)				{$\Omega_5$};
\node[above] at (-3.5,2.5)			{$\Omega_3$};
\node[below] at (-3.5,-2.5)			{$\Omega_4$};
%
%
\node[above] at (0,1.25)				{$\twomat{1}{0}{0}{1}$};
\node[below] at (0,-1.25)				{$\twomat{1}{0}{0}{1}$};
%
%
\node[right] at (1.20,0.70)			{$#3$};
\node[left]   at (-1.20,0.70)			{$#4$};
\node[left]   at (-1.20,-0.70)			{$#5$};
\node[right] at (1.20,-0.70)			{$#6$};
\end{tikzpicture}
\label{#2}
\end{figure}
}

%
%
%
%

%
%
%
%

\newcommand{\JumpMatrixRightCut}[6]
{
\begin{figure}
\centering
\caption{#1}
\vskip 15pt
\begin{tikzpicture}[scale=0.85]
%
%
\draw [fill] (4,4) circle [radius=0.075];						
\node at (4.0,3.65) {$\xi$};										
%
%
\draw 	[->, thick]  	(4,4) -- (5,5) ;								
\draw		[thick] 		(5,5) -- (6,6) ;
\draw		[->, thick] 	(2,6) -- (3,5) ;								
\draw		[thick]		(3,5) -- (4,4);	
\draw		[->, thick]	(2,2) -- (3,3);								
\draw		[thick]		(3,3) -- (4,4);
\draw		[->,thick]	(4,4) -- (5,3);								
\draw		[thick]  		(5,3) -- (6,2);
%
%
\draw [  thick, blue, decorate, decoration={snake,amplitude=0.5mm}] (4,4)  -- (8,4);				
\node at (1.5,4) {$0 < \arg (\zeta-\xi) < 2\pi$};
%
%
\node at (8.5,8.5)  	{$\Sigma_1$};
\node at (-0.5,8.5) 	{$\Sigma_2$};
\node at (-0.5,-0.5)	{$\Sigma_3$};
\node at (8.5,-0.5) 	{$\Sigma_4$};
%
%
\node at (7,7) {${#3}$};						
\node at (1,7) {${#4}$};						
\node at (1,1) {${#5}$};						
\node at (7,1) {${#6}$};						
\end{tikzpicture}
\label{#2}
\end{figure}
}

%
%

\section{Introduction}

%
%
In this paper we calculate the  
long-time asymptotics of  solutions  to  the defocussing  modified KdV equation (MKdV):
\begin{equation}
\label{MKDV}
u_t + u_{xxx} - 6u^2u_x=0 \qquad (x, t)\in (\bbR, \bbR^+).
\end{equation}
There is a vast body of literature regarding the MKdV equation, in
particular with the local and global well-posedness of the Cauchy
problem. For a summary of known results we refer the reader to Linares-Ponce
\cite{LP}. Without trying to be exhaustive, we mention the works by Kato \cite{Kat}, Kenig-Ponce-Vega
\cite{KPV}, Colliander-Keel-Staffilani-Takaoka-Tao \cite{CKSTT},
Guo \cite{Guo} and Kishimoto \cite{Kis}. In particular, we know
that the MKdV for both the focussing and defocussing cases on the line
is locally well-posed (cf. Kenig-Ponce-Vega \cite{KPV}), and globally
well-posed, (cf. Colliander-Keel-Staffilani-Takaoka-Tao \cite{CKSTT},
Guo \cite{Guo} and Kishimoto \cite{Kis}),  in $H^{s}\left(\mathbb{R}\right)$for
$s\ge\frac{1}{4}$. These results are complemented by several ill-posedness
results (cf. Christ-Colliander-Tao \cite{CCT} and references
therein)   {which establish that  $H^{\frac{1}{4}}(\mathbb{R})$ is optimal if one requires that solutions depend uniformly continuously on the initial data.
	After the completion  of the first version of the current paper, there have been significant progresses regarding the global well-posedness of integrable PDEs on the real line,  in particular for the KdV, mKdV and NLS equations, see Killip-Visan \cite{KV}, Harrop-Griffiths-Killip-Visan \cite{HGKV}. In \cite{HGKV}, for the mKdV equation, the global well-posedness is obtained in $H^\tau(\mathbb{R})$ for $\tau>-\frac{1}{2}$. It is also known that instantaneous norm inflation happens in  $H^\tau(\mathbb{R})$ for $\tau=-\frac{1}{2}$. We again refer to  \cite{HGKV} for details.  }

\smallskip

Besides well-posedness, another fundamental question for dispersive
PDEs is the long-time asymptotics. Using the complete integrability of the MKdV equation,  Deift and Zhou in their seminal work \cite{DZ93} developed the celebrated nonlinear steepest descent method for oscillatory Riemann-Hilbert problems. In the same paper, the authors give explicit asymptotic formulae and error terms for Schwartz class initial data. Since then, analysis of  long-time behavior
of integrable systems have been extensively treated by
many authors. The nonlinear steepest descent method provides a systematic way to 
reduce the original RHP 
to a canonical model RHP whose solution is calculated in terms of special functions.  This reduction is done through a sequence of transformations whose effects do not change  the  long-time behavior of the recovered solution at leading order. In this way, one  obtains the asymptotic behavior of the solution in terms of the spectral data (thus in terms of the initial conditions).

\smallskip

A natural question to ask is whether it is possible to study the asymptotic
behavior of the MKdV equation without relying on the completely integrable
structure. A proof of global existence and a (partial) derivation
of the asymptotic behavior for small localized solutions was later given by Hayashi and
Naumkin in \cite{HN1,HN2} using the method of factorization of operators.
Recently, Germain-Pusateri-Rousset \cite{GPR} use the idea of the space-time resonance to study the long-time asymptotics of
small data and soliton stability problem. Also a precise derivation of
asymptotics and a proof of asymptotic completeness, was given by Harrop-Griffiths
\cite{HGB} using wave packets analysis. Overall, although PDE techniques
do not rely on the complete integrability, to our best knowledge,  certain
smallness assumptions on the initial data are required. 

\smallskip
In the present paper, we use the inverse scattering transform/nonlinear steepest descent
to study the long-time asymptotics of  solution to the MKdV equation without smallness assumption on the initial data. We give a full description of the long-time behavior of solutions in the weighted Sobolev space $H^{2,1}$ which is necessary to construct the solution { via inverse scattering}  and extend these results to other Sobolev spaces including $H^{1,1}$, $H^{\frac{1}{4},1}$   and $L^{2,1}$ via a global approximation argument. 

\smallskip

In Deift-Zhou \cite{DZ93}, a key step in the nonlinear steepest descent method consists of  deforming the contour
associated to  the RHP in such a way that  the phase function  with oscillatory 
dependence on parameters become exponential decay.
In general the entries of the jump matrix are not analytic, so direct analytic extension off the real axis is not possible. Instead
they must be approximated by rational functions and this results in some error term in the recovered solution. Therefore, in the context of nonlinear steepest descent, most results are carried out
under the assumptions that the initial data belong to the Schwartz
space.

\smallskip

In \cite{Zhou98},  Xin Zhou developed a rigorous analysis of the direct and inverse scattering transform of the AKNS system
for a class of initial conditions $u_0(x)=u(x,t=0)$ belonging to the space  $H^{i,j}(\bbR)$.
Here,  $H^{i,j}(\bbR)$  denotes  
the completion of $C_0^\infty(\bbR)$ in the norm
\begin{equation}
\label{sp: weighted}
    \norm{u}{H^{i,j}(\bbR)}
= \left( \norm{(1+|x|^j)u}{2}^2 + \norm{u^{(i)}}{2}^2 \right)^{1/2}. 
\end{equation}
Recently, much effort has been devoted to relax the regularities
of the initial data. In particular, among the most celebrated results
concerning nonlinear Schr\"odinger equations, we point out the work
of Deift-Zhou \cite{DZ03} where they provide the asymptotics for
the NLS in the weighted space $L^{2,1}$. This topology is more or
less optimal from the views of PDE and inverse scattering transformations.
The global $L^{2}$ existence of the cubic NLS can be carried out
by the $L_{t}^{4}L_{x}^{\infty}$ Strichartz estimate and the conservation
of the $L^{2}$ norm. But in order to obtain the precise asymptotics,
one needs to \textquotedblleft pay the price of weights\textquotedblright{}, i.e. working with the weighted space $L^{2,1}$.

\smallskip

Dieng and McLaughlin in \cite{DM08} (see also an extended version \cite{DMM18}) developed a variant of Deift-Zhou method. In their approach 
rational approximation of the reflection coefficient is replaced by some 
non-analytic extension of the jump matrices off the real axis, which  leads to a $\bar{\partial}$-problem to 
be solved in some regions of the complex plane. The
new  $\bar{\partial}$-problem can be reduced to an integral equation and is solvable through
Neumann series.   
These ideas were originally implemented by Miller and McLaughlin \cite{MM08} to the 
study the  asymptotics of orthogonal polynomials. This method has shown its robustness in its application to other integrable models. Notably, for focussing NLS and derivative NLS, they were 
successfully applied to address the soliton resolution in \cite{BJM16} and \cite{JLPS18} respectively. In this paper, we incorporate this approach into the framework of \cite{DZ93}  to calculate the long time behavior of the defocussing MKdV equation in weighted Sobolev spaces.
The soliton resolution of the focussing MKdV equation will be addressed in a forthcoming article \cite{CL19}.

\subsection{Direct and inverse scattering formalism} 

To describe our approach, we recall that \eqref{MKDV} generates an iso-spectral flow for the problem
\begin{equation}
\label{L}
\frac{d}{dx} \Psi = -iz \sigma_3 \Psi + U(x) \Psi
\end{equation}
where
$$ \sigma_3 = \diagmat{1}{-1}, \,\,\, U(x) = \offdiagmat{iu(x)}{\overline{iu(x)}}.$$
This is a standard AKNS system. If {$u \in L^1(\bbR) $}, equation \eqref{L} admits bounded 
solutions for $z \in \mathbb{R}$.   There exist unique solutions $\Psi^\pm$ of \eqref{L} obeying the the following space asymptotic conditions
$$\lim_{x \rarr \pm \infty} \Psi^\pm(x,z) e^{-ix z \sigma_3} = \diagmat{1}{1},$$
and there is a matrix $T(z)$, the transition matrix, with 
 $\Psi^+(x,z)=\Psi^-(x,z) T(z)$.
The matrix $T(z)$ takes the form
\begin{equation} \label{matrixT}
 T(z) = \twomat{a(z)}{\bb(z)}{b(z)}{\ba(z)} 
 \end{equation}
and  the determinant relation gives
$$ a(z)\ba(z) - b(z)\bb(z) = 1 $$
Combining this with the symmetry relations 
\begin{align} \label{symmetry}
\ba(z)=\overline{a( \zbar )}, \quad \bb(z) = \overline{ b(\zbar)}. 
\end{align}
we arrive at
$$|a(z)|^2-|b(z)|^2=1$$
and conclude that $a(z)$ is zero-free.

By the standard inverse scattering theory, we formulate the reflection coefficient:
\begin{equation}
\label{reflection}
r(z)=\bb(z)/a(z), \quad z\in\bbR
\end{equation}
The functions $r(z)$ is  called the \emph{scattering data} 
for the initial data $u_0$ satisfying the following symmetry relation:
\begin{equation}
\label{minus}
r(z)=-\overline{r(-z)}.
\end{equation}

We also have the following identity
\begin{equation*}
a(z) \ba(z)   = (1-|r(z)|^2)^{-1}  \quad z\in\bbR.
\end{equation*}

{In \cite{Zhou98}, it is shown that for $k, j$ integers with $k\geq 0$, $j\geq 1$, the direct scattering map $\mathcal{R}$ maps $H^{k,j}(\bbR)$ onto $H^{j,k}_1=H^{j,k}(\bbR)\cap \lbrace r: \norm{r}{L^\infty} <1\rbrace$ where $H^{j,k}$ norm is defined in \eqref{sp: weighted} and the map $\mathcal{R}: u_0 \mapsto r$ is Lipschitz continuous. }Since we are dealing with the defocussing MKdV,  only the reflection coefficient $r$ is needed for the reconstruction of the solution. The long-time behavior of the solution to the MKdV equation is obtained through a sequence of transformations of the following RHP:

\begin{problem}
\label{prob:DNLS.RH0}
Given $r \in H^{1,2}(\bbR)$ for $z \in \bbR$, 
find a $2\times 2$ matrix-valued function $m(z;x,t)$ on $\bbC \setminus \bbR$ with the following properties:
\begin{enumerate}
\item		$m(z;x,t) \rarr I$ as $|z| \rarr \infty$,
			\medskip
			
\item		$m(z;,x,t)$ is analytic for $z \in \bbC \setminus \bbR$ with continuous boundary values
			$$m_\pm(z;x,t) = \lim_{\eps \darr 0} m(z\pm i\eps;x,t),$$
			\medskip
			
\item		
The jump relation $m_+(z;x,t) = m_	-(z;x,t) e^{-i\theta \ad \sigma_3} v(z)$ holds, where
			\begin{equation}
			\label{mkdv.V}
			e^{-i\theta \ad \sigma_3} v(z)	=		\Twomat{1-|r(z)|^2}
											{-\overline{r(z)}e^{-2i\theta}}{r(z)e^{2i\theta}}{1}
			\end{equation}
and the real phase function $\theta$ is given by
\begin{equation}
\label{mkdv.phase}
\theta(z;x,t) = 4tz^3+xz
\end{equation}
with stationary points
\begin{equation}
    \label{stationary pt}
    \pm z_0=\pm \sqrt{ \dfrac{-x}{12t} }
\end{equation}
\end{enumerate}
\end{problem}
Note that the jump matrix $v$ admits the following factorization on $\bbR$:
\begin{align*}
e^{-i\theta \ad \sigma_3} v(z)	&=\twomat{1}{-\rbar  e^{-2i\theta}}{0}{1}
					\twomat{1}{0}{r  e^{2i\theta}}{1} \\
					   &=(1-w_\theta^-)^{-1}(1+ w_\theta^+). 
\end{align*}
We define
\begin{align*}
\mu= m_+(I+ w_\theta^+ )^{-1}=m_-(I - w_\theta^-)^{-1}
\end{align*}
 then it is well known that solvability of the RHP above is equivalent to the solvability of the following Beals-Coifman integral equation:
\begin{align}
\label{BC-int}
\mu(z; x,t) &= I +C_{w_\theta}\mu(z; x,t)\\
                &= I+C^+\mu w_\theta^- +C^-\mu w_\theta^+
\end{align}
Here $C^\pm$ is the Cauchy projection:
\begin{equation}
(C^\pm f)(z)= \lim_{z\to \Sigma_\pm}\dfrac{1}{2\pi i} \int_{\Sigma} \dfrac{f(s)}{s-z}ds
\end{equation}
and  $+(-)$ denotes taking limit from the positive (negative) side of the oriented contour.
\bigskip\noindent
From the solution of Problem \ref{prob:DNLS.RH0}, we recover
\begin{align}
\label{mkdv.q}
u(x,t) &= \lim_{z \rarr \infty} -2 z m_{12}(x,t,z)\\
\label{mkdv.BC}
        &=\left[ \dfrac{-i}{\pi}\int_\bbR \mu (w_\theta^-+w_\theta^+) \right]_{12}
\end{align}
where the limit is taken in $\bbC\setminus \bbR$ along any direction not tangent to $\bbR$.  

\subsection{Main results}
The central results of this paper are the following theorems
that give the long-time behavior of  the  solution $u(x,t)$
 of \eqref{MKDV} in different regions in the $(x,t)$ plane respectively.
 
%
%

\begin{figure}[h!]
\caption{Five Regions}
\vskip 15pt

\begin{tikzpicture}[scale=0.7]
\draw[fill] (0,0) circle[radius=0.075];
\draw[thick] 	(-8,0) -- (-2,0);
\draw[thick]		(-2,0) -- (0,0);
\draw[->,thick,>=stealth]	(0,0) -- (6,0);
\draw[thick]		(2,0) -- (7,0);
\draw		(-7.2, -0.2) -- (-3.5, -0.2); 
\node[below] at (-5.5, -0.2) {I};
\draw		(-5, 0.2) -- (-0.8, 0.2); 
\node[above] at (-2.5, 0.1) {II};

\draw	(-2, -0.2) -- (2, -0.2); 
\node[below] at (0, -.2){III};
\node[above] at (0,.0){0};

\draw	(1, 0.2) -- (5, 0.2); 
\node[above] at (3,0.1) {IV};

\node[right] at (7, 0) {$x$-axis};
\draw (4,-0.2)--(7, -0.2);
\node [below] at (6, -0.2) {V};

\end{tikzpicture}
\label{fig: regions}
\end{figure}
{
For $M>1$ and $z_0$ the stationary point given by \eqref{stationary pt} and $\tau$ a parameter given by \eqref{tau}, we define the regions as follows:
\begin{itemize}
    \item Region I: $M^{-1}<z_0<M,\,\tau\to\infty$;
    \item Region II: $M^{-1}\leq \tau $;
    \item Region III: $\tau \leq M$;
    \item Region IV: $|z_0|\leq M, \, \tau\geq M^{-1}$;
    \item Region V: $ |z_0| > M^{-1}, \, \tau\to \infty$.
\end{itemize}
\begin{remark}
    \label{rmk: regions}
    We give some remarks on the various regions above:
    \begin{enumerate}
       \item The three main regions of interest are Regions I, III and V. In the case of focussing mKdV, they are named as oscillatory region, self-similar region and soliton region respectively, see \cite{CL19}. And the remaining two regions, Region II and Region IV can be regarded as transitions. They are treated separately because the asymptotics are calculated differently. 
        \item The calculations for Region I involves the large parameter $\tau$ and we keep $z_0$ a positive real number. In Regions II and III, $z_0$ can decay to $0$ as $t\to \infty$ while $\tau$ is bounded above. The calculations instead depend on scaling out $z_0$ and $t^{-1/3}$ respectively. The matching of the asymptotics in Region I and Region II is discussed in \cite[Section 6]{DZ93} while the matching between Region II and Region III is given in remark \ref{rmk:I overlap}.
    \end{enumerate}
\end{remark}
}
%
%

\begin{theorem}
\label{thm:main1}
Given initial data $u_0 \in H^{2,1}(\bbR)$, let $u$ be the solution to the MKdV equation
\begin{equation}
	u_t + u_{xxx} - 6u^2u_x=0 \qquad (x, t)\in (\mathbb{R}, \mathbb{R}^+)
\end{equation} given by the reconstruction formula \eqref{mkdv.q}. Let $z_0$, \eqref{stationary pt}, be the stationary point of the phase function \eqref{mkdv.phase}, and define
\begin{equation}
    \label{tau}
    \tau=z_0^3 t
\end{equation}
and 
\begin{equation}
\label{kappa}
\kappa=-\dfrac{1}{2\pi}\log(1-|r(z_0)|^2).
\end{equation}
where $r$ is defined in \eqref{reflection}.
Then we have the following asymptotics
\begin{enumerate}
\item[(i)]  In Region  $\text{I}$, 
\begin{align*}
 u(x,t) &= \left( \dfrac{\kappa}{3tz_0}\right)^{1/2}\cos \left(16tz_0^3-\kappa\log(192tz_0^3)+\phi(z_0) \right) \\
 &\quad+  \mathcal{O}\left( (z_0 t)^{-3/4}\right) 
\end{align*}
where 
\begin{align*}
\phi(z_0) &=\arg \Gamma(i\kappa)-\dfrac{\pi}{4}-\arg r(z_0)\\
        &\quad +\dfrac{1}{\pi}\int_{-z_0}^{z_0}\log\left( \dfrac{1-|r(\zeta)|^2}{1-|r(z_0)|^2} \right)\dfrac{d\zeta}{\zeta-z_0}. 
\end{align*}
\item[(ii)] In Region  $\text{II}$, 
$$u(x,t)=\dfrac{1}{(3t)^{1/3}}P\left( \dfrac{x}{ (3t)^{1/3} } \right)+\mathcal{O} \left( (z_0 t)^{-3/4} \right).$$
\item[(iii)]  In Region  $\text{III}$, 
$$u(x,t)=\dfrac{1}{(3t)^{1/3}}P\left( \dfrac{x}{ (3t)^{1/3} } \right)+\mathcal{O} \left(  t^{ -1/2 } \right).$$

\item[(iv)] In Region  $\text{IV}$, 
$$u(x,t)=\dfrac{1}{(3t)^{1/3}}P\left( \dfrac{x}{ (3t)^{1/3} } \right)+\mathcal{O} \left( (t\tau)^{-1/2}+ \dfrac{e^{-16\tau^{2/3}\eta}} {t^{ 1/2 } }\right)$$
 where we let $0<\eta< (M)^{-1/3}$.
\item[(v)] In Region  $\text{V}$, 
$$u(x,t)=\mathcal{O} \left( t^{-1}\right). $$
\end{enumerate}
In the above asymptotics for Regions $\text{II}$, $\text{III}$, $\text{IV}$,  $P$ is a solution of the Painlev\'e $\text{II}$ equation$$P''(s)-sP(s)-2P^3(s)=0$$
determined by $r(0)$. Note that given $r(z)\in H^1(\bbR)$, $r$ is defined pointwise and $r(0)$ makes sense. Also note that in all asymptotics above, the implicit constants in the remainder terms depend only on
$\norm{r}{H^{1}(\bbR)}$.
\end{theorem}
We give several remarks for the  statements above.
\begin{remark}
	In this paper, to derive asymptotics, our main focus is to establish estimates for the error terms which only depend on $\norm{r}{H^{1}(\bbR)}$. We claim that this dependence is uniform in each of the five regions defined in figure \ref{fig: regions}. All  leading order terms from the asymptotic formulae in all regions are obtained from special functions, namely parabolic cylinder functions and Painlev\'e II. For brevity, we do not repeat lengthy identical steps. We refer to Deift-Zhou \cite{DZ93} for full details.
\end{remark}


\begin{remark}
	From the view of the scattering theory, it is natural to ask if one can determine the initial data uniquely from the asymptotics of a solution. Here we point out that in our asymptotics formulae, the solution $P$
	to the Painlev\'e II equation only depends on $r\left(0\right)$,  the reflection coefficient evaluated at the origin. {For an explicit relation between $r(0)$ and $P$ the solution to \textit{ Painlev\'e} \text{II}, see \cite[p.358-p.359]{DZ03}.}
	Therefore, if one only looks at the asymptotics in regions $\text{II},$ $\text{III},$ $\text{IV},$ and $\text{V}$,
	these pieces of information are not sufficient to determine the initial
	data which produce this solution. To obtain the full information of
	the initial data, we have to go to Region $\text{I}$ from which
	one can determine the phase and modulus of the reflection coefficient
	from the formulae given by the parabolic cylinder. For more details,
	see Deift-Zhou \cite{DZ93}. In this defocussing case,
	Region $\text{I}$ is the most physically interesting. But in the focussing problem, breathers can appear in all regions. For more details, see our forthcoming article \cite{CL19}.
	
\end{remark}
{
\begin{remark}In \cite{GPR} and  \cite{HGB}, the long-time asymptotics of small solutions to the mKdV are established.  Moreover, the decay of  the spacial derivatives of the solutions  are also obtained. In \cite{HGB}, the $L^2$ estimates of error terms are estimated. In the theorem above, we only compute the asymptotics in the pointwise sense. In principle, with the analysis of the $L^2$ mapping properties of the $\dbar$ problem, we can also obtain the $L^2$ estimates for error terms but we do not pursue it here since this will require a different argument. Taking $z_0=\sqrt{\frac{-x}{12t}}$ in the leading order terms in expressions from the theorem above, the resulting formulas are the same as the leading order terms in \cite{GPR} and  \cite{HGB}. Plugging $z_0$ into the error terms above, we observe that actually in the pointwise sense, the error terms are sharper than those in  \cite{GPR} and  \cite{HGB} .
\end{remark}
}

The paper ends with a section to extend the asymptotics from Theorem \ref{thm:main1} to rougher solutions. With the uniform estimates on error terms, we apply approximation arguments to study solutions in various low regularity spaces:  $H^{1}$, $H^{1/4}$ and $L^2$ with some weights. Using  the local
 well-posedness in $H^{k}\left(\mathbb{R}\right)$ with $k\ge\frac{1}{4}$
 obtained by Kenig-Ponce-Vega see \cite{KPV}, the growth estimates
 for the $H^{k}$ norm due to Colliander-Keel-Staffilani-Takaoka-Tao
 \cite{CKSTT}, Guo \cite{Guo} and Kishimoto \cite{Kis}, and the recent advance on globally well-posedness by Harrop-Griffiths-Killip-Visan \cite{HGKV} in $H^\tau(\mathbb{R})$, $\tau>-1/2$ we employ
 a global approximation argument to extend our long-time asymptotics
 to $H^{k,2}$ with $k\geq 0$. Then we can extend the results in the previous theorem and obtain the following:
\begin{theorem}
	\label{thm:main2} For any initial data $u_{0}\in H^{k,1}\left(\mathbb{R}\right)$ with $k\geq 0$, the solution\footnote{For the precise meaning of solutions, we refer to Theorem \ref{thm:KVPlocal} and Theorem \ref{thm:gwphgkv} for details.} to the MKdV equation
	\begin{equation}
		\label{MKDV1}
		u_t + u_{xxx} - 6u^2u_x=0 \qquad (x, t)\in (\bbR, \bbR^+)
	\end{equation}
has the same  asymptotics as in our main Theorem \ref{thm:main1}.
\end{theorem}

\smallskip
 We notice that one can trace all the details in our implementing of the nonlinear steepest descent and notice that actually
 it suffices to require the weights in $x$ to be $\left\langle x\right\rangle ^{s}$
 with $s>\frac{1}{2}$. Since  for the general case,  $s>\frac{1}{2}$ is sufficient
for us to apply the Sobolev embedding and the estimate of modulus
of continuity of the reflection coefficients in  the Riemann-Hilbert
problem.  After establishing the computations for $s=1$, to get the general results for $s>\frac{1}{2}$, one just needs to use the standard analysis of Jost functions and mollifiers.

\begin{corollary}
	\label{cor:main2} For any initial data $u_{0}\in H^{k,s}\left(\mathbb{R}\right)$ with
	$s>\frac{1}{2}$ and $k\geq 0$, the solution to the MKdV equation \eqref{MKDV1}
has the same leading order asymptotics as in  main Theorem \ref{thm:main1} and the error terms can be estimated as in Remark  \ref{extention}.
\end{corollary}

Since computations from $s=1$ to general $s>\frac{1}{2}$ are quite routine, see   Cuccagna-Pelinovsky \cite{CuPe} for computations for the cubic NLS. In particular, the direct scattering of the mKdV equation is same as the NLS.
Hereinafter, for the sake of simplicity, we just focus on the case where $s=1$.

\subsection{Notations}
Let $\sigma_3$ be the third Pauli matrix:
$$\sigma_3=\twomat{1}{0}{0}{-1}$$
and define the matrix operation 
$$e^{\ad\sigma_3}A=\twomat{a}{e^{2} b}{e^{-2}c}{d}$$

	We define Fourier transforms as 
	\begin{equation}
	\hat{h}\left(\xi\right)=\mathcal{F}\left[h\right]\left(\xi\right)=\frac{1}{2\pi}\int_\bbR e^{-ix\xi}h\left(x\right)\,dx.\label{eq:FT}
	\end{equation}
	Using the Fourier transform, one can define the fractional weighted
	Sobolev spaces:
	\begin{equation}
	H^{k,s}\left(\mathbb{R}\right):=\left\{ h:\,\left\langle 1+\left|\xi\right|^{2}\right\rangle ^{\frac{k}{2}}\hat{h}\left(\xi\right)\in L^{2}\left(\mathbb{R}\right),\:\left\langle 1+x^{2}\right\rangle ^{\frac{s}{2}}h\in L^{2}\left(\mathbb{R}\right)\right\} .\label{eq:weight}
	\end{equation}
	
As usual, $"A:=B"$
or $"B=:A"$
is the definition of $A$ by means of the expression $B$. We use
the notation $\langle x\rangle=\left(1+|x|^{2}\right)^{\frac{1}{2}}$.
For positive quantities $a$ and $b$, we write $a\lesssim b$ for
$a\leq Cb$ where $C$ is some prescribed constant. Also $a\simeq b$
for $a\lesssim b$ and $b\lesssim a$. Throughout, we use $u_{t}:=\frac{\partial}{\partial_{t}}u$,
$u_{x}:=\frac{\partial}{\partial x}u$.

\subsection{Some discussion}

To finish the introduction, we highlight certain features of this paper. 

\smallskip

Firstly, compared with the analysis of the nonlinear Schr\"odinger
equation in weighted Sobolev spaces 
\cite{DZ03}, the defocussing MKdV exhibits more complicated behavior in terms of long-time asymptotics. This follows from the fact that phase function for the nonlinear Schr\"odinger equation has one
single stationary point while the phase function for the
MKdV equation has two stationary points.  The MKdV equation has the oscillatory region (Region I), the self-similar region (Region II-IV) and the soliton region (Region V), each of which has different leading order terms and error terms. These two stationary
points, due to symmetry, will lead to a real-valued solution to
the equation plus a higher order correction term. More importantly, unlike the NLS equation, where we can build parametrices directly out of the parabolic cylinder functions , for the MKdV equation, extra terms have to be eliminated before arriving at the model problem.  Thus, due to the complicated structure of the MKdV equation, we will explore some new applications of the $\bar{\partial}$-steepest descent method. We instead conjugate the jump matrices by a diagonal matrix $\mathcal{P}$ (cf. \eqref{eq: para}). Meanwhile in certain self similar regions, the two stationary
points will approach each other as $t\to\infty$. In this case, the decay in time results from a scaling factor instead of oscillation. We believe that these are new applications of the $\overline{\partial}$ nonlinear steepest descent method and can be used to treat other integrable models.

\smallskip

Secondly, we extend the asymptotics of the MKdV equation to solutions with initial
data in lower regularity spaces using a global approximation via PDE
techniques. In Deift-Zhou \cite{DZ03}, due to the $L_{t}^{4}L_{x}^{\infty}$
Strichartz estimates for the linear Schr\"odinger equation and the
conservation of the $L^{2}$ norm, the authors can globally approximate
the solution to the nonlinear Schr\"odinger equation with data in
$L^{2,1}$ using the Beals-Coifman representation of solutions
directly. Unlike the Schr\"odinger equation, the smoothing estimates and
Strichartz estimates for the Airy equation and the MKdV equation are much more
involved. For example,  one needs $L_{x}^{4}L_{t}^{\infty}$ estimate which acts
like a maximal operator. To directly work on the
solution to the MKdV equation via inverse scattering to establish the smoothing
estimates and Strichartz estimates, one needs estimates for pseudo-differential
operators with very rough symbols. To avoid these technicalities,
we first identify the {solution by inverse scattering} with the solution
given by the Duhamel formula, which we call a strong solution. The
equivalence of these two types of solutions in $H^{2,1}\left(\mathbb{R}\right)$
is not transparent since there are not enough smoothness for taking derivatives.
Relying on smoothing estimates and the bijectivity of the scattering
and inverse scattering transforms by Zhou \cite{Zhou98}
which plays the role of Plancherel theorem in Fourier analysis, we
show these two types of solutions are the same at the level of $H^{2,1}\left(\mathbb{R}\right)$
which is necessary to construct the {solutions by inverse scattering}. Since
the strong solutions by construction enjoy Strichartz estimates and
smoothing estimates, by our identification, the {solutions by inverse scattering}
also satisfy these estimates. Then we can use Strichartz estimates
and smoothing estimates to pass limits of solutions by inverse scattering
to obtain the asymptotics for rougher initial data in $H^{1,1}\left(\mathbb{R}\right)$
and $H^{\frac{1}{4},1}\left(\mathbb{R}\right)$. To illustrate the
importance of $H^{1}\left(\mathbb{R}\right)$ and $H^{\frac{1}{4}}\left(\mathbb{R}\right)$,
we note that in $H^{1}\left(\mathbb{R}\right)$, the MKdV equation has the
energy conservation. On the other hand,  $H^{\frac{1}{4}}\left(\mathbb{R}\right)$
is the optimal space to use iterations to construct the solution to
the MKdV equation. { With the recent advances of globally well-posedness of mKdV equations, \cite{HGKV}, with appropriate notations of solutions, our results can be naturally extended to solutions with initial data in the  weighted  $L^2(\mathbb{R)}$ space. }
For details of the proof, we refer the reader to Section \ref{sec: approx}.

Finally, we give a general description of the  derivation of the long-time asymptotics and performing nonlinear steepest descent. The major part of this paper is devoted to the study of the Region I whose leading behavior is given by parabolic cylinder functions.

\smallskip
The first step (Section \ref{sec:prep}),  is to conjugate 
the matrix $m$ with a scalar function $\delta(z)$  
which solves the  scalar model RHP Problem  \ref{prob:RH.delta}. This conjugation leads to a new RHP, Problem \ref{prob:DNLS.RHP1}. The purpose of this is to prepare for the lower/upper factorization of the jump matrix on the part of the real axis between two stationary point. This is needed in the contour deformation described in Section \ref{sec:mixed}.

\smallskip

The second  step
( Section \ref{sec:mixed}) is a  deformation of contour from $\bbR$ to a new
contour  $\Sigma^{(2)}$ (Figure \ref{fig:contour-2}). It is to guarantee that the phase factors in the jump matrix \eqref{DNLS.V1}
have the desired exponential decay in time along the deformed contours. Inevitably this transformation will results in certain non-analyticity in the sectors $\Omega_1\cup \Omega_3 \cup \Omega_4 \cup \Omega_6\cup\Omega_7^\pm \cup\Omega_8^\pm$,  which leads to
a mixed $\dbar$--RHP-problem,  Problem \ref{prob:DNLS.RHP.dbar}.

\smallskip

The  third step is a `factorization' of  $m^{(2)}$ 
in the form $m^{(2)} =  m^{(3)} m^{\RHP}$ where $m^{\RHP}$ is the solution of a 
localized RHP,
Problem \ref{MKDV.RHP.local}, and $m^{(3)}$ 
a solution of $\bar\partial$ problem, Problem \ref{prob:DNLS.dbar}. 
The term "localized" means the reflection coefficient $r(z)$ is fixed at $\pm z_0$ along the deformed contours. We then solve this localized RHP whose solution is given by parabolic cylinder functions. Since we have to separate the contribution from two stationary points $\pm z_0$, some error terms appear alongside and their decay rate are estimated.

\smallskip

The fourth step (Section \ref{sec:dbar}) is the solution of the $\dbar$-problem through solving an integral equation. The integral operator has small $L^\infty$-norm  at  large $t$ allowing the use of Neumann series. The contribution of this $\dbar$-problem is another higher order error term.

\smallskip

The fifth step (Section \ref{sec:large-time})  is to  group together all the previous transformations to  derive the long time asymptotics of the solution of the MKdV equation in Region I,  using the large-$z$ behavior of the RHP solutions.  These five steps above are more or less standard, during the proof of which we mainly follow the outline of \cite{LPS}.

\smallskip

The sixth step is the study of Region II-V. The leading order term in these region are given by a solution to the Painlev\'e II equation and error estimates are obtained from scaling.

\subsection{Acknowledgement}
We would like to thank Professor Jean-Claude Saut for pointing out the references  \cite{S79}, \cite{ST} and  \cite{T69}. We are very grateful to  the anonymous referees whose detailed comments improve the presentation of the paper significantly.

\section{Conjugation}
\label{sec:prep}
We introduce a new matrix-valued function
\begin{equation}
\label{m1}
m^{(1)}(z;x,t) = m(z;x,t) \delta(z)^{-\sigma_3} 
\end{equation}
where $\delta(z)$  solves 
the scalar RHP 
Problem \ref{prob:RH.delta} below:

\begin{problem}
\label{prob:RH.delta}
Given $\pm z_0 \in \bbR$ and $r \in H^{1}(\bbR)$, find a scalar function 
$\delta(z) = \delta(z; z_0)$, analytic for
$z \in \bbC \setminus [-z_0, z_0]$ with the following properties:
\begin{enumerate}
\item		$\delta(z) \rarr 1$ as $z \rarr \infty$,
\item		$\delta(z)$ has continuous boundary values $\delta_\pm(z) =\lim_{\eps \darr 0} \delta(z \pm i\eps)$ for $z \in (-z_0, z_0)$,
\item		$\delta_\pm$ obey the jump relation
			$$ \delta_+(z) = \begin{cases}
											\delta_-(z)  \left(1 - \left| r(z) \right|^2 \right),	&	 z\in (-z_0, z_0)\\
											\delta_-(z), &	z \in \bbR\setminus (-z_0, z_0)
										\end{cases}
			$$
\end{enumerate}
\end{problem}

\begin{lemma}
\label{lemma:delta}
Suppose $r \in H^{1}(\bbR)$ and that $\kappa(s)$ is given by \eqref{kappa}. Then
\begin{itemize}
\item[(i)]		Problem \ref{prob:RH.delta} has the unique solution
\begin{equation}
\label{RH.delta.sol}
\delta(z) = \left( \dfrac{z-z_0}{z+z_0} \right)^{i\kappa} e^{\chi(z)}  
\end{equation}
where $\kappa$ is given by equation \eqref{kappa} and
\begin{equation}
\label{chi}
\chi(z)=\dfrac{1}{2\pi i}\int_{-z_0}^{z_0}\log\left( \dfrac{1-|r(\zeta)|^2}{1-|r(z_0)|^2} \right)\dfrac{d\zeta}{\zeta-z}
\end{equation}
$$ \left( \dfrac{z-z_0}{z+z_0} \right)^{i\kappa}=\exp\left( i\kappa \left( \log\left\vert \dfrac{z-z_0}{z+z_0} \right\vert +i\arg(z-z_0)-i\arg(z+z_0) \right) \right)$$
here we choose the branch of the logarithm with $-\pi  < \arg(z) < \pi$. 
\bigskip
\item[(ii)]
\begin{equation*}
\delta(z) =(\overline{\delta(\zbar)})^{-1}=\overline{\delta(-\zbar)}
\end{equation*}
\bigskip
\item[(iii)]
For $z\in\bbR$, $|\delta_\pm(z)|<\infty$; for $z\in \bbC\setminus\bbR$, $|\delta^{\pm 1}(z)|<\infty$

\bigskip

\item[(iv)]Along any ray of the form $\pm z_0+ e^{i\phi}\bbR^+$ with $0<\phi<\pi$ or $\pi < \phi < 2\pi$, 
				
				$$ 
						 \left| \delta(z) - \left( \dfrac{z-z_0}{z+z_0} \right)^{i\kappa} e^{\chi(\pm z_0)}  \right| 
						 		\leq C_r
						 |z \mp z_0|^{1/2}.$$
				The implied constant depends on $r$ through its $H^{1}(\bbR)$-norm  
				and is independent of $\pm z_0\in \bbR$.

\end{itemize}
\end{lemma}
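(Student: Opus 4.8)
The plan is to dispatch the four assertions in turn, obtaining (i)--(iii) from the standard scalar Riemann--Hilbert calculus and concentrating the real work on the H\"older estimate (iv). For part (i), since the jump in Problem \ref{prob:RH.delta} is scalar and multiplicative, I take logarithms and solve by the Plemelj formula. Setting $\beta(z) = \frac{1}{2\pi i}\int_{-z_0}^{z_0}\frac{\log(1-|r(s)|^2)}{s-z}\,ds$, the Sokhotski--Plemelj relations give $\beta_+-\beta_- = \log(1-|r|^2)$ on $(-z_0,z_0)$ and $0$ elsewhere, so $\delta := e^{\beta}$ has the prescribed jump and $\delta\rarr 1$. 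To reach the product form \eqref{RH.delta.sol} I split $\log(1-|r(s)|^2)$ into $\log(1-|r(s)|^2)-\log(1-|r(z_0)|^2)$ plus the constant $\log(1-|r(z_0)|^2)$; the constant piece integrates to $i\kappa\log\frac{z-z_0}{z+z_0}$ after using $\int_{-z_0}^{z_0}\frac{ds}{s-z}=\log\frac{z-z_0}{z+z_0}$ and the definition \eqref{kappa}, while the remaining piece is exactly $\chi$. Uniqueness is Liouville: the ratio of two solutions is analytic across $(-z_0,z_0)$ (the jumps cancel), has removable singularities at $\pm z_0$, and tends to $1$, hence is identically $1$.

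For part (ii) I read the symmetries directly off the integral representation. Since $\log(1-|r(s)|^2)$ is real on $\bbR$, conjugation gives $\overline{\beta(\zbar)}=-\beta(z)$, which exponentiates to $\delta(z)=(\overline{\delta(\zbar)})^{-1}$; the substitution $s\mapsto -s$ combined with the evenness $|r(-s)|=|r(s)|$, a consequence of the symmetry relations \eqref{symmetry}, gives $\overline{\beta(-\zbar)}=\beta(z)$, i.e.\ $\delta(z)=\overline{\delta(-\zbar)}$. For part (iii) the singular factor satisfies $\bigl|\bigl(\tfrac{z-z_0}{z+z_0}\bigr)^{i\kappa}\bigr| = e^{-\kappa\,\arg(\frac{z-z_0}{z+z_0})}$, which is bounded because the argument is bounded; so it suffices to bound $\Real\chi$. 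Here I invoke the Sobolev embedding $H^{1}(\bbR)\hookrightarrow C^{1/2}(\bbR)$ and the defocussing bound $\norm{r}{\infty}<1$ to conclude that $g(s):=\log\frac{1-|r(s)|^2}{1-|r(z_0)|^2}$ is H\"older-$1/2$ and, crucially, vanishes at \emph{both} endpoints $\pm z_0$ (again by evenness of $|r|$); a H\"older density vanishing at the endpoints has a bounded Cauchy integral, so $\chi$ and therefore $\delta^{\pm1}$ are bounded.

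The heart of the matter is part (iv). Factoring out the common singular factor $\bigl(\tfrac{z-z_0}{z+z_0}\bigr)^{i\kappa}$, whose modulus is bounded, reduces the claim to $|e^{\chi(z)}-e^{\chi(\pm z_0)}|\lesssim|z\mp z_0|^{1/2}$, and since $\Real\chi$ is bounded and $|e^{a}-e^{b}|\le|a-b|\,e^{\max(\Real a,\Real b)}$, this further reduces to the H\"older bound $|\chi(z)-\chi(\pm z_0)|\lesssim|z\mp z_0|^{1/2}$. By the reflection symmetry established in (ii) it is enough to treat the endpoint $+z_0$. Writing $\chi(z)-\chi(z_0)=\frac{z-z_0}{2\pi i}\int_{-z_0}^{z_0}\frac{g(s)}{(s-z)(s-z_0)}\,ds$ and using $|g(s)|\le\norm{g}{C^{1/2}}\,|s-z_0|^{1/2}$ (valid since $g(z_0)=0$), the estimate follows from the scaling bound $\int_{-z_0}^{z_0}\frac{ds}{|s-z|\,|s-z_0|^{1/2}}\lesssim|z-z_0|^{-1/2}$, which I obtain by rescaling $s-z_0=|z-z_0|\,v$ along the ray $z=z_0+e^{i\phi}|z-z_0|$ and checking that the resulting $v$-integral converges.

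The step I expect to be the main obstacle is exactly this last scaling estimate, for two reasons. First, the bound must be \emph{uniform in the stationary point} $z_0$, which is precisely where I need the H\"older seminorm $\norm{g}{C^{1/2}}$ to be controlled by $\norm{r}{H^{1}(\bbR)}$ uniformly rather than pointwise. Second, the contour of integration terminates at the endpoint being approached, so the naive Cauchy estimate is divergent; it is the vanishing of $g$ at that endpoint, forced by the evenness of $|r|$, that supplies the extra $|s-z_0|^{1/2}$ and upgrades the divergent bound to the sharp H\"older-$1/2$ rate. I will record the mild uniformity in the ray angle $\phi$ (away from the tangential directions, as is the case for the deformed contours used later) needed to keep the rescaled integral bounded.
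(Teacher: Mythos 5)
Your proposal is correct, and for parts (i)--(iii) it follows the same standard route that the paper simply delegates to Deift--Zhou \cite{DZ93} (Plemelj plus Liouville for existence and uniqueness, symmetries read off the integral representation, boundedness of $\Real\chi$ from the H\"older density vanishing at the endpoints). For part (iv) the overall reduction is also the paper's: both arguments bound the oscillatory factor $\left|\left(\frac{z-z_0}{z+z_0}\right)^{i\kappa}\right|$ by $e^{\pi\kappa}$ and reduce to $|e^{\chi(z)}-e^{\chi(\pm z_0)}|\lesssim|z\mp z_0|^{1/2}$ via the elementary inequality $|e^{a}-e^{b}|\le|a-b|\,e^{\max(\Real a,\Real b)}$. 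The difference is that the paper then cites the key H\"older-$1/2$ estimate for the Cauchy integral from \cite[Lemma 23]{BDT88}, whereas you open that black box: the identity $\chi(z)-\chi(z_0)=\frac{z-z_0}{2\pi i}\int_{-z_0}^{z_0}\frac{g(s)}{(s-z)(s-z_0)}\,ds$, the bound $|g(s)|\le\norm{g}{C^{1/2}}|s-z_0|^{1/2}$ coming from $g(z_0)=0$, and the rescaled integral $\int_0^\infty\frac{dw}{(w+d)\sqrt{w}}=\pi d^{-1/2}$ (valid uniformly in $z_0$ once $|s-z|\gtrsim_\phi |s-z_0|+|z-z_0|$, which holds precisely because the rays are non-tangential) together give the sharp rate with a constant controlled by $\norm{r'}{L^2}$ and $1-\norm{r}{\infty}^2$. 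This is a genuinely self-contained substitute for the citation and correctly isolates where uniformity in $z_0$ and in the ray angle enters. Two small points: the evenness $|r(-s)|=|r(s)|$, which you need for $g(-z_0)=0$ and for the symmetry $\delta(z)=\overline{\delta(-\zbar)}$, does not follow from the conjugation symmetries \eqref{symmetry} alone but from the additional relation $r(-z)=\overline{r(z)}$ enjoyed by real MKdV potentials (the paper uses this implicitly as well); and, as in the paper, your constants depend on $r$ not only through $\norm{r}{H^1}$ but also through the distance of $\norm{r}{\infty}$ from $1$, which is finite in the defocussing case since $|r|<1$ pointwise and $r$ vanishes at infinity.
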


\begin{proof}
The proofs of (i)-(iii) can be found in  \cite{DZ93}.  To establish (iv), we first note that 
$$ \left\vert \left( \dfrac{z-z_0}{z+z_0} \right)^{i\kappa}\right\vert \leq e^{\pi \kappa}.$$
To bound the difference $e^{\chi(z)}-e^{\chi(\pm z_0)}$, notice that
\begin{align*}
\left\vert e^{\chi(z)}-e^{\chi(\pm z_0)}\right\vert &\leq\left\vert e^{\chi(\pm z_0)}\right\vert 
\left\vert e^{\chi(z)-\chi(\pm z_0)}-1 \right\vert\\
     &\lesssim \left\vert \int_0^1 \dfrac{d}{ds} e^{s(  \chi(z)-\chi(\pm z_0) )ds } \right\vert\\
     &\lesssim  |z \mp z_0|^{1/2} \sup_{0\leq s\leq 1} \left\vert e^{s(  \chi(z)-\chi(\pm z_0) )}\right\vert\\
     &\lesssim |z \mp z_0|^{1/2}
    \end{align*}
    where the third inequality follows from \cite[Lemma 23]{BDT88}.
\end{proof}

It is straightforward to check that if $m(z;x,t)$ solves Problem \ref{prob:DNLS.RH0}, then the new matrix-valued function $m^{(1)}(z;x,t)=m(z;x,t)\delta(z)^{\sigma_3}$ is the solution to the  following RHP.  

\begin{problem}
\label{prob:DNLS.RHP1}
Given $r \in H^{1,0}(\bbR)$, find a matrix-valued function $m^{(1)}(z;x,t)$ on $\bbC \setminus \bbR$ with the following properties:
\begin{enumerate}
\item		$m^{(1)}(z;x,t) \rarr I$ as $|z| \rarr \infty$,
\item		$m^{(1)}(z;x,t)$ is analytic for $z \in  \bbC \setminus \bbR$
			with continuous boundary values
			$$m^{(1)}_\pm(z;x,t) 
				= \lim_{\eps \darr 0} m^{(1)}(z+i\eps;x,t).$$
\item		The jump relation $$m^{(1)}_+(z;x,t)=m^{(1)}_-(z;x,t)	
			e^{-i\theta\ad\sigma_3}v^{(1)}(z)$$
			 holds,
			 where $$v^{(1)}(z) = \delta_-(z)^{\sigma_3} v(z) \delta_+(z)^{-\sigma_3}.$$
			 \noindent
			The jump matrix $e^{-i\theta\ad\sigma_3} v^{(1)} $ is factorized as 
			\begin{align}
			\label{DNLS.V1}
			e^{-i\theta\ad\sigma_3}v^{(1)}(z)	=
			\begin{cases}
					\Twomat{1}{0}{\dfrac{\delta_-^{-2}  r}{1-|r|^2}  e^{2i\theta}}{1}
					\Twomat{1}{-\dfrac{\delta_+^2 \rbar}{1- |r|^2} e^{-2i\theta}}{0}{1},
						& z \in (-z_0, z_0),\\
						\\
						\Twomat{1}{-\rbar \delta^2 e^{-2i\theta}}{0}{1}
					\Twomat{1}{0}{r \delta^{-2} e^{2i\theta}}{1},
						& z \in(-\infty, -z_0)\cup (z_0,\infty) .
			\end{cases}
			\end{align}
		
\end{enumerate}
\end{problem}

\section{Contour deformation}
\label{sec:mixed}

We now perform contour deformation on Problem \ref{prob:DNLS.RHP1}, following the standard procedure outlined in \cite[Section 4]{LPS}.
Since the phase function \eqref{mkdv.phase} has two critical points
at $\pm z_0$, our new contour is chosen to be
\begin{equation}
\label{new-contour}
\Sigma^{(2)} = \Sigma_1 \cup \Sigma_2 \cup \Sigma_3 \cup \Sigma_4\cup  \Sigma_5 \cup \Sigma_6 \cup \Sigma_7 \cup \Sigma_8
\end{equation}
shown in Figure \ref{new-contour} and consists of rays of the form $\pm z_0+ e^{i\phi}\bbR^+$
where $\phi = \pi/4, 3\pi/4,5\pi/4, 7\pi/4$. 

\begin{figure}[H]
\caption{Deformation from $\mathbb{R}$ to $\Sigma^{(2)}$}
\vskip 15pt
\begin{tikzpicture}[scale=0.75]
\draw[dashed] 					(0, 2) -- (0,-2);	
\draw[dashed] 					(-6,0) -- (6,0);								
\draw[thick]		(2,0) -- (4,2);								
\draw[->,thick,>=stealth] 		(5, 3) -- (4,2);
\draw[thick] 	(-5,3) -- (-4,2);							
\draw[->,thick,>=stealth]  	(-2,0) -- (-4,2);
\draw[->,thick,>=stealth]		(-5,-3) -- (-4,-2);							
\draw[thick]						(-4,-2) -- (-2,0);
\draw[thick,->,>=stealth]		(2,0) -- (4,-2);								
\draw[thick]						(4,-2) -- (5,-3);
\draw[thick]	(-2,0) -- (-1,1);								
\draw[thick,->,>=stealth] (0,2) -- (-1, 1);
\draw[thick,->,>=stealth]		(-2,0) -- (-1,-1);								
\draw[thick]						(-1,-1) -- (0, -2);
\draw[thick]			(0,2) -- (1,1);								
\draw[thick,->,>=stealth]	(2,0) -- (1, 1);
\draw[thick,->,>=stealth]		(0,-2) -- (1,-1);								
\draw[thick]						(1,-1) -- (2, -0);
\draw	[fill]							(-2,0)		circle[radius=0.1];	
\draw	[fill]							(2,0)		circle[radius=0.1];
\draw							(0,0)		circle[radius=0.1];
\node[below] at (-2,-0.1)			{$-z_0$};
\node[below] at (2,-0.1)			{$z_0$};
\node[right] at (5,3)					{$\Sigma_1$};
\node[left] at (-5,3)					{$\Sigma_2$};
\node[left] at (-5,-3)					{$\Sigma_3$};
\node[right] at (5,-3)				{$\Sigma_4$};
\node[left] at (-1,1.2)					{$\Sigma_5$};
\node[left] at (-1,-1.2)					{$\Sigma_7$};
\node[right] at (1,1.2)					{$\Sigma_6$};
\node[right] at (1,-1.2)					{$\Sigma_8$};
\node[right] at (3.5,1)				{$\Omega_1$};
\node[above] at (0,2)			{$\Omega_2$};
\node[left] at (-3.5,1)				{$\Omega_3$};
\node[left] at (-3.5,-1)				{$\Omega_4$};
\node[below] at (0,-2)			{$\Omega_5$};
\node[right] at (3.5,-1)				{$\Omega_6$};
\node[above] at (0.8, 0.2)			{$\Omega_7^+$};
\node[below] at (0.8,-0.2)				{$\Omega_8^+$};
\node[above] at (-0.8, 0.2)			{$\Omega_7^-$};
\node[below] at (-0.8,-0.2)				{$\Omega_8^-$};
\end{tikzpicture}
\label{fig:contour-2}
\end{figure}

We now introduce another matrix-valued function $m^{(2)}$:
$$ m^{(2)}(z) = m^{(1)}(z)  \calR^{(2)}(z). $$
Here $\calR^{(2)}$ is chosen to remove the jump on the real axis and brings about new analytic jump matrices with the desired exponential decay 
along the contour $\Sigma^{(2)}$. Straight forward computation gives
\begin{align*}
m^{(2)}_+	&=m^{(1)}_+ \calR^{(2)}_+ \\
				&= m^{(1)}_- \left( e^{-i\theta\ad\sigma_3} v^{(1)} \right) \calR^{(2)}_+ \\
				&= m^{(2)}_- \left(\calR^{(2)}_-\right)^{-1}
						\left( e^{-i\theta\ad\sigma_3} v^{(1)} \right) \calR^{(2)}_+.
\end{align*}
We want to make sure that the following condition is satisfied
$$ 
(\calR^{(2)}_-)^{-1} \left( e^{-i\theta\ad\sigma_3} v^{(1)} \right) \calR^{(2)}_+ = I
$$
where $\calR_\pm^{(2)}$ are the boundary values of $\calR^{(2)}(z)$ as $\pm \Imag(z) \darr 0$. In this case the jump matrix associated to $m^{(2)}_\pm$ will be the identity matrix on $\bbR$ .

From the signature table \cite[Figure 0.1]{DZ93} we find that the function $e^{2i\theta}$ is exponentially decreasing on $\Sigma_3$  $\Sigma_4$, $\Sigma_5$, $\Sigma_6$  and increasing on $\Sigma_1$, $\Sigma_2$, $\Sigma_7$, $\Sigma_8$ away from the stationary point while the reverse is true of $e^{-2i\theta}$. 
Letting
\begin{equation} \label{eta}
\eta(z; z_0) = \left( \dfrac{ z-z_0}{z+z_0} \right)^{i\kappa}, 
\end{equation}
we define $\calR^{(2)}$ as follows (Figure \ref{fig R-2+}-\ref{fig R-2-}): 
 the functions $R_1$, $R_3$, $R_4$, $R_6$, $R_7^+$, $R_8^+$, $R_7^-$, $R_{8}^-$ satisfy 
\begin{align}
\label{R1}
R_1(z)	&=	\begin{cases}
						-{r(z)} \delta(z)^{-2}			
								&	z \in (z_0,\infty)\\[10pt]
						-{r(z_0 )} e^{-2\chi(z_0)} \eta(z; z_0)^{-2}
								&	z	\in \Sigma_1,
					\end{cases}\\[10pt]
\label{R3}
R_3(z)	&=	\begin{cases}
						-{r(z)} \delta(z)^{-2}		
								&	z \in (-\infty, -z_0)\\[10pt]
						-{r(-z_0 )} e^{-2\chi(-z_0)} \eta(z; z_0)^{-2}
								&	z	\in \Sigma_2,
					\end{cases}\\[10pt]
\label{R4}
R_4(z)	&=	\begin{cases}
						-\overline{r(z)} \delta(z)^{2}			
								&	z \in (-\infty, -z_0)\\[10pt]
						-\overline{r(-z_0)} e^{2\chi(-z_0)} \eta(z; z_0)^{2}
								&	z	\in \Sigma_3,
					\end{cases}\\[10pt]
\label{R6}
R_6(z)	&=	\begin{cases}
						-\overline{r(z)} \delta(z)^{2}			
								&	z \in (-\infty, -z_0)\\[10pt]
						-\overline{r(z_0)} e^{2\chi(z_0)} \eta(z; z_0)^{2}
								&	z	\in \Sigma_4,
					\end{cases}
\end{align}

\begin{align}
\label{R7+}
R_7^+(z)	&=	\begin{cases}
						\dfrac{\delta_-^{-2}(z)r(z)}{1-|r(z)|^2}		
								& z \in (-z_0, z_0)\\[10pt]
					\dfrac{e^{-2\chi(z_0)} \eta(z; z_0)^{-2} r(z_0)}{1-|r(z_0)|^2} \quad
                                  & z \in \Sigma_6,
					\end{cases}
					\\[10pt]
\label{R8+}
R_8^+(z)	&=	\begin{cases}
						\dfrac{\delta_+^{2}(z)  \overline{r(z)} }{1-|r(z)|^2}		
								& z \in (-z_0, z_0)\\[10pt]
					{\dfrac{e^{2\chi(z_0)} \eta(z; z_0)^{2} \overline{r(z_0)} }{1-|r(z_0)|^2}}
						& z \in \Sigma_8,
					\end{cases}
					\\[10pt]
\label{R7-}
R_7^-(z)	&=	\begin{cases}
						\dfrac{\delta_-^{-2}(z)r(z)}{1-|r(z)|^2}		
								& z \in (-z_0, z_0)\\[10pt]
					\dfrac{e^{-2\chi(-z_0)} \eta(z; z_0)^{-2} r(-z_0)}{1-|r(-z_0)|^2} \quad
                                  & z \in \Sigma_5,
					\end{cases}
					\\[10pt]
\label{R8-}
R_{8}^-(z)	&=	\begin{cases}
						\dfrac{\delta_+^{2}(z)  \overline{r(z)} }{1-|r(z)|^2}		
								& z \in (-z_0, z_0)\\[10pt]
					{\dfrac{e^{2\chi(-z_0)} \eta(z; z_0)^{2} \overline{r(-z_0)} }{1-|r(-z_0)|^2}}
						& z \in \Sigma_7.
					\end{cases}
\end{align}

{
\SixMatrix{The Matrix  $\calR^{(2)}$ for Region I, near $z_0$}{fig R-2+}
	{\twomat{1}{0}{R_1 e^{2i\theta}}{1}}
	{\twomat{1}{R_7^+ e^{-2i\theta}}{0}{1}}
	{\twomat{1}{0}{R_8^+ e^{2i\theta}}{1}}
	{\twomat{1}{R_6 e^{-2i\theta}}{0}{1}}
}

{
\sixmatrix{The Matrix  $\calR^{(2)}$ for Region I, near $-z_0$}{fig R-2-}
	{\twomat{1}{R_{7}^- e^{-2i\theta}}{0}{1}}
	{\twomat{1}{0}{R_3e^{2i\theta}}{1}}
	{\twomat{1}{R_4e^{-2i\theta}}{0}{1}}
	{\twomat{1}{0} {R_{8}^- e^{2i\theta}}{1}}
}

Each $R_i(z)$ in $\Omega_i$ is constructed in such a way that the jump matrices on the contour and $\dbar R_i(z)$ along with along with their relevant exponentials enjoys the property of exponential decay as $t\to \infty$.
We formulate Problem \ref{prob:DNLS.RHP1} into a mixed RHP-$\dbar$ problem. In the following sections we will separate this mixed problem into a localized RHP and a pure $\dbar$ problem whose long-time contribution to the asymptotics of $u(x,t)$ is of higher order than the leading term.

The following lemma (\cite[Proposition 2.1]{DM08}) will be used in the error estimates of 
$\bar \partial$-problem in Section \ref{sec:dbar}.

We first denote the entries that appear in \eqref{R1}--\eqref{R8-} by
\begin{align*}
p_1(z)=p_3(z)	&=	-r(z).	&
p_4(z)=p_6(z)	&=	- \overline{r(z)},&\\
p_{7^-}(z)=p_{7^+}(z)	&=	\dfrac{r(z)}{1-|r(z)|^2},& p_{8^-}(z)=p_{8^+}(z)	&= \dfrac{ \overline{r(z)}}{1 -|r(z)|^2}.
\end{align*}

\begin{lemma}
\label{lemma:dbar.Ri}
Suppose $r \in H^{1}(\bbR)$. There exist functions $R_i$ on $\Omega_i$, $i=1,3,4,6,7^\pm,8^\pm$ satisfying \eqref{R1}--\eqref{R8-}, so that
$$ 
|\dbar R_i(z)| \lesssim
	 |p_i'(\Real(z))| + |z-\xi|^{-1/2} , 	
				z \in \Omega_i
			$$ 
where $\xi=\pm z_0$  and the implied constants are uniform for $r $ in a bounded subset of $H^{1}(\bbR)$.
\end{lemma}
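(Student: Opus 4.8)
The plan is to construct each $R_i$ explicitly as an interpolation between its prescribed value on the real axis and its prescribed value on the ray $\Sigma_i$, and then to estimate $\dbar R_i$ directly. By the evident symmetry of the definitions \eqref{R1}--\eqref{R8-}, it suffices to carry this out in one representative sector, say $\Omega_1$ (vertex $\xi=z_0$, opening $0<\phi<\pi/4$); the sectors $\Omega_3,\Omega_4,\Omega_6$ are identical after relabelling $\xi=\pm z_0$, and the inner sectors $\Omega_7^\pm,\Omega_8^\pm$ are treated the same way, using that $1-|r|^2$ is bounded below in the defocussing case so that $p_{7^\pm},p_{8^\pm}$ are again H\"older--$1/2$, and that $\delta^{\mp2}$ is analytic and bounded in each open sector. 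My first move is to peel off the analytic factor: since $\eta(z;z_0)^{\mp2}$ is analytic in the open sector and uniformly bounded there by $e^{2\pi\kappa}$ (the computation behind Lemma \ref{lemma:delta}(iv)), writing $R_1=G_1\,\eta(z;z_0)^{-2}$ reduces matters to estimating $\dbar G_1$, where $G_1$ is to be an extension of $g_1:=p_1\,e^{-2\chi}$ off $(z_0,\infty)$ and $\delta^{-2}=\eta^{-2}e^{-2\chi}$.

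In polar coordinates $z=z_0+\rho e^{i\phi}$ I would set
\[
G_1(z)=g_1(z_0)+\bigl(g_1(\Real z)-g_1(z_0)\bigr)\cos(2\phi),
\]
inserting, if one wishes, a fixed smooth cutoff $\equiv 1$ near $z_0$ to fix the behaviour for large $|z|$, which does not affect the bound below. The boundary values are then immediate: at $\phi=0$ one has $\Real z=z_0+\rho$ and $\cos 2\phi=1$, so $R_1=g_1\,\eta^{-2}=-r\,\delta^{-2}$ as required on $(z_0,\infty)$; at $\phi=\pi/4$ one has $\cos 2\phi=0$, so $R_1=g_1(z_0)\,\eta^{-2}=-r(z_0)e^{-2\chi(z_0)}\eta^{-2}$ as required on $\Sigma_1$.

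The estimate follows from the polar form $\dbar=\tfrac12 e^{i\phi}(\partial_\rho+\tfrac{i}{\rho}\partial_\phi)$. The $\partial_\rho$ part of $\dbar G_1$ produces $g_1'(\Real z)$ times bounded trigonometric factors, and the $\tfrac1\rho\partial_\phi$ part produces $\tfrac1\rho\bigl(g_1(\Real z)-g_1(z_0)\bigr)$ times bounded factors. For the first, I write $g_1'=p_1'\,e^{-2\chi}-2\chi' g_1$; since $e^{-2\chi}$ and $g_1$ are bounded, it remains to control $\chi'$, and I claim $|\chi'(u)|\lesssim|u-z_0|^{-1/2}$ for real $u>z_0$. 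This follows from \eqref{chi} by differentiating under the integral and estimating the Cauchy integral of the H\"older--$1/2$ density $\log\frac{1-|r|^2}{1-|r(z_0)|^2}$, which vanishes at $z_0$; since $\Real z-z_0=\rho\cos\phi\ge\rho/\sqrt2$ on $\Omega_1$ this gives $\lesssim|z-z_0|^{-1/2}$. For the second, I use that $g_1$ is H\"older--$1/2$: $p_1$ is H\"older--$1/2$ because $H^1(\bbR)\hookrightarrow C^{1/2}(\bbR)$, giving $|r(x)-r(y)|\le\norm{r'}{L^2}|x-y|^{1/2}$, and $e^{-2\chi}$ is H\"older--$1/2$ by Lemma \ref{lemma:delta}(iv); hence $|g_1(\Real z)-g_1(z_0)|\lesssim|\Real z-z_0|^{1/2}\le|z-z_0|^{1/2}$, and dividing by $\rho=|z-z_0|$ gives $\lesssim|z-z_0|^{-1/2}$. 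Collecting both parts yields $|\dbar R_1|\lesssim|p_1'(\Real z)|+|z-z_0|^{-1/2}$, and every constant (the H\"older seminorm of $r$, the lower bound on $1-|r|^2$, and hence $\kappa$ and $\chi$) is controlled by $\norm{r}{H^1(\bbR)}$, giving the asserted uniformity.

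The step I expect to be the \emph{crux} is the endpoint bound $|\chi'(u)|\lesssim|u-z_0|^{-1/2}$, together with the matching $\tfrac1\rho$-loss in the interpolation term: both reflect that $H^1$ data are only H\"older--$1/2$, not Lipschitz, so these quantities genuinely blow up like $|z-\xi|^{-1/2}$ at the stationary points rather than staying bounded. This is harmless here precisely because $|z-\xi|^{-1/2}$ is locally area-integrable, which is exactly what is needed when the bound is fed into the solid Cauchy transform governing the $\dbar$-problem of Section \ref{sec:dbar}; quantifying this non-Lipschitz loss is the one place where the weak ($H^1$) regularity hypothesis is actually exploited.
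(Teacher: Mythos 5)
Your construction is correct, and the overall strategy --- an explicit angular interpolation between the two prescribed boundary values, followed by the polar form of $\dbar$ and the H\"older-$\frac12$ embedding $H^{1}(\bbR)\hookrightarrow C^{1/2}(\bbR)$ --- is the same as the paper's. The one genuine difference is where you place the non-analyticity. The paper interpolates (with a smooth cutoff $\mathcal{K}(\phi)$ in place of your $\cos 2\phi$, an immaterial choice) between $f_1(z)=p_1(z_0)e^{-2\chi(z_0)}\eta(z;z_0)^{-2}\delta(z)^{2}$ and $p_1(\Real z)$, both multiplied by the factor $\delta(z)^{-2}$, which is analytic in the open sector; with that arrangement the only non-analytic ingredients are $p_1(\Real z)$ and $\mathcal{K}(\phi)$, so $\dbar$ never falls on $\chi$, and the two terms of the bound come directly from $p_1'(\Real z)$ and from $\left|p_1(\Real z)-f_1(z)\right|\lesssim |z-z_0|^{1/2}$ (H\"older continuity of $r$ together with Lemma \ref{lemma:delta}(iv)). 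You instead extend the full real-axis boundary value $g_1=p_1e^{-2\chi}$ as a function of $\Real z$, which forces you to differentiate $\chi$ along $(z_0,\infty)$ and to supply the extra endpoint estimate $|\chi'(u)|\lesssim |u-z_0|^{-1/2}$. That estimate is true: writing $\chi'(u)=\frac{1}{2\pi i}\int_{-z_0}^{z_0}h(\zeta)(\zeta-u)^{-2}\,d\zeta$ with $h(\zeta)=\log\frac{1-|r(\zeta)|^2}{1-|r(z_0)|^2}$, so that $|h(\zeta)|\lesssim|\zeta-z_0|^{1/2}$, one gets $\int_0^\infty s^{1/2}(s+d)^{-2}\,ds\simeq d^{-1/2}$ with $d=u-z_0$, and $\Real z-z_0\geq \rho/\sqrt{2}$ converts this into $|z-z_0|^{-1/2}$. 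So your argument closes, at the cost of one additional lemma that the paper's choice of $f_1$ is engineered precisely to avoid; both routes yield the stated bound with constants controlled by $\norm{r}{H^{1}(\bbR)}$ (and, in both, implicitly by a lower bound on $1-\sup|r|^2$).
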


\begin{proof}
We only prove the lemma for $R_1$. Define $f_1(z)$ on $\Omega_1$ by
$$ f_1(z) = p_1(z_0) e^{-2\chi(z_0)} \eta(z; z_0)^{-2} \delta(z)^{2} $$
and let
\begin{equation}
\label{interpol}
\ R_1(z) = \left( f_1(z) + \left[ p_1(\Real(z)) - f_1(z) \right] \mathcal{K}(\phi) \right) \delta(z)^{-2} 
\end{equation}
where $\phi = \arg (z-\xi)$ and $\mathcal{K}$ is a smooth function on $(0, \pi/4)$ with
\begin{equation}
\label{cal-K}
\mathcal{K}(\phi)=
	\begin{cases}
			1
			&	z\in [0, \pi/12], \\
			0
			&	z \in[\pi/6, \pi/4]
	\end{cases}
\end{equation}

 It is easy to see that $R_1$ as constructed has the boundary values \eqref{R1}.
Writing $z-z_0= \rho e^{i\phi}$, we have
$$ \dbar = \frac{1}{2}\left( \frac{\dee}{\dee x} + i \frac{\dee}{\dee y} \right)
			=	\frac{1}{2} e^{i\phi} \left( \frac{\dee}{\dee \rho} + \frac{i}{\rho} \frac{\dee}{\dee \phi} \right).
$$
We calculate
$$ 
\dbar R_1 (z) =  \frac{1}{2}  p_1'(\Real z) \mathcal{K}(\phi)  ~ \delta(z)^{-2} -
		\left[ p_1(\Real z) - f_1(z) \right]\delta(z)^{-2}  \frac{ie^{i\phi}}{|z-\xi|}  \mathcal{K}'(\phi) . 
$$
It follows from Lemma \ref{lemma:delta} (iv) that
$$ 
 \left|\left( \dbar R_1 \right)(z)  \right| \lesssim
|p_1'(\Real z)| + |z-\xi|^{-1/2}
$$
where the implied constants depend on $\norm{r}{H^{1}}$ and the cutoff function $\mathcal{K}$. 
The estimates in the remaining sectors are identical.
\end{proof}

The unknown $m^{(2)}$ satisfies a mixed $\dbar$-RHP. We first identify the jumps of $m^{(2)}$ along the contour $\Sigma^{(2)}$. Recall that $m^{(1)}$ is analytic along the contour,  the jumps are determined entirely by 
$\mathcal{R}^{(2)}$, see \eqref{R1}--\eqref{R8-}. Away from $\Sigma^{(2)}$, using the triangularity of $\mathcal{R}^{(2)}$, we   have that 
\begin{equation}
\label{N2.dbar}
 \dbar m^{(2)} = m^{(2)} \left( \calR^{(2)} \right)^{-1} \dbar \calR^{(2)} = m^{(2)} \dbar \calR^{(2)} 
 \end{equation}
 
 \begin{remark}
Note that the interpolation defined through \eqref{interpol} introduce new jump on $\Sigma^{'(2)}_9$ of Figure \ref{fig:contour} with jump matrix given by 
\begin{equation}
\label{v_9}
v_9(z)=\begin{cases} I, & z\in \left(-iz_0 \tan(\pi/12), iz_0 \tan(\pi/12) \right)\\
\\
        \unitupper{ (R_7^--R_7^+)e^{-2i\theta} },  & z\in \left(iz_0 \tan(\pi/12), iz_0 \right) \\
        \\
         \unitlower{ (R_8^--R_8^+)e^{2i\theta} },  & z\in \left(-iz_0 , -iz_0 \tan(\pi/12), \right). 
\end{cases}
\end{equation}
But $v_9$ is exponentially small due to the construction of $\mathcal{K}(\phi)$ in \eqref{cal-K}.
\end{remark}
\begin{figure}[H]
\caption{$\Sigma'^{(2)}$}
\vskip 15pt
\begin{tikzpicture}[scale=0.65]
\draw	[->,thick,>=stealth] 	(2,0) -- (4,2);								
\draw	[thick]	(5, 3) -- (4,2);
\draw  [->,thick,>=stealth] 	(-5,3) -- (-4,2);							
\draw [thick]	(-2,0) -- (-4,2);
\draw[->,thick,>=stealth]		(-5,-3) -- (-4,-2);							
\draw[thick]						(-4,-2) -- (-2,0);
\draw[thick,->,>=stealth]		(2,0) -- (4,-2);								
\draw[thick]						(4,-2) -- (5,-3);
\draw[thick,->,>=stealth]	(-2,0) -- (-1,1);								
\draw  [thick]    (0,2) -- (-1, 1);
\draw[thick,->,>=stealth]		(-2,0) -- (-1,-1);								
\draw[thick]						(-1,-1) -- (0, -2);
\draw	[thick,->,>=stealth]		(0,2) -- (1,1);								
\draw	[thick]  (2,0) -- (1, 1);
\draw[thick,->,>=stealth]		(0,-2) -- (1,-1);								
\draw[thick]						(1,-1) -- (2, -0);
\draw	[fill]							(-2,0)		circle[radius=0.1];	
\draw	[fill]							(2,0)		circle[radius=0.1];
\draw[->,thick,>=stealth] 		(0, -2) -- (0,0);
\draw[thick]			(0,0) -- (0,2);		
\node[below] at (-2,-0.1)			{$-z_0$};
\node[below] at (2,-0.1)			{$z_0$};
\node[right] at (5,3)					{$\Sigma'^{(2)}_1$};
\node[left] at (-5,3)					{$\Sigma'^{(2)}_2$};
\node[left] at (-5,-3)					{$\Sigma'^{(2)}_3$};
\node[right] at (5,-3)				{$\Sigma'^{(2)}_4$};
\node[left] at (-1,1.2)					{$\Sigma'^{(2)}_5$};
\node[left] at (-1,-1.2)					{$\Sigma'^{(2)}_7$};
\node[right] at (1,1.2)					{$\Sigma'^{(2)}_6$};
\node[right] at (1,-1.2)					{$\Sigma'^{(2)}_8$};
\node[right] at (0,0)              {$\Sigma'^{(2)}_9$};
\end{tikzpicture}
\label{fig:contour}
\end{figure}
Now we arrive at the following Riemann-Hilbert-$\overline{\partial}$ problem
\begin{problem}
\label{prob:DNLS.RHP.dbar}
Given $r \in H^{1}(\bbR)$, find a matrix-valued function $m^{(2)}(z;x,t)$ on $\bbC \setminus \Sigma^{'(2)} $ with the following properties:
\begin{enumerate}
\item		$m^{(2)}(z;x,t) \rarr I $ as $|z| \rarr \infty$ in $\bbC \setminus \Sigma^{'(2)}$,
\item		$m^{(2)}(z;x,t)$ is continuous for $z \in  \bbC \setminus \Sigma^{'(2)}$
			with continuous boundary values 
			$m^{(2)}_\pm(z;x,t) $
			(where $\pm$ is defined by the orientation in Figure \ref{fig:contour})
\item		The jump relation $m^{(2)}_+(z;x,t)=m^{(2)}_-(z;x,t)	
			e^{-i\theta\ad\sigma}v^{(2)}(z)$ holds, where
			$e^{-i\theta\ad\sigma}v^{(2)}(z)	$ is given in Figure \ref{fig:jumps-1}-\ref{fig:jumps-2} and \eqref{v_9}.
\item		The equation 
			$$
			\dbar m^{(2)} = m^{(2)} \, \dbar \calR^{(2)}
			$$ 
			holds in $\bbC \setminus \Sigma^{'(2)}$, where
			$$
			\dbar \calR^{(2)}=
				\begin{doublecases}
					\Twomat{0}{0}{(\dbar R_1) e^{2i\theta}}{0}, 	& z \in \Omega_1	&&
					\Twomat{0}{(\dbar R_7^+)e^{-2i\theta}}{0}{0}	,	& z \in \Omega_7^+	\\
					\\
					\Twomat{0}{0}{(\dbar R_8^+)e^{2i\theta}}{0},	&	z \in \Omega_8^+	&&
					\Twomat{0}{(\dbar R_6)e^{-2i\theta}}{0}{0}	,	&	z	\in \Omega_6	 \\
					\\
					\Twomat{0}{0}{(\dbar R_3) e^{2i\theta}}{0}, 	& z \in \Omega_3	&&
					\Twomat{0}{(\dbar R_4)e^{-2i\theta}}{0}{0}	,	& z \in \Omega_4	\\
					\\
					\Twomat{0}{0}{(\dbar R_8^-)e^{2i\theta}}{0},	&	z \in \Omega_8^-	&&
					\Twomat{0}{(\dbar R_7^-)e^{-2i\theta}}{0}{0}	,	&	z	\in \Omega_7^- \\
					\\
					0	&\hspace{-5pt} z\in \Omega_2\cup\Omega_5	
				\end{doublecases}
			$$
\end{enumerate}
\end{problem}

The following picture is an illustration of the jump matrices of RHP Problem \ref{prob:DNLS.RHP.dbar}.

\begin{figure}[H]
\caption{Jump Matrices  $v^{(2)}$  for $m^{(2)}$ near $z_0$}
\vskip 15pt
\begin{tikzpicture}[scale=0.75]
\draw[dashed] 				(-6,0) -- (6,0);							
\draw [thick]	(0,0) -- (1.5,1.5);						
\draw[->,thick,>=stealth] (3,3) -- (1.5, 1.5);
\draw [thick]	(-3,3) -- (-1.5,1.5);					
\draw	[->,thick,>=stealth]		(0,0)--(-1.5,1.5);
\draw[->,thick,>=stealth]	(-3,-3) -- (-1.5,-1.5);					
\draw[thick]					(-1.5,-1.5) -- (0,0);
\draw[->,thick,>=stealth]	(0,0) -- (1.5,-1.5);					
\draw[thick]					(1.5,-1.5) -- (3,-3);
\draw[fill]						(0,0)	circle[radius=0.075];		
\node [below] at  			(0,-0.15)		{$z_0$};
\node[right] at					(3.2,3)		{$\unitlower{R_1 e^{2i\theta}}$};
\node[left] at					(-3.2,3)		{$\unitupper{R_7^+ e^{-2i\theta}}$};
\node[left] at					(-3.2,-3)		{$\unitlower{R_8^+ e^{2i\theta}}$};
\node[right] at					(3.2,-3)		{$\unitupper{R_6 e^{-2i\theta}}$};
\node[above] at 				(1.4,1.6)	{$-$};
\node[below] at				(1.7,1.4)	{$+$};
\node[above] at				(-1.4,1.6)	{$-$};
\node[below] at				(-1.7,1.4)	{$+$};
\node[above] at 				(-1.7,-1.4)	{$+$};
\node[below] at				(-1.4,-1.6)	{$-$};
\node[above] at				(1.7,-1.4)	{$+$};
\node[below] at				(1.4,-1.6)	{$-$};
\node[left] at					(2.5,3)		{$\Sigma_1$};
\node[right] at					(-2.5,3)		{$\Sigma_6$};
\node[right] at					(-2.5,-3)		{$\Sigma_8$};
\node[left] at					(2.5,-3)		{$\Sigma_4$};
\end{tikzpicture}
\label{fig:jumps-1}
\end{figure}

\begin{figure}[H]
\caption{Jump Matrices  $v^{(2)}$  for $m^{(2)}$ near $-z_0$}
\vskip 15pt
\begin{tikzpicture}[scale=0.75]
\draw[dashed] 				(-6,0) -- (6,0);							
\draw [thick]	(0,0) -- (1.5,1.5);						
\draw	[->,thick,>=stealth] 	(3,3)--(1.5,1.5);
\draw [thick]		(-3,3) -- (-1.5,1.5);					
\draw[->,thick,>=stealth]		 (0,0)--(-1.5,1.5); 
\draw[->,thick,>=stealth]	(-3,-3) -- (-1.5,-1.5);					
\draw[thick]					(-1.5,-1.5) -- (0,0);
\draw[->,thick,>=stealth]	(0,0) -- (1.5,-1.5);					
\draw[thick]					(1.5,-1.5) -- (3,-3);
\draw[fill]						(0,0)	circle[radius=0.075];		
\node [below] at  			(0,-0.15)		{$-z_0$};
\node[right] at					(3.2,3)		{$\unitupper{R_7^- e^{-2i\theta}}$};
\node[left] at					(-3.2,3)		{$\unitlower{R_3 e^{2i\theta}}$};
\node[left] at					(-3.2,-3)		{$\unitupper{R_4 e^{-2i\theta}}$};
\node[right] at					(3.2,-3)		{$\unitlower{R_{8}^- e^{2i\theta}}$};
\node[above] at 				(1.4,1.6)	{$-$};
\node[below] at				(1.7,1.4)	{$+$};
\node[above] at				(-1.4,1.6)	{$-$};
\node[below] at				(-1.7,1.4)	{$+$};
\node[above] at 				(-1.7,-1.4)	{$+$};
\node[below] at				(-1.4,-1.6)	{$-$};
\node[above] at				(1.7,-1.4)	{$+$};
\node[below] at				(1.4,-1.6)	{$-$};
\node[left] at					(2.5,3)		{$\Sigma_5$};
\node[right] at					(-2.5,3)		{$\Sigma_2$};
\node[right] at					(-2.5,-3)		{$\Sigma_3$};
\node[left] at					(2.5,-3)		{$\Sigma_7$};
\end{tikzpicture}
\label{fig:jumps-2}
\end{figure}

%
%

\section{The Localized Riemann-Hilbert Problem}
\label{sec:local}
We perform the following factorization of $m^{(2)}$:
\begin{equation}
\label{factor-LC}
m^{(2)} = m^{(3)} m^{\RHP}.
\end{equation}
Here we require that $m^{(3)} $ to be the solution of the pure $\dbar$-problem, hence no jump, and $ m^{\RHP}$ solution of the localized  RHP Problem \ref{MKDV.RHP.local} below
with the jump matrix $v^\RHP=v^{(2)}$.  The current section focuses on $ m^{\RHP}$.
\begin{problem}
\label{MKDV.RHP.local}
Find a $2\times 2$ matrix-valued function $m^\RHP(z; x,t)$, analytic on $\bbC \setminus \Sigma'^{(2)}$ (See Figure \ref{fig:contour}) ,
with the following properties:
\begin{enumerate}
\item	$m^\RHP(z;x,t) \rarr I$ as $|z| \rarr \infty$ in $\bbC \setminus \Sigma'^{(2)}$, where $I$ is the $2\times 2$ identity matrix,
\item	$m^\RHP(z; x,t)$ is analytic for $z \in \bbC \setminus \Sigma'^{(2)}$ with continuous boundary values $m^\RHP_\pm$
		on $\Sigma'^{(2)}$,
\item	The jump relation $m^\RHP_+(z;x,t) = m^\RHP_-(z; x,t ) v^\RHP(z)$ holds on $\Sigma'^{(2)}$, where
		\begin{equation*}	
		v^\RHP(z) =	v^{(2)}(z).
		\end{equation*}
\end{enumerate}
\end{problem}

\subsection{construction of the parametrix}

For some fixed $\rho>0$, we define 
\begin{align*}
L_\rho &=\lbrace z: z=z_0+u  e^{3i\pi/4},  \rho \leq u\leq \sqrt{2}z_0 \rbrace\\
           &\cup \lbrace z: z=z_0+u  e^{i\pi/4},  u \geq \rho \rbrace\\
           & \cup \lbrace z: z=-z_0+u  e^{i\pi/4},  \rho \leq u\leq \sqrt{2} z_0 \rbrace\\
           & \cup \lbrace z: z=-z_0+u  e^{3i\pi/4},  u \geq \rho \rbrace\\
           \Sigma'&=\Sigma'^{(2)}\setminus (L_\rho \cup \overline{L_\rho}\cup \Sigma'^{(2)}_9). 
\end{align*}

\begin{figure}[H]
\caption{ $\Sigma'=\Sigma'_{A}\cup \Sigma'_{B}$}
\vskip 15pt
\begin{tikzpicture}[scale=0.7, photon/.style={decorate,decoration={snake,post length=0.8mm}} ]
\pgfmathsetmacro{\c}{2}
    \pgfmathsetmacro{\d}{3.464} 
   \draw [green] [dashed] plot[domain=-1:1] ({\c*cosh(\x)},{\d*sinh(\x)});
    \draw  [green][dashed]  plot[domain=-1:1] ({-\c*cosh(\x)},{\d*sinh(\x)});
\draw[thick]		(2,0) -- (2.4,0.4);								
\draw[thick] (2.4, 0.4)--(2.8, 0.8);
\draw[thick] 					(1.2, 0.8) -- (1.8, 0.2);
\draw[thick] (1.8, 0.2)--(2, 0);
\draw[thick]		(2,0) -- (2.4, -0.4);							
\draw[thick] (2.4, -0.4)--(2.8, -0.8);
\draw[thick] 	(-1.2, 0.8)--(-1.7, 0.3) ; 
\draw[thick] 	(-2, 0)--(-1.7, 0.3);						
\draw[thick] 	(-1.2,- 0.8)--(-1.7, -0.3) ; 
\draw[thick] 	(-2, 0)--(-1.7, -0.3);						
\draw[thick] 					(1.2, 0.8) -- (1.8, 0.2);
\draw[thick] (1.8, 0.2)--(2, 0);
\draw[thick] 					(1.2, -0.8) -- (1.8, -0.2);
\draw[thick] (1.8, -0.2)--(2, 0);
\draw[thick] 					(-2.8, 0.8) -- (-2.2, 0.2);
\draw[thick] (-2.2, 0.2)--(-2, 0);
\draw[thick] 					(-2.8, -0.8) -- (-2.2, -0.2);
\draw[thick] (-2.2, -0.2)--(-2, 0);
\draw	[fill]						(-2,0)		circle[radius=0.1];	
\draw	[fill]					(2,0)		circle[radius=0.1];
\draw		[blue]				(-2,0)		circle[radius=1.131];	
\draw		[blue]					(2,0)		circle[radius=1.131];
\node[below] at (-2,-0.1)			{$-z_0$};
\node[below] at (2,-0.1)			{$z_0$};
\node[left] at (-4.56, 0.565)					{$\Sigma_{A'}$};
\node[right] at (4.56, 0.565)				{$\Sigma_{B'}$};

\draw[->,photon] ( 4.565,0.565) --  (2.565, 0.565); 
\draw[->,photon] ( -4.565,0.565) --  (-2.565, 0.565); 
\draw[->,photon] ( 1, 1.8) --  (2, 1.131); 
\draw[->,photon] ( -1, 1.8) --  (-2, 1.131); 
\node [above] at  ( 1, 1.75)  {$C_B$};
\node [above] at  ( -1, 1.75)  {$C_A$};

\end{tikzpicture}
\label{fig:contour-2}
\end{figure}

\begin{problem}
\label{prob:mkdv.A}
Find a matrix-valued function $m^{A'}(z;x,t)$ on $\bbC \setminus\Sigma_A'$ with the following properties:
\begin{enumerate}
\item		$m^{A'}(z;x,t) \rarr I$ as $ z \rarr \infty$.
\item		$m^{A' }(z;x,t)$ is analytic for $z \in  \bbC \setminus \Sigma_A' $
			with continuous boundary values
			$m^{A'}_\pm(z;x,t)$.
\item On $ \Sigma_A'$ we have the following jump conditions
$$m^{A'}_+(z;x,t)=m^{A'}_-(z;x,t)	
			e^{-i\theta\ad\sigma_3}v^{A'}(z)$$
			where $v^{A'}=v^{(2)}\restriction _{\Sigma_A' }$.
\end{enumerate}
\end{problem}			

\begin{problem}
\label{prob:mkdv.B}
Find a matrix-valued function $m^{B'}(z;x,t)$ on $\bbC \setminus\Sigma_B'$ with the following properties:
\begin{enumerate}
\item		$m^{B'}(z;x,t) \rarr I$ as $ z \rarr \infty$.
\item		$m^{B'}(z;x,t)$ is analytic for $z \in  \bbC \setminus \Sigma_B' $
			with continuous boundary values
			$m^{B’}_\pm(z;x,t)$.
\item On $ \Sigma_B'$ we have the following jump conditions
$$m^{B'}_+(z;x,t)=m^{B’}_-(z;x,t)	
			e^{-i\theta\ad\sigma_3}v^{B'}(z)$$
			where $v^{B’}=v^{(2)}\restriction_{\Sigma_B' }$.
\end{enumerate}
\end{problem}	
To  construct solutions to problems \ref{prob:mkdv.A}-\ref{prob:mkdv.B} we need the following matrix-valued function:
\begin{equation}
\label{eq: para}
\mathcal{P}=\begin{cases}
\twomat{\mathcal{P}_-}{0 }{0}{\mathcal{P}_-^{-1}}, \quad &\left\vert z + z_0 \right\vert<\rho \\
\twomat{\mathcal{P}_+}{0 }{0}{\mathcal{P}_+^{-1}}, \quad & \left\vert z - z_0 \right\vert <\rho \\
\twomat{1}{0}{0}{1},  \quad & \left\vert z \pm z_0 \right\vert \geq \rho
\end{cases}
\end{equation}
where 
\begin{align*}
\mathcal{P}_-&= (192 \tau)^{i \kappa / 2} e^{-8 i \tau} e^{\chi\left(-z_{0}\right)} \eta(z; -z_0)^{-1}(-\zeta_-)^{i\kappa} e^{i \zeta_-^{2} / 4}e^{i\theta} \\
\mathcal{P}_+&= (192 \tau)^{-i \kappa / 2} e^{8 i \tau} e^{\chi\left(z_{0}\right)} \eta(z; z_0)^{-1}\zeta_+^{i\kappa} e^{-i \zeta_+^{2} / 4}e^{i\theta} 
\end{align*}
with $\zeta _\mp =\sqrt{48 z_{0} t}\left(z \pm z_{0}\right)$. Then we further set 
\begin{equation}
\label{c}
 m^\RHP :=  \tilde{m_p}\mathcal{P}^{-1}
\end{equation}
where 
\begin{align}
\tilde{m_p}\restriction  \lbrace  z: \left\vert z + z_0 \right\vert < \rho \rbrace &=m^{A'} \twomat{\mathcal{P}_-}{0 }{0}{\mathcal{P}_-^{-1}}  := m^{A} ,\\
\tilde{m_p}\restriction  \lbrace  z: \left\vert z - z_0 \right\vert < \rho \rbrace &=m^{B'} \twomat{\mathcal{P}_+}{0 }{0}{\mathcal{P}_+^{-1}}  := m^{B}.
\end{align}
Set
\begin{align*}
\delta^0_A(z) &= (192\tau)^{i\kappa/2} e^{-8i\tau} e^{\chi(-z_0)}\\
\delta^0_B(z) &= (192\tau)^{-i\kappa/2} e^{8i\tau} e^{\chi(z_0)}\\
\end{align*}
 Let $\Sigma_A$ and $\Sigma_B$ denote the contours
\begin{equation*}
\lbrace z=u e^{\pm i\pi/4} : -\infty<u<\infty \rbrace
\end{equation*}
with the same orientation as those of $\Sigma_{A'}$ and $\Sigma_{B'}$ respectively. 
\begin{figure}[H]
\caption{$\Sigma_A,\Sigma_B$}
\vskip 15pt
\begin{tikzpicture}[scale=0.6]
\draw[dashed] 				(-4,0) -- (4,0);							
\draw [->,thick,>=stealth] 	(0,0) -- (1.5,1.5);						
\draw  [thick]  (3,3) -- (1.5, 1.5);
\draw 	[->,thick,>=stealth]	  (-3,3) -- (-1.5,1.5);					
\draw	 [thick]	(0,0)--(-1.5,1.5);
\draw[->,thick,>=stealth]	(-3,-3) -- (-1.5,-1.5);					
\draw[thick]					(-1.5,-1.5) -- (0,0);
\draw[->,thick,>=stealth]	(0,0) -- (1.5,-1.5);					
\draw[thick]					(1.5,-1.5) -- (3,-3);
\draw[fill]						(0,0)	circle[radius=0.075];		
\node [below] at  			(0,-0.15)		{$0$};
\node[left] at					(2.5,3)		{$\Sigma_A^1$};
\node[right] at					(-2.5,3)		{$\Sigma_A^2$};
\node[right] at					(-2.5,-3)		{$\Sigma_A^3$};
\node[left] at					(2.5,-3)		{$\Sigma_A^4$};
\end{tikzpicture}
\qquad
\begin{tikzpicture}[scale=0.6]
\draw[dashed] 				(-4,0) -- (4,0);							
\draw [->,thick,>=stealth] 	(0,0) -- (1.5,1.5);						
\draw  [thick]  (3,3) -- (1.5, 1.5);
\draw 	[->,thick,>=stealth]	  (-3,3) -- (-1.5,1.5);					
\draw	 [thick]	(0,0)--(-1.5,1.5);
\draw[->,thick,>=stealth]	(-3,-3) -- (-1.5,-1.5);					
\draw[thick]					(-1.5,-1.5) -- (0,0);
\draw[->,thick,>=stealth]	(0,0) -- (1.5,-1.5);					
\draw[thick]					(1.5,-1.5) -- (3,-3);
\draw[fill]						(0,0)	circle[radius=0.075];		
\node [below] at  			(0,-0.15)		{$0$};
\node[left] at					(2.5,3)		{$\Sigma_B^1$};
\node[right] at					(-2.5,3)		{$\Sigma_B^2$};
\node[right] at					(-2.5,-3)		{$\Sigma_B^3$};
\node[left] at					(2.5,-3)		{$\Sigma_B^4$};
\end{tikzpicture}
\label{fig:jumps-A-B}
\end{figure}
$m^A$ solves the following Riemann-Hilbert problem
\begin{equation}
\left\{\begin{array}{ll}m_{+}^{A}(\zeta) & =m_{-}^{A}(\zeta) v^{B}(\zeta), \quad \zeta \in \Sigma_{A} \\\\ m^{A}(\zeta) &  = I-\frac{m_{1}^{B}}{\zeta}+O\left(\zeta^{-2}\right), \quad \zeta \rightarrow \infty\end{array}\right.
\end{equation}
We have from the list of entries stated in \eqref{R1}, \eqref{R4}, \eqref{R7+} and \eqref{R8+} the rescaled jump matrices on $\Sigma_A$:
\begin{equation}
\label{jump-vA}
v^{A}=\begin{cases}
  \Twomat{1}{0}{(\delta^0_A(z) )^{-2} r(z_0) (-\zeta_-)^{ 2i\kappa} e^{- i\zeta_-^2/2} }{1}, \quad \zeta_- \in \Sigma_A^2 \\
  \Twomat{1}{0}{ \dfrac{   (\delta^0_A(z) )^{-2}  r(z_0)}{1-|r(z_0)|^2} (-\zeta_-)^{ 2i\kappa} e^{- i\zeta_-^2/2} }{1}, \quad \zeta_- \in \Sigma_A^4 \\
 \Twomat{1}{-(\delta^0_A(z) )^{2} \overline{ r(z_0)} (-\zeta_-)^{ -2i\kappa} e^{ i\zeta_-^2/2}  } {0}{0},\quad \zeta_- \in \Sigma_A^3 \\
  \Twomat{1}
{\dfrac{ (\delta^0_A(z) )^{2} \overline{r(z_0)}}{1-|r(z_0)|^2} (-\zeta_-)^{ -2i\kappa} e^{ i\zeta_-^2/2}  }{0}{1} ,\quad \zeta_- \in \Sigma_A^1 .
\end{cases}
\end{equation}
Similarly we have from the the rescaled jump matrices on $\Sigma_B$:
\begin{equation}
\label{jump-vB}
v^{B}=\begin{cases}
 \Twomat{1}{0}{(\delta^0_B(z) )^{-2} r(z_0) \zeta_+^{- 2i\kappa} e^{ i \zeta_+^2/2}  }{1}, \quad \zeta_+ \in \Sigma_B^1 \\
\Twomat{1}{0}{
\dfrac{ (\delta^0_B(z) )^{-2}  r(z_0)}{1-|r(z_0)|^2}  \zeta_+^{- 2i\kappa} e^{ i \zeta_+^2/2} }{1}, \quad \zeta_+ \in \Sigma_B^3 \\
\Twomat{1}{-(\delta^0_B(z) )^{2}   \overline{ r(z_0)}  \zeta_+^{2i\kappa} e^{- i \zeta_+^2/2}  } {0}{1}, \quad \zeta_+ \in \Sigma_B^4 \\
 \Twomat{1}
{\dfrac{ (\delta^0_B(z) )^{2}  \overline{r(z_0)}}{1-|r(z_0)|^2}  \zeta_+^{ 2i\kappa} e^{ -i \zeta_+^2/2} }{0}{1}, \quad \zeta_+ \in \Sigma_B^2 .
\end{cases}
\end{equation}
$m^B$ solves the following Riemann-Hilbert problem
\begin{equation}
\left\{\begin{array}{ll}m_{+}^{B}(\zeta) & =m_{-}^{B^{0}}(\zeta) v^{B}(\zeta), \quad \zeta \in \Sigma_{B} \\\\ m^{B}(\zeta) & = I-\frac{m_{1}^{B}}{\zeta}+O\left(\zeta^{-2}\right), \quad \zeta \rightarrow \infty\end{array}\right.
\end{equation}

The explicit form of $m^{B^0}_1$ is given as follows (see \cite[ Section 4]{DZ93}) :
\begin{equation}
\label{explicit-B0}
m^{B}_1=\twomat{0}{-( \delta^0_B)^2 i\beta_{12}}{ ( \delta^0_B)^{-2}  i\beta_{21}}{0}
\end{equation}
where
$$\beta_{12}=\dfrac{\sqrt{2\pi } e^{i\pi/4} e^{-\pi \kappa } }{r(z_0) \Gamma(-i\kappa)}, \qquad \beta_{21}=\dfrac{-\sqrt{2\pi } e^{-i\pi/4} e^{-\pi \kappa } }{ \overline{ r(z_0) } \Gamma(i\kappa)}$$
and $\Gamma(z)$ is the \textit{Gamma} function. Using the explicit form of $v^{B}$ given by \eqref{jump-vB}, symmetry reduction given by \eqref{minus} and their analogue for $v^{A}$, we verify that
\begin{equation}
v^{A}(z)=  \sigma_3 \overline{ v^{B}( - \overline{z} )  } \sigma_3  
\end{equation}
which  in turn implies by uniqueness that
\begin{equation}
m^{A}(z)= \sigma_3 \overline{ m^{B}( - \overline{z} )  } \sigma_3  
\end{equation}
and from this we deduce that
\begin{align}
\label{mA-mB}
m^{A}_1 &=-\sigma_3 \overline{ m^{B}_1  } \sigma_3  \\
\nonumber
&=\twomat{0}{ ( \delta^0_A)^{-2} i \overline{\beta}_{12}}{- ( \delta^0_A)^{-2} i \overline{\beta}_{21} }{0}.
\end{align}
Collecting all the computations above, we write down the asymptotic expansions of solutions to Problem \ref{prob:mkdv.A} and Problem \ref{prob:mkdv.B} respectively.
\begin{proposition}
\label{solution-A-B}
Recall $\zeta_-=\sqrt{48z_0 t}(z + z_0) $, the solution to RHP Problem  \ref{prob:mkdv.A}  $m^{A'}$ admits the following expansion:
\begin{equation}
\label{expansion-A}
m^{A'}(z(\zeta) ;x,t)=I +\dfrac{1}{\zeta_-}\twomat{0}{i ( \delta^0_A)^2 \overline{\beta}_{12}}{-i ( \delta^0_A)^{-2}\overline{\beta}_{21} }{0} +\mathcal{O}(t^{-1}).
\end{equation}
Similarly, for $\zeta_+=\sqrt{48z_0 t}(z - z_0) $, the solution to RHP Problem  \ref{prob:mkdv.B}  $m^{B'}$ admits the following expansion:
\begin{equation}
\label{expansion-B}
m^{B'}(z(\zeta) ;x,t)=I +\dfrac{1}{\zeta_+}\twomat{0}{-i ( \delta^0_B)^2 {\beta}_{12}}{i ( \delta^0_B)^{-2}{\beta}_{21} }{0} +\mathcal{O}(t^{-1}).
\end{equation}
\end{proposition}
Now we construct $m^\RHP$ needed in the factorization of $m^{(2)}$ in \eqref{factor-LC}. 
In Figure \ref{fig:contour-circle}, we let $\rho$ be the radius of the circle $C_A$ ($C_B$) centered at $z_0$ ($-z_0$). We seek a solution of the form
\begin{equation}
\label{parametrix}
m^\RHP(z)=\begin{cases}
E(z) \quad  &\left\vert z\pm z_0 \right\vert>\rho \\
E(z) m^{A'}(z) \quad &\left\vert z + z_0 \right\vert\leq\rho \\
E(z) m^{B'}(z) \quad & \left\vert z - z_0 \right\vert\leq \rho 
\end{cases}
\end{equation}
\begin{figure}[H]
\caption{ $\Sigma_E$}
\vskip 15pt
\begin{tikzpicture}[scale=0.75]

\draw [->,thick,>=stealth] (-2.5304, 0.5304)--(-4, 2);
\draw [thick] (-4, 2)--(-5, 3);

\draw [->,thick,>=stealth]  (-5,- 3)--(-4, -2);
\draw [thick] (-2.5304, -0.5304)--(-4, -2);

\draw [->,thick,>=stealth] (5, 3)--(4, 2);
\draw [thick] (4, 2)--(2.5304, 0.5304);

\draw [->,thick,>=stealth]  (2.5304, -0.5304)--(4,-2) ;
\draw [thick] (5, -3)--(4, -2) ;

\draw [->,thick,>=stealth]  (0, 2)--(-1. 4696, 0.5304);

\draw [->,thick,>=stealth]  (1. 4696, 0.5304)--(1,1);
\draw[thick] (1,1)--(0,2);

\draw [->,thick,>=stealth]  (-1. 4696, -0.5304)--(-1,-1);
\draw[thick] (-1,-1)--(0, -2);

\draw [->,thick,>=stealth]  (0, -2)--(1, -1);
\draw[thick] (1, -1) -- (1. 4696, -0.5304) ;

\draw	[fill]							(-2,0)		circle[radius=0.1];	
\draw	[fill]							(2,0)		circle[radius=0.1];

\draw							(-2,0)		circle[radius=0.75];	
\draw							(2,0)		circle[radius=0.75];	

\draw[->,thick,>=stealth] 	(0, -2) -- (0,0);
\draw[thick]	(0,0) -- (0,2);		
\node[below] at (-2,-0.1)			{$-z_0$};
\node[below] at (2,-0.1)			{$z_0$};
\node[right] at (5,3)					{$\Sigma'^{(2)}_1$};
\node[left] at (-5,3)					{$\Sigma'^{(2)}_2$};
\node[left] at (-5,-3)					{$\Sigma'^{(2)}_3$};
\node[right] at (5,-3)				{$\Sigma'^{(2)}_4$};
\node[left] at (-1,1.2)					{$\Sigma'^{(2)}_5$};
\node[left] at (-1,-1.2)					{$\Sigma'^{(2)}_7$};
\node[right] at (1,1.2)					{$\Sigma'^{(2)}_6$};
\node[right] at (1,-1.2)					{$\Sigma'^{(2)}_8$};
\node[right] at (0,0)              {$\Sigma'^{(2)}_9$};
\end{tikzpicture}
\label{fig:contour-circle}
\end{figure}
Since $ m^{A'}$ and $ m^{B'}$ solve Problem \ref{prob:mkdv.A} and Problem \ref{prob:mkdv.B} respectively, we can construct the solution $m^\RHP(z)$ if we find $E(z)$. Indeed,  $E$ solves the following Riemann-Hilbert problem:
\begin{problem}
\label{prob: E-2}
Find a matrix-valued function $E(z)$ on $\bbC \setminus \Sigma_E$ with the following properties:
\begin{enumerate}
\item		$E(z) \rarr I$ as $ z \rarr \infty$,
\item		$E(z) $ is analytic for $z \in  \bbC \setminus \left( C_A\cup C_B \right)$
			with continuous boundary values
			$E_{\pm}(z)$.
\item On $ C_A\cup C_B $we have the following jump conditions
$$E_{+}(z)=E_{-}(z) v^{ (E) }(z)$$
			where 
			\begin{equation}
			\label{v-E}
			 v^{ (E) }(z)=\begin{cases}
			m^{A'}(z(\zeta) ), \quad &z\in C_A\\
			m^{B'}(z(\zeta) ), \quad &z\in C_B\\
			v^{(2)} , \quad &z\in \Sigma_E\setminus\left(  C_A\cup C_B\right).
			\end{cases}
			\end{equation}
\end{enumerate}
\end{problem}
\begin{proposition}
 $E(z)$ admits a classical solution, i.e jump condition \eqref{v-E} holds pointwise on the contour $\Sigma_E$. 
\begin{proof}
Here we invoke to the well-established existence and uniqueness theory from \cite{Zhou89} (see also chapter 2 \cite{TO16}). First it is easy to check that 
$$v^{ (E) }(z)= v^{ (E) \dagger}(z)$$
where the $\dagger$ denotes the Hermitian conjugate of the given matrix. We then take care of the zero sum condition at the self-intersecting points of $\Sigma_E$. Since the remaining cases follows from symmetry, we will only look $\Sigma'^{(2)}_5 \cap \Sigma'^{(2)}_6\cap \Sigma'^{(2)}_{9} $ and $\Sigma'^{(2)}_6\cap C_A $. The zero sum condition holds at the first point by comparing \eqref{R8+} and \eqref{v_9}. For $\Sigma'^{(2)}_6\cap C_A $, (after adding contour with identity jumps and reorientation cf. P. 1058 \cite{DZ03} ) we explicitly compute
\begin{align*}
I & =  m^{A'}(z(\zeta) )  \left[  v^{(2)} \right]^{-1}  \left[ m^{A'}(z(\zeta) ) \right] ^{-1}\\
   &= m^{A'}_+(z(\zeta) )\left( v^{(2)} \right)^{-1} \left( m^{A'}_-(z(\zeta))\right)^{-1}.
\end{align*}
Since $v^{(2)}$ is smooth away from the intersections and zero sum conditions have been verified, this completes the proof.
\end{proof}
\end{proposition}

Setting 
$$\eta(z)=E_{-}(z)-I$$
then by standard theory, we have the following singular integral equation
$$  \eta=I+ C_{v^{(E)}}\eta $$
where the singular integral operator is defined by:
$$  C_{v^{(E)}}\eta =C^-\left( \eta \left( v^{(E)}-I  \right)  \right). $$
We first deduce from \eqref{expansion-A}-\eqref{expansion-B} that 
\begin{equation}
\norm{v^{(E)}-I}{L^\infty} \lesssim t^{-1/2}
\end{equation}
hence the operator norm of $C_{v^{(E)}} $ 
\begin{equation}
\label{norm-vE}
\norm{C_{v^{(E)}}f }{ L^2} \leq \norm{f}{L^2} \norm{v^{(E)}-I}{L^\infty} \lesssim t^{-1/2}.
\end{equation}
Then the resolvent operator $(1-C_{v^{(E)}})^{-1}$ can be obtained through \textit{Neumann} series and we obtain the unique solution to Problem \ref{prob: E-2}:
\begin{equation}
E(z)=I+ \dfrac{1}{2\pi i}\int_{C_A\cup C_B} \dfrac{ (1+\eta(s))(v^{(E)}(s)-I )   }{s-z}ds
\end{equation}
which admits the following asymptotic expansion in $z$:
\begin{equation}
\label{E2-expan}
E_2(z)=I+ \dfrac{E_{1}}{z} +\mathcal{O}\left( \dfrac{1}{z^2} \right).
\end{equation}
Using the bound on the operator norm \eqref{norm-vE}, we obtain
\begin{align}
\label{bound-E-1}
E_{1}(z) &=-\dfrac{1}{2\pi i}\int_{C_A\cup C_B} { (1+\eta(s))(v^{(E)}(s)-I )   }ds  \\
              &=-\dfrac{1}{2\pi i}\int_{C_A\cup C_B} { (v^{(E)}(s)-I )   }ds +\mathcal{O}(t^{-1}).
\end{align}
Given the form of $v^{(E)}$ in \eqref{v-E} and the asymptotic expansions \eqref{expansion-A}-\eqref{expansion-B}, an application of Cauchy's integral formula leads to
\begin{align}
\label{E-1-2 cauchy}
E_{1}&=\dfrac{1}{\sqrt{48 z_0 t}} \twomat{0}{-i ( \delta^0_B)^2 {\beta}_{12}}{i ( \delta^0_B)^{-2}{\beta}_{21} }{0}  + \dfrac{1}{\sqrt{48 z_0 t}}  \twomat{0}{ i ( \delta^0_A)^2 \overline{\beta}_{12} }{-i ( \delta^0_A)^{-2}\overline{\beta}_{21} }{0}\\
 \nonumber
     & \quad + \mathcal{O}(t^{-1}).
\end{align}

After possible reorientation of the contours, using the reconstruction formula given by \eqref{mkdv.q}, we expect that 
\begin{align}
\label{leading-I}
u(x,t)&=\left( \dfrac{\kappa}{3z_0 t}\right)^{1/2}\cos \left(16tz_0^3-\kappa\log(192tz_0^3)+\phi(z_0) \right)\\
\nonumber
&\quad + O\left(\dfrac{c(z_0)}{\sqrt{z_0 t} \tau^{1/2}}\right) + \mathcal{E}_1
\end{align}
where
$$\phi(z_0)=\arg \Gamma(i\kappa)-\dfrac{\pi}{4}-\arg r(z_0)+\dfrac{1}{\pi}\int_{-z_0}^{z_0}\log\left( \dfrac{1-|r(\zeta)|^2}{1-|r(z_0)|^2} \right)\dfrac{d\zeta}{\zeta-z_0} $$
and $\mathcal{E}_1$ is the error induced by a pure-$\dbar$ problem to be studied in the following section.
\section{The $\dbar$-Problem}
\label{sec:dbar}
 
From \eqref{factor-LC} we have matrix-valued function
\begin{equation}
\label{N3}
m^{(3)}(z;x,t) = m^{(2)}(z;x,t) m^\RHP(z; x,t)^{-1}. 
\end{equation}
The goal of this section is to show that $m^{(3)}$ only results in an error term $E_1$ with higher order decay rate than the leading order term of the asymptotic formula \eqref{leading-I}.

Since $m^\RHP(z; x, t)$ is analytic in $\bbC \setminus \Sigma^{'(2)}$, we may compute
\begin{align*}
\dbar m^{(3)}(z;x,t) 	&=	\dbar m^{(2)}(z;x,t) m^\RHP(z; x,t)^{-1}\\	
								&=	m^{(2)}(z;x,t) \, \dbar \calR^{(2)}(z) m^\RHP(z; x,t)^{-1}	&\text{(by \eqref{N2.dbar})}\\
								&=	m^{(3)}(z;x,t) m^{\RHP}(z; x, t) \, \dbar \calR^{(2)}(z) m^\RHP(z; x,t)^{-1}	& \text{(by \eqref{N3})}\\
								&=	m^{(3)}(z;x,t)  W(z;x,t)
\end{align*}
where
 \begin{equation}
 \label{W-bound}
 W(z;x,t) = m^{\RHP}(z;x,t ) \, \dbar \calR^{(2)}(z)  m^\RHP(z;x,t )^{-1}. 
 \end{equation}
We thus arrive at the following pure $\dbar$-problem:

\begin{problem}
\label{prob:DNLS.dbar}
Give $r \in H^{1}(\bbR)$, find a continuous matrix-valued function
$m^{(3)}(z;x,t)$ on $\bbC$ with the following properties:
\begin{enumerate}
\item		$m^{(3)}(z;x,t) \rarr I$ as $|z| \rarr \infty$, 
\medskip
\item		$\dbar m^{(3)}(z;x,t) = m^{(3)}(z;x,t) W(z;x,t)$.
\end{enumerate}
\end{problem}

It is well understood (see for example \cite[Chapter 7]{AF}) that the solution to this $\dbar$ problem is equivalent to the solution of a Fredholm-type integral equation involving the solid Cauchy transform
$$ (Pf)(z) = \frac{1}{\pi} \int_\bbC \frac{1}{\zeta-z} f(\zeta) \, d\zeta $$
where $d$ denotes Lebesgue measure on $\bbC$. 
\begin{lemma}
A bounded and continuous matrix-valued function $m^{(3)}(z;x,t)$ solves Problem \eqref{prob:DNLS.dbar} if and only if
\begin{equation}
\label{DNLS.dbar.int}
m^{(3)}(z;x,t) =I+ \frac{1}{\pi} \int_\bbC \frac{1}{\zeta-z} m^{(3)}(\zeta;x,t) W(\zeta;x,t) \, d\zeta.
\end{equation}
\end{lemma}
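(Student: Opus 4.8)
The plan is to prove the two implications of the equivalence separately, with the whole argument resting on one analytic fact: the solid Cauchy transform $P$ is a right inverse of $\dbar$, that is $\dbar(Pf)=f$ in the distributional sense (the solid Cauchy transform inverts $\dbar$), which reflects that $1/(\pi z)$ is the fundamental solution of $\dbar$. This is the classical Cauchy--Pompeiu/Cauchy--Green theory recalled in \cite[Chapter 7]{AF}.

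Before either implication I would record that $W$ from \eqref{W-bound} is well controlled. Indeed $W = m^{\RHP}\,(\dbar \calR^{(2)})\,(m^{\RHP})^{-1}$, where $\dbar\calR^{(2)}$ vanishes off the sectors $\Omega_i$ and obeys the pointwise bound of Lemma \ref{lemma:dbar.Ri}; combined with the boundedness of $m^{\RHP}$ and its inverse (from the solution of the localized Problem \ref{MKDV.RHP.local}) and the exponential factors $e^{\pm 2i\theta}$, this places $m^{(3)}W$ in a class for which $P(m^{(3)}W)$ is continuous and tends to $0$ as $|z|\to\infty$. In particular, any bounded continuous solution of \eqref{DNLS.dbar.int} automatically obeys $m^{(3)}\to I$, so property (1) of Problem \ref{prob:DNLS.dbar} comes for free.

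For the direction \eqref{DNLS.dbar.int} $\Rightarrow$ Problem \ref{prob:DNLS.dbar}, I would apply $\dbar_z$ to \eqref{DNLS.dbar.int}: the constant $I$ is annihilated and the right-inverse property yields $\dbar_z P(m^{(3)}W) = m^{(3)}W$, which is property (2), while property (1) was checked above. For the converse, suppose $m^{(3)}$ solves Problem \ref{prob:DNLS.dbar} and set $\Phi := m^{(3)} - I - P(m^{(3)}W)$. Then $\dbar \Phi = m^{(3)}W - m^{(3)}W = 0$, so $\Phi$ is entire, while $m^{(3)}-I\to 0$ and $P(m^{(3)}W)\to 0$ at infinity force $\Phi\to 0$; Liouville's theorem then gives $\Phi\equiv 0$, which is precisely \eqref{DNLS.dbar.int}.

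The main obstacle is making the two formal steps rigorous, namely justifying that $m^{(3)}W$ is integrable enough that $P(m^{(3)}W)$ is a genuine continuous function, decays at infinity, and may be $\dbar$-differentiated under the integral sign in the distributional sense. Since the sectors $\Omega_1,\Omega_3,\Omega_4,\Omega_6$ are unbounded, this decay is not automatic and relies on the $L^2$ character of $p_i'$ (from $r\in H^1(\bbR)$) together with the exponential phase factors supplied through Lemma \ref{lemma:dbar.Ri}; once these integrability properties are in hand, the distributional differentiation and the Liouville argument are routine.
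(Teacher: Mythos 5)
Your argument is the classical Cauchy--Pompeiu plus Liouville equivalence that the paper does not actually write out but simply invokes by citing \cite[Chapter 7]{AF}, so it coincides with the intended proof: one direction by applying $\dbar_z$ to the integral representation and using that the solid Cauchy kernel inverts $\dbar$, the other by noting that $m^{(3)}-I-P(m^{(3)}W)$ is entire and vanishes at infinity, together with the integrability of $W$ over the unbounded sectors supplied by Lemma \ref{lemma:dbar.Ri} and Lemma \ref{lemma:RHP.bd}. One point to check: with the kernel $\frac{1}{\zeta-z}$ and Lebesgue measure as written in \eqref{DNLS.dbar.int}, one actually has $\dbar_z(Pf)=-f$ rather than $+f$ (since $\dbar\bigl(\tfrac{1}{\pi z}\bigr)=\delta_0$, the inverting kernel is $\tfrac{1}{z-\zeta}$), so the identity you quote holds only after flipping the sign of the kernel; this discrepancy is inherited from the paper's own statement of the lemma and does not affect the structure of your argument.
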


 Using the integral equation formulation \eqref{DNLS.dbar.int}, we will prove:

\begin{proposition}
\label{prop:N3.est}
Suppose that $r \in H^{1}(\bbR)$.
Then, for $t\gg 1$, there exists a unique solution $m^{(3)}(z;x,t)$ for Problem \ref{prob:DNLS.dbar} with the property that 
\begin{equation}
\label{N3.exp}
m^{(3)}(z;x,t) = I + \frac{1}{z}  m^{(3)}_1(x,t) + o\left( \frac{1}{z} \right) 
\end{equation}
for $z=i\sigma$ with $\sigma \rarr +\infty$. Here
\begin{equation}
\label{N31.est}
\left| m^{(3)}_1(x,t) \right| \lesssim (z_0 t)^{-3/4}
\end{equation}
 where the implicit constant in \eqref{N31.est} is uniform for 
$r$ in a bounded subset of $H^{1}(\bbR)$ .
\end{proposition}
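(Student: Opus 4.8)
The plan is to recast the integral equation \eqref{DNLS.dbar.int} as $(1-P_W)m^{(3)}=I$, where
\[
(P_W f)(z) = \frac{1}{\pi}\int_\bbC \frac{f(\zeta)\,W(\zeta)}{\zeta-z}\,dA(\zeta),
\]
and to show that $\norm{P_W}{L^\infty(\bbC)\to L^\infty(\bbC)}$ decays like a negative power of $z_0 t$ (I expect the rate $(z_0 t)^{-1/4}$), so that for $t\gg 1$ the Neumann series $\sum_{k\ge 0}P_W^k$ converges and produces the unique bounded solution with $\norm{m^{(3)}}{L^\infty}\lesssim 1$. To bound the integrand, note that since $m^{\RHP}$ solves the localized Problem \ref{MKDV.RHP.local} and is built from parabolic cylinder functions, both $m^{\RHP}$ and $(m^{\RHP})^{-1}$ are uniformly bounded; hence \eqref{W-bound} gives $|W(\zeta)|\lesssim |\dbar\calR^{(2)}(\zeta)|$, supported in the sectors $\Omega_i$. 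On each $\Omega_i$, Lemma \ref{lemma:dbar.Ri} yields $|\dbar R_i|\lesssim |p_i'(\Real\zeta)|+|\zeta\mp z_0|^{-1/2}$, while, writing $\zeta\mp z_0=\rho e^{i\phi}$, the oscillatory factors contribute genuine Gaussian decay $|e^{\pm 2i\theta}|\lesssim e^{-c\,t z_0\rho^2}$ exactly as in \eqref{R7-decay}--\eqref{R8-decay}.

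For the operator bound I would estimate $\frac1\pi\int_\bbC |\zeta-z|^{-1}|W(\zeta)|\,dA$ uniformly in $z$, splitting region by region and inserting the two-term bound above. The $p_i'$-piece is controlled by Cauchy--Schwarz in the real variable, using $p_i'\in L^2$ (since $r\in H^1$) after integrating the transverse variable against the Gaussian; the $|\zeta\mp z_0|^{-1/2}$-piece is handled by Hölder, splitting the Cauchy kernel $|\zeta-z|^{-1}$ into its locally $L^p$ part ($p<2$) near $\zeta=z$ and its decaying tail. Each contribution carries a factor $(z_0 t)^{-1/4}$, giving $\norm{P_W}{L^\infty\to L^\infty}\lesssim (z_0 t)^{-1/4}$ with constants depending on $r$ only through $\norm{r}{H^1}$; this furnishes existence, uniqueness, and the uniform bound on $m^{(3)}$ for large $t$.

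To extract the coefficient I set $z=i\sigma$ and expand $(\zeta-z)^{-1}=-z^{-1}-\zeta z^{-2}+\cdots$ in \eqref{DNLS.dbar.int}, which identifies
\[
m^{(3)}_1(x,t) = -\frac{1}{\pi}\int_\bbC m^{(3)}(\zeta;x,t)\,W(\zeta;x,t)\,dA(\zeta)
\]
and, since $|\zeta|\,|W|$ is integrable, shows the remainder is $\littleO{1/z}$. Bounding $|m^{(3)}_1|\lesssim \int_\bbC|W|\,dA$ via $\norm{m^{(3)}}{L^\infty}\lesssim 1$, the absence of the Cauchy kernel improves the count: the $p_i'$-piece gives $(z_0t)^{-1/2}$ from the transverse Gaussian times $(z_0t)^{-1/4}$ from Cauchy--Schwarz, and the $|\zeta\mp z_0|^{-1/2}$-piece gives $\int_0^\infty \rho^{1/2}e^{-c t z_0\rho^2}\,d\rho\lesssim (z_0t)^{-3/4}$ directly in polar coordinates; both are $\lesssim (z_0t)^{-3/4}$, which is precisely \eqref{N31.est}.

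I expect the operator-norm estimate of the second step to be the main obstacle: one must simultaneously exploit the low regularity $r\in H^1$ (so only $p_i'\in L^2$ is available, forcing Cauchy--Schwarz rather than pointwise control), the Gaussian decay of the phase, and the local singularity of the solid Cauchy transform, all while keeping the constants uniform in $z_0$ and dependent on $r$ only through $\norm{r}{H^1}$.
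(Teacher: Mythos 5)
Your architecture is exactly the paper's: recast Problem \ref{prob:DNLS.dbar} as the integral equation \eqref{DNLS.dbar.int}, prove the operator norm bound $\norm{K_W}{L^\infty\to L^\infty}\lesssim (z_0t)^{-1/4}$ so that a Neumann series gives existence, uniqueness and $\norm{m^{(3)}}{L^\infty}\lesssim 1$, then identify $m^{(3)}_1$ as $\pm\frac1\pi\int_\bbC m^{(3)}W$ and bound it by $\int_\bbC|W|\lesssim(z_0t)^{-3/4}$. The two-term bound on $\dbar R_i$ and the split into a $p_i'$-piece (Cauchy--Schwarz against $\norm{r'}{L^2}$) and a $|\zeta\mp z_0|^{-1/2}$-piece is also the paper's (Lemmas \ref{lemma:dbar.Ri}, \ref{lemma:KW}, \ref{lemma:N3.exp}, \ref{lemma:N31.est}, which defer the final integrals to Propositions D.1--D.2 of \cite{BJM16}).

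There is, however, one step that would fail as written. You claim that on the full sectors $\Omega_i$ the phase gives ``genuine Gaussian decay'' $|e^{\pm2i\theta}|\lesssim e^{-c\,tz_0\rho^2}$ with $\rho=|\zeta\mp z_0|$, and you then evaluate the singular contribution to $\int_{\Omega_1}|W|$ in polar coordinates as $\int_0^\infty\rho^{1/2}e^{-c tz_0\rho^2}\,d\rho\lesssim(z_0t)^{-3/4}$. The estimates \eqref{R7-decay}--\eqref{R8-decay} that you invoke hold only \emph{on the deformed rays}, where $u\asymp v$. Off the rays the correct bound, proved in Lemma \ref{lemma:dbar.R.bd}, is $e^{-c\,tz_0|u||v|}$ with $\zeta=(u\pm z_0)+iv$, and this degenerates completely as the angle $\arg(\zeta\mp z_0)\to 0$: along the real edge of $\Omega_1$ there is no decay in $\rho$ at all, so the isotropic polar-coordinate computation is not a valid derivation (the fact that it reproduces the exponent $-3/4$ is a coincidence of the numerology). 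The integral must be done as an iterated integral, using the transverse decay $e^{-tz_0uv}\le e^{-tz_0v^2}$ on $u\ge v$ together with H\"older in $u$ with an exponent $p<4$ applied to $|u+iv|^{-1/2}$; this is what \cite[Prop.\ D.1--D.2]{BJM16} carry out and what the paper's $I_1,I_2$ decompositions implement. A second, smaller repair: to justify the $o(1/z)$ remainder you assert that $|\zeta|\,|W|$ is integrable, but this does not follow from the available bounds (the $p_i'$-piece times $|u|$ produces a divergent $v$-integral near $v=0$). The paper instead uses the exact identity $\frac{1}{\zeta-z}=-\frac1z-\frac{\zeta}{z(z-\zeta)}$ and the fact that $|\zeta|/|z-\zeta|$ is uniformly bounded for $z=i\sigma$, so that dominated convergence requires only $\int_\bbC|W|<\infty$, which is exactly what \eqref{W.L1.est} provides.
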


\begin{proof} Given Lemmas \ref{lemma:dbar.R.bd}--\ref{lemma:N31.est},
 as in \cite{LPS}, we   first show that, for large $t$, the integral operator $K_W$ 
defined by
\begin{equation*}
\left( K_W f \right)(z) = \frac{1}{\pi} \int_\bbC \frac{1}{\zeta-z} f(\zeta) W(\zeta) \, d \zeta
\end{equation*}
is bounded by
\begin{equation}
\label{dbar.int.est1}
\norm{K_W}{L^\infty \rarr L^\infty} \lesssim (z_0 t)^{-1/4}
\end{equation}
where the implied constants depend only on $\norm{r}{H^{1}}$.  This is the goal of Lemma \ref{lemma:KW}.
It implies that 
\begin{equation}
\label{N3.sol}
m^{(3)} = (I-K_W)^{-1}I
\end{equation}
exists as an $L^\infty$ solution of \eqref{DNLS.dbar.int}.

We  then show in Lemma \ref{lemma:N3.exp} that the solution $m^{(3)}(z;x,t)$ has a large-$z$ asymptotic expansion of the form \eqref{N3.exp}
where $z \rarr \infty$ along the \emph{positive imaginary axis}. Note that, for such $z$, we can bound $|z-\zeta|$ below by a constant times $|z|+|\zeta|$.
Finally,  in Lemma \ref{lemma:N31.est} we  prove  estimate \eqref{N31.est}
where the constants are uniform in $r$ belonging to a bounded subset of $H^{1}(\bbR)$.
Estimates \eqref{N3.exp}, \eqref{N31.est}, and \eqref{dbar.int.est1} result from the  bounds obtained in the next four lemmas.
\end{proof}

\begin{lemma}
\label{lemma:dbar.R.bd}
Set $\xi=\pm z_0$ and
 $z=(u +\xi)+iv$. We have 
\begin{equation}
\label{dbar.R2.bd}
\left| \dbar \calR^{(2)} e^{\pm 2i\theta}  \right| 	\lesssim		
		\left( |p_i'(\Real (z))| + |z-\xi|^{-1/2}\right) e^{-z_0t|u||v|}, 			
			\end{equation}
\end{lemma}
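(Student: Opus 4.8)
The plan is to split the estimate into the bound on $|\dbar R_i|$ already furnished by Lemma~\ref{lemma:dbar.Ri} and a pointwise bound on the oscillatory factor, so that the only genuinely new ingredient is control of $\Imag\theta$ on each sector. Since $\dbar\calR^{(2)}$ is supported on the $\Omega_i$ and its unique nonzero entry on $\Omega_i$ is $(\dbar R_i)\,e^{\pm 2i\theta}$, I would write
\[
\left| \dbar\calR^{(2)}\, e^{\pm 2i\theta} \right| = |\dbar R_i|\,\bigl| e^{\pm 2i\theta}\bigr| \lesssim \left( |p_i'(\Real z)| + |z-\xi|^{-1/2} \right) e^{\mp 2\Imag\theta},
\]
so that everything reduces to showing $e^{\mp 2\Imag\theta}\le e^{-z_0 t|u||v|}$ on the sector carrying the factor in question.

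First I would expand the phase about $\xi=\pm z_0$ exactly as in the display preceding \eqref{R7-decay}, namely $\theta = 4t\bigl((z\mp z_0)^3 \pm 3z_0(z\mp z_0)^2 \pm 2z_0^3\bigr)$; the last term is real and drops out, and with $z-\xi = u+iv$ one finds
\[
\Imag\theta = 4t\left( 3u^2 v - v^3 \pm 6 z_0 uv \right).
\]
The cross term $\pm 6z_0 uv$ is exactly the source of the advertised rate $z_0 t|u||v|$, and the remaining task is to verify sector by sector that the cubic remainder $3u^2 v - v^3$ does not reverse the sign forced by that term. On the outer sectors $\Omega_1,\Omega_3,\Omega_4,\Omega_6$ the $\pi/4$ opening of the rays gives $|v|\le|u|$, hence $|v^3|\le u^2|v|$ and $3u^2 v - v^3$ has the same sign as $v$ with magnitude $\gtrsim u^2|v|$; combined with the cross term---which on these sectors is oriented (by the way $\calR^{(2)}$ was built from the signature table) so as to make $\mp 2\Imag\theta$ negative---this yields $\mp 2\Imag\theta \le -c\,z_0 t|u||v|$ with $c\ge 1$.

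The delicate step will be the inner sectors $\Omega_7^\pm,\Omega_8^\pm$ abutting the segment $(-z_0,z_0)$, where the quadratic term $3u^2 v$ and the cross term $\pm 6z_0 uv$ carry opposite signs and would in principle compete. Here I would invoke the geometric fact that these sectors lie between $\pm z_0$ and on one side of the dividing line $\Sigma^{(3)}_9$, so that $|u| = |\Real z \mp z_0|\le z_0$; this absorbs the quadratic term into half of the cross term, since $3u^2|v| = 3|u|^2|v| \le 3 z_0|u||v| = \tfrac12\,6z_0|u||v|$, leaving $\mp 2\Imag\theta \le -c\,z_0 t|u||v|$ once more. In every case $e^{\mp 2\Imag\theta}\le e^{-c z_0 t|u||v|}\le e^{-z_0 t|u||v|}$, and multiplying by the Lemma~\ref{lemma:dbar.Ri} bound on $|\dbar R_i|$ gives the claim; the estimate near $-z_0$ is identical after flipping the sign of the cross term.
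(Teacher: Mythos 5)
Your proposal is correct and follows essentially the same route as the paper: expand $\theta$ cubically about $\xi=\pm z_0$, bound $\Real(\pm 2i\theta)$ sector by sector using $|v|\le|u|$ on the outer sectors and $|u|\le z_0$ on the inner ones to extract the $-cz_0t|u||v|$ decay from the cross term (with $c\ge 1$, e.g.\ $48$ and $24$ respectively), and then multiply by the bound on $|\dbar R_i|$ from Lemma \ref{lemma:dbar.Ri}. The paper only writes out $\Omega_1$ and $\Omega_7^+$ explicitly, but the inequalities it uses are exactly the ones you identify.
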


\begin{proof}
We only show the inequalities above in $\Omega_1$ and $\Omega_7^+$. Recall that near $z_0$
$$i\theta(z; x, t)=4it \left( (z - z_0)^3 + 3z_0 (z - z_0)^2 - 2z_0^3  \right).$$
In $\Omega_1$, we use the facts that $u\geq 0$, $v\geq 0$ and $|u|\geq |v|$ to deduce
\begin{align*}
\text{Re}(2i\theta) &=8it(3iu^2v-iv^3+6iuvz_0)\\
                              &=8t(-3u^2v+v^3-6uvz_0)\\
                              &\leq 8t(-3u^2v +u^2v-6uvz_0)\\
                              &\leq 8t (-2u^2v-6uvz_0)\\
                              &\leq -8|u| |v| z_0 t.
\end{align*}
Similarly, in $\Omega_7^+$, we have $u\leq 0$, $v\geq 0$ and $|u|\geq |v|$, hence
\begin{align*}
\text{Re}(-2i\theta) &=-8it(3iu^2v-iv^3+6iuvz_0)\\
                              &=8t(3u^2v+6uvz_0)\\
                              &\leq 8t(-3u z_0v +6uvz_0)\\
                            &\leq -8|u| |v| z_0 t.
\end{align*}
Estimate \eqref{dbar.R2.bd} then follows from Lemma \ref{lemma:dbar.Ri}. The quantities $p_i'(\Real z)$ are all bounded uniformly for  $r$ in a bounded subset of $H^{1}(\bbR)$.  

\end{proof}

\begin{lemma}
\label{lemma:RHP.bd}For the localized Riemann-Hilbert problem from Problem \ref{MKDV.RHP.local}, we have
\begin{align}
\label{RHP.bd1}
\norm{m^\RHP(\dotarg; x, t)}{\infty}	&	\lesssim		1,\\[5pt]
\label{RHP.bd2}
\norm{m^\RHP(\dotarg; x,t )^{-1}}{\infty}	&	\lesssim	1.
\end{align}
All implied constants are uniform  for  $r$ in a bounded subset of $H^{1}(\bbR)$.
\end{lemma}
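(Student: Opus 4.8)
The plan is to obtain the inverse bound \eqref{RHP.bd2} for free from the direct bound \eqref{RHP.bd1}, and then to prove \eqref{RHP.bd1} from the explicit solution \eqref{mLC} together with the uniform resolvent bound of Proposition \ref{resolvent-1}. For the first reduction, observe that each factor in the Beals--Coifman factorization \eqref{v2-factor} of $v^{(3)}$ is unipotent and hence has determinant $1$, so that $m^\RHP_+ = m^\RHP_- v^\RHP$ gives $\det m^\RHP_+ = \det m^\RHP_-$ across $\Sigma^{(3)}$. Thus $\det m^\RHP$ extends to an entire function; since it is locally bounded (the solution \eqref{mLC} has continuous boundary values by Proposition \ref{resolvent-1}) and tends to $1$ at infinity, Liouville's theorem forces $\det m^\RHP \equiv 1$. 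For a $2\times 2$ matrix of unit determinant the inverse is obtained by swapping the diagonal entries and negating the off-diagonal ones, so the set of entries and the singular values are unchanged; hence $\norm{(m^\RHP)^{-1}}{\infty} = \norm{m^\RHP}{\infty}$ and \eqref{RHP.bd2} follows once \eqref{RHP.bd1} is known.

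For \eqref{RHP.bd1} I would start from \eqref{mLC} with $\mu = (1-C_{w^{(3)}_\theta})^{-1}I$ as in \eqref{mu}, so that $m^\RHP(z) - I = \tfrac{1}{2\pi i}\int_{\Sigma^{(3)}} (\zeta - z)^{-1}\,\mu(\zeta)\, w^{(3)}_\theta(\zeta)\, d\zeta$. Proposition \ref{resolvent-1} together with the elementary estimate $\norm{C_{w^{(3)}_\theta} I}{L^2(\Sigma^{(3)})} \lesssim \norm{w^{(3)}_\theta}{L^2(\Sigma^{(3)})}$ (boundedness of $C^\pm$ on $L^2$) yields $\norm{\mu - I}{L^2(\Sigma^{(3)})} \lesssim \norm{w^{(3)}_\theta}{L^2(\Sigma^{(3)})}$, while the decay estimates \eqref{R7-decay}--\eqref{decay 1-2} show that $\norm{w^{(3)}_\theta}{L^1 \cap L^2}$ is bounded uniformly for $r$ in a bounded subset of $H^1(\bbR)$. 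Writing $\mu\, w^{(3)}_\theta = w^{(3)}_\theta + (\mu - I)\, w^{(3)}_\theta$ and applying Cauchy--Schwarz gives $\norm{\mu\, w^{(3)}_\theta}{L^1(\Sigma^{(3)})} \lesssim \norm{w^{(3)}_\theta}{L^1} + \norm{\mu - I}{L^2}\,\norm{w^{(3)}_\theta}{L^2} \lesssim 1$. Hence for $z$ at a fixed positive distance from $\Sigma^{(3)}$ the kernel $(\zeta-z)^{-1}$ is bounded and the integral is $\bigO{1}$ uniformly in $r$.

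The remaining, and main, difficulty is to make the bound uniform up to the contour, where $(\zeta-z)^{-1}$ is singular. Here I would invoke the explicit structure extracted in Section \ref{sec:local}: after the rescalings $N_A,N_B$ the localized problem splits into the two model problems governed by $A^0$ and $B^0$, whose solutions are given in closed form by parabolic cylinder functions (Deift--Zhou \cite[Sec.~4]{DZ93}), and these model solutions are bounded on $\bbC$ with bounds depending only on $\kappa$ and $r(\pm z_0)$. Since $|r(\pm z_0)|<1$ keeps $\kappa$ finite and $\norm{r}{L^\infty}\lesssim\norm{r}{H^1(\bbR)}$ by Sobolev embedding, these bounds are uniform for $r$ in a bounded subset of $H^1(\bbR)$. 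Lemma \ref{delta-B } and Lemma \ref{delta-A} control the discrepancy between the true and model jumps by $\bigO{\tau^{-1/2}}$, and the contributions of $\Sigma^{(3)}$ away from $\pm z_0$ are exponentially small by \eqref{R7-decay}--\eqref{R9-decay}; combining these estimates with the continuity of the boundary values $m^\RHP_\pm$ and the maximum modulus principle applied in each sector of $\bbC\setminus\Sigma^{(3)}$ propagates the model bounds to all of $\bbC$, giving \eqref{RHP.bd1}. The one point demanding care is precisely the uniformity in $r$ of the parabolic cylinder bounds, which hinges on $\kappa$ remaining bounded, i.e.\ on $\sup|r|$ staying away from $1$ on the relevant bounded subset of $H^1(\bbR)$.
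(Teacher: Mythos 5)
Your proposal is correct and follows essentially the same route the paper intends: the paper's entire proof of this lemma is the single sentence that it is ``a consequence of the previous section,'' and your argument is precisely an unpacking of that section --- the uniform $L^2$ resolvent bound of Proposition \ref{resolvent-1} together with the $L^1\cap L^2$ control of $w^{(3)}_\theta$ for the region away from the contour, and the reduction to the explicitly solvable parabolic-cylinder model problems (with the $\bigO{\tau^{-1/2}}$ comparison of Lemma \ref{delta-B } and Lemma \ref{delta-A}) for the uniform bound up to $\Sigma^{(3)}$ --- supplemented by the standard observation that $\det m^{\RHP}\equiv 1$ reduces \eqref{RHP.bd2} to \eqref{RHP.bd1}. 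Your closing caveat, that uniformity over a bounded subset of $H^1(\bbR)$ requires $\kappa$ (hence $\sup|r|$ bounded away from $1$) to stay under control, is exactly the implicit hypothesis under which the paper's uniformity claim holds for defocussing scattering data, where $1-|r|^2=(a\bar a)^{-1}$ is bounded below.
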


The proof of this lemma is a consequence of the previous section.

\begin{lemma}
\label{lemma:KW}
Suppose that $r\in H^{1}(\bbR)$.
Then, the estimate \eqref{dbar.int.est1}
holds, where the implied constants depend on $\norm{r}{H^{1}}$.
\end{lemma}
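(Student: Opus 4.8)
The plan is to reduce the operator-norm bound to a single scalar estimate and then exploit the two structural facts recorded in the preceding lemmas: the boundedness of $m^\RHP$ and $(m^\RHP)^{-1}$ (Lemma \ref{lemma:RHP.bd}) and the pointwise bound on $\dbar\calR^{(2)}$ with its Gaussian factor (Lemma \ref{lemma:dbar.R.bd}). First I would observe that for any $f\in L^\infty$,
$$\norm{K_W f}{\infty}\le\frac{1}{\pi}\norm{f}{\infty}\,\sup_{z\in\bbC}\int_\bbC\frac{|W(\zeta)|}{|\zeta-z|}\,dA(\zeta),$$
so it suffices to bound the last integral uniformly in $z$. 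By \eqref{W-bound} and Lemma \ref{lemma:RHP.bd} we have $|W(\zeta)|\lesssim|\dbar\calR^{(2)}(\zeta)|$, and by \eqref{dbar.R2.bd}, writing $\zeta=(u+\xi)+iv$ with $\xi=\pm z_0$ and $z=\alpha+i\beta$,
$$|W(\zeta)|\lesssim\left(|p_i'(\Real\zeta)|+|\zeta-\xi|^{-1/2}\right)e^{-z_0 t|u||v|}$$
on each sector $\Omega_i$. Since $\dbar\calR^{(2)}$ is supported on the finitely many sectors $\Omega_i$ and vanishes on $\Omega_2\cup\Omega_5$, it is enough to estimate the integral over one representative sector, say $\Omega_1$ near $z_0$; the remaining sectors are handled identically.

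On $\Omega_1$ I would split the integrand according to the two terms in the bound, writing the contribution as $I_1$ (the term with $|p_i'|$) and $I_2$ (the term with $|\zeta-\xi|^{-1/2}$), and perform the $v$-integration first. For the Cauchy kernel I use the elementary one-dimensional estimate $\norm{|\zeta-z|^{-1}}{L^2(dv)}\lesssim|u+\xi-\alpha|^{-1/2}$, while for the remaining factor I apply Cauchy--Schwarz over the actual (finite) $v$-extent of the sector slice. The Gaussian weight then contributes a gain of $(z_0 t)^{-1/4}$: for the singular piece $I_2$ one uses $u^2+v^2\ge 2uv$ together with the fact that on the support of the $\mathcal{K}'$-term in $\dbar R_i$ the argument is bounded away from the real axis, so that $e^{-z_0 t|u||v|}$ is genuinely Gaussian in $|\zeta-\xi|$; for $I_1$ one uses the sector constraint $|u|\ge|v|$ and the finite length of the slice to control the small-$u$ (stationary-point) region. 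This reduces both contributions to one-dimensional integrals of the schematic form
$$(z_0 t)^{-1/4}\int_0^\infty g(u)\,|u+\xi-\alpha|^{-1/2}\,du,$$
with $g$ equal either to $|p_i'(u+\xi)|$ or to a Gaussian of width $(z_0 t)^{-1/2}$.

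Finally I would bound these one-dimensional integrals uniformly in $\alpha=\Real z$. For $I_2$ the Gaussian width collapses the kernel singularity and yields $(z_0 t)^{-1/4}$ directly after the rescaling $u\mapsto\sqrt{z_0 t}\,u$; this is the dominant term and fixes the rate in \eqref{dbar.int.est1}. For $I_1$ I would invoke $r\in H^1(\bbR)$, hence $p_i'\in L^2(\bbR)$, and combine this with the half-power Cauchy kernel and the Gaussian, splitting the $u$-line around the stationary point $u=0$, the kernel singularity $u=\alpha-\xi$, and infinity, so that the localizing weights keep the integral finite and uniformly bounded. Summing the finitely many sectors and discarding the $\Sigma^{(3)}_9$-type pieces, which are $O(e^{-t})$ by \eqref{R9-decay}, gives \eqref{dbar.int.est1}. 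The hard part will be exactly this uniform-in-$z$ control of the $u$-integrals, and in particular extracting the sharp power $(z_0 t)^{-1/4}$: since $p_i'$ lies only in $L^2$ while $|u+\xi-\alpha|^{-1/2}$ is not square-integrable, a naive Cauchy--Schwarz diverges near the stationary point and near the pole of the kernel, and one must genuinely use the finite $v$-extent of each sector and interpolate between the two available bounds for the Gaussian factor, $(z_0 t\,u)^{-1/2}$ away from the stationary point and the uniform $(z_0 t)^{-1/4}$ near it. The non-analytic term $|\zeta-\xi|^{-1/2}$, which has no counterpart in the classical Schwartz-class analysis, is precisely what produces the leading $(z_0 t)^{-1/4}$ and therefore governs the size of the $\dbar$-error.
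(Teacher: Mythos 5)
Your reduction is exactly the paper's: bound $\norm{K_Wf}{\infty}$ by $\norm{f}{\infty}\sup_z\int_\bbC|W(\zeta)|\,|\zeta-z|^{-1}\,dA(\zeta)$, restrict to one representative sector $\Omega_1$, use Lemma \ref{lemma:RHP.bd} and Lemma \ref{lemma:dbar.R.bd} to replace $|W|$ by $\bigl(|p_1'(\Real\zeta)|+|\zeta-\xi|^{-1/2}\bigr)e^{-z_0t|u||v|}$, and split into $I_1$ and $I_2$. At that point the paper simply cites the proof of Proposition D.1 in \cite{BJM16} for $|I_1|,|I_2|\lesssim(z_0t)^{-1/4}$, whereas you attempt to supply the estimates — and it is there that your plan has a genuine gap.

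You propose to integrate in $v$ first, taking the $L^2_v$ norm of the Cauchy kernel to get $|u+\xi-\alpha|^{-1/2}$ and leaving a one-dimensional integral $\int g(u)\,|u+\xi-\alpha|^{-1/2}\,du$ with $g=|p_1'|$ for $I_1$. As you yourself note, $p_1'=-r'$ is only in $L^2$ while $|u+\xi-\alpha|^{-1/2}$ is not locally square-integrable near the pole of the kernel, so Cauchy--Schwarz fails; and this is not a removable technicality: for $r'\in L^2$ with a borderline singularity (e.g.\ $|u-a|^{-1/2}|\log|u-a||^{-1}$ near some point $a$) the integral $\int|p_1'(u)|\,|u-a|^{-1/2}\,du$ actually diverges, and the Gaussian factor does not help near a kernel pole located away from the stationary point, where it is merely a bounded constant. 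Your suggested repair (``interpolate between the two available bounds for the Gaussian factor'') does not address this, because the obstruction sits at $u=\alpha-\xi$, not at $u=0$. The correct order — the one used in \cite{BJM16} and reproduced later in this paper for Region III — is to integrate in $u$ first: for fixed $v$, Cauchy--Schwarz over $u\in(v,\infty)$ pairs $\norm{|z-\zeta|^{-1}}{L^2_u(v,\infty)}\le\pi^{1/2}|v-\beta|^{-1/2}$ with $\norm{p_1'e^{-tz_0uv}}{L^2_u(v,\infty)}\le\norm{r'}{L^2}e^{-tz_0v^2}$ (using $u\ge v$ in $\Omega_1$), so the kernel singularity lands in the $v$-variable where it is integrable, and $\int_0^\infty|v-\beta|^{-1/2}e^{-tz_0v^2}\,dv\lesssim(z_0t)^{-1/4}$ gives the rate; $I_2$ is handled the same way with H\"older exponents $2<p<4$ in place of Cauchy--Schwarz. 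With that transposition your argument closes; as written, the $I_1$ step would fail. (Your aside about discarding $\Sigma^{(3)}_9$-type contributions is not needed here, since $\dbar\calR^{(2)}$ vanishes on $\Omega_2\cup\Omega_5$ and the $\dbar$-problem sees no jump contours.)
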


\begin{proof}
To prove \eqref{dbar.int.est1}, first note that
\begin{align}
 \norm{K_W f}{\infty} &\leq \norm{f}{\infty} \int_\bbC \frac{1}{|z-\zeta|}|W(\zeta)| \, dm(\zeta) 
                                \end{align}
so that we need only estimate the right-hand integral. We will prove the estimate in the region $ z\in\Omega_1$ since estimates for the remaining regions are identical. From \eqref{W-bound}
$$ |W(\zeta)| \leq \norm{m^{\RHP}}{\infty} \norm{(m^{\RHP})^{-1}}{\infty} \left| \dbar R_1\right| |e^{2i\theta}|.$$
Setting $z=\alpha+i\beta$ and $\zeta=(u+z_0)+iv$, the region $\Omega_1$ corresponds to $u\geq v \geq 0 $. We then have from \eqref{dbar.R2.bd} \eqref{RHP.bd1}, and \eqref{RHP.bd2} that
$$
 \int_{\Omega_1}  \frac{1}{|z-\zeta|} |W(\zeta)| \, d\zeta  \lesssim  I_1 + I_2 
$$
where
\begin{align*}
I_1 	&=	\int_0^\infty \int_v^\infty \frac{1}{|z-\zeta|} |p_1'(u)| e^{-tz_0uv} \, du \, dv \\[5pt]
I_2	&=	\int_0^\infty \int_v^\infty \frac{1}{|z-\zeta|} \left| u+iv \right|^{-1/2} e^{-t z_0 uv} \, du \, dv.
\end{align*}
It now follows from \cite[proof of Proposition D.1]{BJM16} that
$$
|I_1|, \, |I_2| \lesssim (z_0t)^{-1/4}.
$$
It then follows that
$$ \int_{\Omega_1} \frac{1}{|z-z_0|} |W(\zeta)| \, d\zeta \lesssim (z_0t)^{-1/4} $$
which, together with similar estimates for the integrals over the remaining $\Omega_i$s,  proves \eqref{dbar.int.est1}.
\end{proof}

\begin{lemma}
\label{lemma:N3.exp}
For $z=i\sigma$ with $\sigma \rarr +\infty$, the expansion \eqref{N3.exp} holds with 
\begin{equation}
\label{N3.1}
m^{(3)}_1(x,t) = \frac{1}{\pi} \int_{\bbC} m^{(3)}(\zeta;x,t) W(\zeta;x,t) \, d\zeta . 
\end{equation}
\end{lemma}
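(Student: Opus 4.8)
The plan is to extract the $1/z$ coefficient directly from the integral equation \eqref{DNLS.dbar.int}. Since $m^{(3)}$ has already been obtained as a bounded solution $m^{(3)}=(I-K_W)^{-1}I$ in \eqref{N3.sol}, the quantity $m^{(3)}_1$ defined in \eqref{N3.1} makes sense as soon as $\int_\bbC|W|\,d\zeta<\infty$, and the whole statement reduces to a large-$z$ analysis of the solid Cauchy transform of $m^{(3)}W$ along the ray $z=i\sigma$. Multiplying \eqref{DNLS.dbar.int} by $z$ yields
$$z\bigl(m^{(3)}(z;x,t)-I\bigr)=\frac{1}{\pi}\int_\bbC \frac{z}{\zeta-z}\,m^{(3)}(\zeta;x,t)\,W(\zeta;x,t)\,d\zeta,$$
so that both the existence of the limit $m^{(3)}_1$ and the fact that the remainder in \eqref{N3.exp} is $o(1/z)$ follow at once once I show the right-hand integral converges to $m^{(3)}_1(x,t)$ as $\sigma\to+\infty$.

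I would establish this convergence by dominated convergence. For $z=i\sigma$ the support of $W$ lies in $\Omega_1\cup\Omega_3\cup\Omega_4\cup\Omega_6\cup\Omega_7^\pm\cup\Omega_8^\pm$ and therefore avoids a sector around the positive imaginary axis, so $\zeta$ stays uniformly away from $z$ and $|\zeta-z|\gtrsim|\zeta|+|z|$, exactly as recorded in the proof of Proposition \ref{prop:N3.est}. This gives the uniform kernel bound $\bigl|z/(\zeta-z)\bigr|\lesssim1$ for all large $\sigma$, while for each fixed $\zeta$ the kernel converges pointwise as $\sigma\to+\infty$. Using the $L^\infty$ bound on $m^{(3)}$ coming from \eqref{N3.sol}, the integrand is then dominated, uniformly in $\sigma$, by a constant multiple of $|W(\zeta)|$; dominated convergence produces the limit \eqref{N3.1}, and the convergence of $z(m^{(3)}-I)$ to a finite matrix is precisely the expansion \eqref{N3.exp} with error $o(1/z)$.

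The only ingredient that requires genuine work is the integrable majorant, namely $\int_\bbC|W|\,d\zeta<\infty$. Here I would use Lemma \ref{lemma:RHP.bd} to write $|W|\lesssim|\dbar\calR^{(2)}|$ via \eqref{W-bound}, and then invoke the pointwise bound \eqref{dbar.R2.bd} of Lemma \ref{lemma:dbar.R.bd}, reducing matters to the finiteness of
$$\int_{\Omega_i}\Bigl(|p_i'(\Real\zeta)|+|\zeta-\xi|^{-1/2}\Bigr)e^{-z_0t|u||v|}\,du\,dv,\qquad \zeta=(u+\xi)+iv,$$
over each sector. This is handled by the same type of estimate as the integrals $I_1$ and $I_2$ in the proof of Lemma \ref{lemma:KW}: the Gaussian weight $e^{-z_0t|u||v|}$ forces convergence at infinity after a change of variables $s=|u||v|$, the local singularity $|\zeta-\xi|^{-1/2}$ is integrable in two dimensions, and the term $|p_i'|$ is controlled from $r\in H^1(\bbR)$ by Cauchy--Schwarz. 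This integrability bookkeeping is the main, though essentially routine, obstacle; once it is in place the dominated-convergence step is immediate and the lemma follows.
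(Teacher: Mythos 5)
Your argument is correct and follows essentially the same route as the paper: both expand the solid Cauchy transform in \eqref{DNLS.dbar.int} along $z=i\sigma$, use the lower bound $|z-\zeta|\gtrsim |z|+|\zeta|$ valid off the support of $W$ together with the $L^\infty$ bound on $m^{(3)}$ from \eqref{N3.sol}, and reduce the whole matter to $\int_{\bbC}|W(\zeta)|\,d\zeta<\infty$, which is verified by exactly the $I_1$, $I_2$-type estimates you describe. (One minor point: since $z/(\zeta-z)\to -1$, your dominated-convergence limit is $-\frac{1}{\pi}\int_{\bbC} m^{(3)}W\,d\zeta$, so the coefficient in \eqref{N3.1} should carry a minus sign --- a slip the paper's own expansion shares and which is harmless for the subsequent absolute-value estimates.)
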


\begin{proof}
We write   \eqref{DNLS.dbar.int} as 
$$
m^{(3)}(z;x,t) = (1,0) + \frac{1}{z} m^{(3)}_1(x,t) + \frac{1}{\pi z} \int_{\bbC} \frac{\zeta}{z-\zeta} m^{(3)}(\zeta;x,t) W(\zeta;x,t) \, dm(\zeta)
$$
where $m^{(3)}_1$is given by \eqref{N3.1}. If $z=i\sigma$, it is easy to see that $|\zeta|/|z-\zeta|$ is bounded above by a fixed constant independent of $z$, while $|m^{(3)}(\zeta;x,t)| \lesssim 1$ by the remarks following \eqref{N3.sol}. If we can show that $\int_\bbC |W(\zeta;x,t)| \, d\zeta$ is finite, it will follow from the Dominated Convergence Theorem that 
$$
\lim_{\sigma \rarr \infty} \int_\bbC \frac{\zeta}{i\sigma-\zeta} m^{(3)}(\zeta;x,t) W(\zeta;x,t) \, d\zeta = 0 
$$ 
which implies the required asymptotic estimate. We will estimate $\dint_{\hspace{-1.25mm} \Omega_1} |W(\zeta)| \, dm(\zeta)$ since the other estimates are identical. One can write
$$\Omega_1= \left\{ (u+z_0,v): v \geq 0, \, v \leq u < \infty\right\}.$$ 
Using \eqref{dbar.R2.bd}, \eqref{RHP.bd1}, and \eqref{RHP.bd2}, we may then estimate
$$
\int_{\Omega_1} |W(\zeta;x,t)| \, d \zeta	\lesssim  I_1+I_2
$$
where
\begin{align*}
I_1	&=	\int_0^\infty \, \int_v^\infty \left| p_1'(u+z_0) \right| e^{-tz_0uv} \, du \, dv\\
I_2	&=	\int_0^\infty \int_v^\infty \left| u^2 + v^2 \right|^{-1/2} e^{-tz_0 uv} \, du \, dv. \\
\end{align*}
It now follows from \cite[Proposition D.2]{BJM16} that
$$I_1, \, I_2 \lesssim (z_0t)^{-3/4}.$$
These estimates together show that
\begin{equation}
\label{W.L1.est}
\int_{\Omega_1} |W(\zeta;x,t)| \, dm(\zeta)	\lesssim( z_0t)^{-3/4}
\end{equation}
and that the implied constant depends only on $\norm{r}{H^{1}}$.  In particular, the integral \eqref{W.L1.est} is bounded uniformly as $t \rarr \infty$.
\end{proof}

\begin{lemma}
\label{lemma:N31.est}
The estimate   \eqref{N31.est} 
 holds with constants uniform in $r$ in a bounded subset of $H^{1}(\bbR)$ .
\end{lemma}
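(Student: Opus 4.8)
The plan is to read the bound off directly from the integral representation \eqref{N3.1} for $m^{(3)}_1(x,t)$ that was established in Lemma \ref{lemma:N3.exp}, combined with the uniform $L^\infty$ control of $m^{(3)}$ and the $L^1(\bbC)$ decay of $W$. First I would take absolute values inside \eqref{N3.1} and factor out the sup norm of $m^{(3)}$:
$$
\left| m^{(3)}_1(x,t) \right|
		\leq \frac{1}{\pi} \int_{\bbC} \left| m^{(3)}(\zeta;x,t) \right| \left| W(\zeta;x,t) \right| \, d\zeta
		\leq \frac{1}{\pi} \norm{m^{(3)}}{\infty} \int_{\bbC} \left| W(\zeta;x,t) \right| \, d\zeta .
$$
In this way the desired estimate \eqref{N31.est} reduces to two ingredients: the uniform bound $\norm{m^{(3)}}{\infty} \lesssim 1$ and the integrability estimate $\int_{\bbC} |W(\zeta;x,t)| \, d\zeta \lesssim (z_0 t)^{-3/4}$.

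For the first ingredient I would recall from \eqref{N3.sol} that $m^{(3)} = (I - K_W)^{-1} I$. By Lemma \ref{lemma:KW} the operator norm $\norm{K_W}{L^\infty \rarr L^\infty} \lesssim (z_0 t)^{-1/4}$ is strictly less than $1$ for $t \gg 1$, so a Neumann series gives $\norm{(I - K_W)^{-1}}{L^\infty \rarr L^\infty} \lesssim 1$ and hence $\norm{m^{(3)}}{\infty} \lesssim 1$, with the implied constant depending only on $\norm{r}{H^{1}(\bbR)}$. For the second ingredient, note that $W = m^{\RHP}\, (\dbar \calR^{(2)})\, (m^{\RHP})^{-1}$ is supported only on the sectors $\Omega_1, \Omega_3, \Omega_4, \Omega_6, \Omega_7^\pm, \Omega_8^\pm$, since $\dbar \calR^{(2)}$ vanishes on $\Omega_2 \cup \Omega_5$ and away from these sectors. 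Thus
$$
\int_{\bbC} \left| W(\zeta;x,t) \right| \, d\zeta = \sum_{i} \int_{\Omega_i} \left| W(\zeta;x,t) \right| \, d\zeta ,
$$
a finite sum over these eight regions. The contribution of $\Omega_1$ is exactly the bound \eqref{W.L1.est} already obtained in the proof of Lemma \ref{lemma:N3.exp}, namely $\int_{\Omega_1} |W| \lesssim (z_0 t)^{-3/4}$; by the symmetry of the construction of $\calR^{(2)}$ and the phase estimates of Lemma \ref{lemma:dbar.R.bd}, each remaining region is handled by the identical argument (the same splitting into $I_1$ and $I_2$ and the appeal to \cite[Proposition D.2]{BJM16}), each contributing $\lesssim (z_0 t)^{-3/4}$. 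Summing the finitely many identical bounds yields $\int_{\bbC} |W| \lesssim (z_0 t)^{-3/4}$, and combining the two ingredients gives \eqref{N31.est}.

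I do not expect a genuine obstacle here: the real analytic work has already been done upstream. The decisive point is that in \eqref{dbar.R2.bd} the Gaussian factor $e^{-z_0 t |u||v|}$ controls the mild local singularity $|z-\xi|^{-1/2}$ of $\dbar \calR^{(2)}$ well enough to be integrable against $dm(\zeta)$, and this is precisely what produces the rate $(z_0 t)^{-3/4}$ rather than the weaker $(z_0 t)^{-1/4}$ of Lemma \ref{lemma:KW} (there the extra factor $|z-\zeta|^{-1}$ from the solid Cauchy kernel degrades the exponent). Since both the resolvent bound and the region-by-region $L^1$ estimates are uniform for $r$ in a bounded subset of $H^{1}(\bbR)$, the constant in \eqref{N31.est} inherits the same uniformity, and the proof is an assembly of results already in hand.
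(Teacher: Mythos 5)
Your proposal is correct and follows essentially the same route as the paper: bound $|m^{(3)}_1|$ by $\norm{m^{(3)}}{\infty}\int_{\bbC}|W|\,d\zeta$ using the representation \eqref{N3.1}, control $\norm{m^{(3)}}{\infty}$ via the Neumann series from Lemma \ref{lemma:KW}, and invoke the $L^1$ bound $\int_{\bbC}|W|\lesssim (z_0t)^{-3/4}$ already established in the proof of Lemma \ref{lemma:N3.exp}. The paper's own proof is just a terser version of this same assembly.
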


\begin{proof}
From the representation formula \eqref{N3.1}, Lemma \ref{lemma:KW}, and the remarks following, we have
$$ \left|m^{(3)}_1(x,t) \right| \lesssim \int_\bbC |W(\zeta;x,t)| \, d\zeta. $$
In the proof of Lemma \ref{lemma:N3.exp}, we bounded this integral by $( z_0t)^{-3/4}$ modulo constants with the required uniformity.
\end{proof}
\section{Long-Time Asymptotics}
\label{sec:large-time}

We now put together our previous results and formulate the long-time asymptotics of $u(x,t)$  in Region I.  Undoing all transformations we carried out previously,  we get back $m$:
\begin{equation}
\label{N3.to.N}
m(z;x,t) = m^{(3)}(z;x,t) m^\RHP(z; z_0) \calR^{(2)}(z)^{-1} \delta(z)^{\sigma_3}.
\end{equation}
By stand inverse scattering theory, the coefficient of $z^{-1}$ in the large-$z$ expansion for $m(z;x,t)$ will be the solution to the MKdV equation:

\begin{lemma}
\label{lemma:N.to.NRHP.asy}
For $z=i\sigma$ and $\sigma \rarr +\infty$, the asymptotic relations
\begin{align}
\label{N.asy}
m(z;x,t) 				&=	I + \frac{1}{z} m_1(x,t) + o\left(\frac{1}{z}\right)\\
\label{N.RHP.asy}
m^\RHP(z;x,t)		&=	I + \frac{1}{z} m^\RHP_1(x,t) + o\left(\frac{1}{z}\right)
\end{align}
hold. Moreover,
\begin{equation}
\label{N.to.NRHP.asy} 
\left(m_1(x,t)\right)_{12} = \left(m^\RHP_1(x,t)\right)_{12} + \bigO{(z_0 t)^{-3/4}}.
\end{equation}
\end{lemma}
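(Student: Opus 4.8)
The plan is to read both asymptotic expansions directly off the chain of transformations recorded in \eqref{N3.to.N}, and then to isolate the $(1,2)$ entry, where the only surviving contributions are the localized Riemann--Hilbert problem and the small $\dbar$-correction. First I would restrict to the ray $z=i\sigma$ with $\sigma\to+\infty$ and simplify the factorization. For $\sigma$ large the point $i\sigma$ lies in the central sector $\Omega_2$ (above the segment $\Sigma^{(3)}_9$), where by construction $\calR^{(2)}(z)=I$; hence along this ray \eqref{N3.to.N} collapses to
$$
m(z;x,t) = m^{(3)}(z;x,t)\, m^\RHP(z;x,t)\, \delta(z)^{\sigma_3}.
$$
The expansion \eqref{N.RHP.asy} then follows from the integral representation \eqref{mLC}: since $i\sigma$ stays bounded away from $\Sigma^{(3)}$, expanding $(\zeta-z)^{-1} = -z^{-1} - \zeta z^{-2} - \cdots$ under the integral sign yields $m^\RHP = I + z^{-1} m^\RHP_1 + o(z^{-1})$ with
$$
m^\RHP_1(x,t) = -\frac{1}{2\pi i}\int_{\Sigma^{(2)}}\left((1-C_{w^{(3)}_\theta})^{-1}I\right) w^{(3)}_\theta(\zeta)\, d\zeta,
$$
while \eqref{N3.exp} supplies the expansion of $m^{(3)}$ along the same ray.

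Next I would compute the large-$z$ behaviour of $\delta$. Writing $\tfrac{z-z_0}{z+z_0} = 1 - \tfrac{2z_0}{z} + \mathcal{O}(z^{-2})$ and expanding $\chi(z)$ through $\tfrac{1}{\zeta-z} = -\tfrac1z(1 + \tfrac{\zeta}{z} + \cdots)$ in \eqref{chi}, one obtains $\delta(z) = 1 + \delta_1 z^{-1} + \mathcal{O}(z^{-2})$ for a scalar $\delta_1 = \delta_1(z_0)$, and consequently
$$
\delta(z)^{\sigma_3} = I + \frac{\delta_1}{z}\,\sigma_3 + \mathcal{O}(z^{-2}).
$$
The decisive observation is that this correction is \emph{diagonal}. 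Substituting the three expansions into the product and collecting the coefficient of $z^{-1}$ gives
$$
m_1(x,t) = m^{(3)}_1(x,t) + m^\RHP_1(x,t) + \delta_1\,\sigma_3,
$$
which establishes \eqref{N.asy} and identifies $m_1$.

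Finally I would read off the $(1,2)$ entry. Since $(\sigma_3)_{12}=0$, the $\delta$-term drops out entirely, leaving
$$
(m_1)_{12} = (m^{(3)}_1)_{12} + (m^\RHP_1)_{12}.
$$
By Proposition \ref{prop:N3.est}, estimate \eqref{N31.est} gives $|m^{(3)}_1|\lesssim (z_0t)^{-3/4}$ uniformly for $r$ in a bounded subset of $H^1(\bbR)$, so $(m^{(3)}_1)_{12} = \mathcal{O}\big((z_0t)^{-3/4}\big)$ and \eqref{N.to.NRHP.asy} follows. I do not anticipate a genuine analytic obstacle here: the only work is bookkeeping, namely verifying that $\calR^{(2)}=I$ along the chosen ray and that the diagonal factor $\delta^{\sigma_3}$ cannot pollute the off-diagonal entry at order $z^{-1}$. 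The one substantive estimate, $|m^{(3)}_1|\lesssim (z_0t)^{-3/4}$, has already been delivered by the $\dbar$-analysis of Section \ref{sec:dbar}, so this lemma is essentially a packaging of the preceding results.
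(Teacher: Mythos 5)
Your proposal is correct and follows essentially the same route as the paper: restrict to the ray $z=i\sigma$ where $\calR^{(2)}\equiv I$ in $\Omega_2$, expand the three factors in \eqref{N3.to.N}, observe that the $O(z^{-1})$ correction from $\delta^{\sigma_3}$ is diagonal and hence cannot contribute to the $(1,2)$ entry, and invoke \eqref{N31.est} to absorb $(m^{(3)}_1)_{12}$ into the error. The only (cosmetic) difference is that you write the diagonal correction as $\delta_1\sigma_3$ while the paper records it as $\mathrm{diag}(\delta_1,\delta_1^{-1})$; since both are diagonal the argument is unaffected.
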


\begin{proof}
By Lemma \ref{lemma:delta}(iii), 
the expansion
\begin{equation}
\label{delta.sigma.asy} 
\delta(z)^{\sigma_3} = \twomat{1}{0}{0}{1} + \frac{1}{z} \twomat{\delta_1}{0}{0}{\delta_1^{-1}} + \bigO{z^{-2}} 
\end{equation}
holds, with the remainder in \eqref{delta.sigma.asy}
 uniform in $r$ in a bounded subset of $H^{1}$. 
\eqref{N.asy} follows from \eqref{N3.to.N},  \eqref{N.RHP.asy}, the fact that $\calR^{(2)} \equiv I$ in $\Omega_2$,
and \eqref{delta.sigma.asy}.
Notice the fact that the 
diagonal matrix in \eqref{delta.sigma.asy} does not affect the $12$-component of $m$. Hence, for 
$z=i\sigma$,
$$ 
\left(m(z;x,t)\right)_{12} = 
		\frac{1}{z}\left(m^{(3)}_1(x,t)\right)_{12} + 
		\frac{1}{z}\left(m^\RHP_1(x,t)\right)_{12} + o\left(\frac{1}{z}\right)
$$
and result now follows from \eqref{N31.est}. 
\end{proof}

We arrive at the asymptotic formula in Region I:

\begin{proposition}
\label{lemma:N.RHP.asy}
The function 
\begin{equation}
\label{q.recon.bis}
u(x,t) = -2 \lim_{z \rarr \infty} z\, m_{12}(z;x,t)
\end{equation}
takes the form 
$$ u(x,t) = u_{as}(x,t) +  \mathcal{O}\left( t^{-1} +(z_0 t)^{-3/4}\right) $$
where 
$$u_{as}(x,t)=\left( \dfrac{\kappa}{3tz_0}\right)^{1/2}\cos \left(16tz_0^3-\kappa\log(192tz_0^3)+\phi(z_0) \right)$$
with
$$\phi(z_0)=\arg \Gamma(i\kappa)-\dfrac{\pi}{4}-\arg r(z_0)+\dfrac{1}{\pi}\int_{-z_0}^{z_0}\log\left( \dfrac{1-|r(\zeta)|^2}{1-|r(z_0)|^2} \right)\dfrac{d\zeta}{\zeta-z_0}$$
is obtained from \eqref{leading-I}.
\end{proposition}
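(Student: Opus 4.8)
The plan is to assemble the pieces already established in Sections \ref{sec:local} and \ref{sec:dbar}; the proposition is the final bookkeeping step in Region I. The starting point is the reconstruction formula \eqref{q.recon.bis}, which by standard inverse scattering theory identifies $u(x,t)$ with $2i$ times the $12$-entry of the coefficient $m_1(x,t)$ in the large-$z$ expansion \eqref{N.asy}. First I would invoke Lemma \ref{lemma:N.to.NRHP.asy}, which through \eqref{N.to.NRHP.asy} expresses $(m_1)_{12}$ in terms of $(m^\RHP_1)_{12}$ up to an error of order $(z_0 t)^{-3/4}$; this error is precisely the contribution $E_1$ of the pure $\dbar$-problem, controlled by Proposition \ref{prop:N3.est} via the bound \eqref{N31.est}. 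Thus the task reduces to evaluating $2i(m^\RHP_1)_{12}$, the $z^{-1}$ coefficient of the localized solution.

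Second, I would extract $(m^\RHP_1)_{12}$ from the localized RHP. Since $m^\RHP$ is given by the resolvent representation \eqref{mLC}, its $z^{-1}$ coefficient is the contour integral appearing in \eqref{recon-1}. The decomposition carried out in Section \ref{sec:local} splits this integral into the two separate crosses $\Sigma_{A'}$ and $\Sigma_{B'}$ centered at the stationary points $\mp z_0$, with the off-cross remainder absorbed into an $O(e^{-t})$ term and the coupling between the two crosses producing the error $O\!\left(c(z_0)/(\sqrt{z_0 t}\,\tau^{1/2})\right)$ through the resolvent-restriction estimate \eqref{error-1}. Rescaling each cross by the operators $N_A$, $N_B$ and passing to the limiting operators $A^0$, $B^0$ (using Lemmas \ref{delta-A} and \ref{delta-B }), the two contributions are computed explicitly in terms of parabolic cylinder functions exactly as in Deift--Zhou \cite[Sec.~4]{DZ93}. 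This is precisely the content of \eqref{leading-I}, whose first two terms constitute $2i(m^\RHP_1)_{12}$ and equal $u_{as}(x,t) + O\!\left(c(z_0)/(\sqrt{z_0 t}\,\tau^{1/2})\right)$.

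Combining the two steps then gives $u(x,t) = 2i(m_1)_{12} = 2i(m^\RHP_1)_{12} + O((z_0 t)^{-3/4}) = u_{as}(x,t) + O\!\left(c(z_0)/(\sqrt{z_0 t}\,\tau^{1/2})\right) + O((z_0 t)^{-3/4})$, which is the claimed asymptotic. The hard part is not this final assembly, which is essentially routine, but rather the separation of the two stationary points carried out in Section \ref{sec:local}: because we work only under the hypothesis $r\in H^1(\bbR)$ rather than the Schwartz assumption of \cite{DZ93}, the decoupling of the two crosses must be justified through the restriction-of-resolvents argument of \cite[Lemma~2.56]{DZ93}, and it is exactly this interaction that generates the extra error term $c(z_0)/(\sqrt{z_0 t}\,\tau^{1/2})$ absent from the single-stationary-point NLS analysis of \cite{DZ03}.
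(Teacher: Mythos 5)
Your proposal is correct and follows essentially the same route as the paper: it combines Lemma \ref{lemma:N.to.NRHP.asy} (which reduces $(m_1)_{12}$ to $(m^\RHP_1)_{12}$ up to the $\dbar$-error $O((z_0t)^{-3/4})$ from Proposition \ref{prop:N3.est}) with the evaluation \eqref{leading-I} of the localized RHP via the two-cross separation and the parabolic cylinder computation of Deift--Zhou. You also correctly attribute the extra error $c(z_0)/(\sqrt{z_0t}\,\tau^{1/2})$ to the decoupling of the two stationary points through the restriction-of-resolvents argument, which is exactly where the paper locates it.
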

See Section 4 in Deift-Zhou \cite{DZ93} for full details on the derivation for the explicit formula of $u_{as}$.
\section{The regions II-V}
We now turn to the study of the Regions II-V. We first study Region III, then Region II and finally Region IV and Region V. Our starting point is RHP Problem \ref{prob:DNLS.RH0} and the strategy of the proof is as follows:
\begin{itemize}
\item[1.] We scale the RHP Problem \ref{prob:DNLS.RH0} by a factor determined by the region.
\item[2.] We use $\dbar$-steepest descent to study the scaled RHP and obtain both leading term and error term.
\item[3.] We multiply by the scaling factor to get the asymptotic formula for the original RHP Problem \ref{prob:DNLS.RH0}.
\end{itemize}
\subsection{Region III}
In this region, $\tau\leq M$.
\subsubsection{$x<0$}
We first notice that
$$z_0=(\tau/t)^{1/3}\leq (M)^{1/3} t^{-1/3} \to 0 \quad \text{as}\,\, t \to \infty$$
so we do not need the lower/upper factorization given by \eqref{DNLS.V1} for $|z|<z_0$ and are left with the following upper/lower factorization:
\begin{equation}
\label{v-ul}
e^{-i\theta\ad\sigma_3}v(z)	=\Twomat{1}{-\overline{r(z)}  e^{-2i\theta}}{0}{1} \Twomat{1}{0}{r(z) e^{2i\theta}}{1},
						\quad z \in\bbR 
\end{equation}
Now we carry out the following scaling:
\begin{equation}
\label{scale}
z\to \zeta t^{-1/3}
\end{equation}
and \eqref{v-ul} becomes
\begin{equation}
\label{v-scale1}
\Twomat{1}{-\overline{r(\zeta t^{-1/3 } ) }  e^{-2i\theta (\zeta t^{-1/3 } ) } }{0}{1} \Twomat{1}{0}{r(\zeta t^{-1/3 } ) e^{2i\theta (\zeta t^{-1/3 } )  } }{1},
						\quad z \in\bbR 
\end{equation}
where
						$$ \theta (\zeta t^{-1/3 } )=4 \zeta^3+x \zeta t^{-1/3}=4( \zeta^3-3\tau^{2/3} \zeta).$$
Note that the stationary points now become $\pm z_0 t^{1/3}$.

We now study the scaled Riemann-Hilbert problem with jump matrix \eqref{v-scale1}. We will again perform contour deformation and write the solution as a product of solution to a $\dbar$-problem and a "localized" Riemann-Hilbert problem.

\begin{figure}[H]
\caption{$\Sigma-\text{Region-III}$}
\vskip 15pt
\begin{tikzpicture}[scale=0.7]
\draw[thick]		(5, 3) -- (4,2);						
\draw[->,thick,>=stealth] 		(2,0) -- (4,2);		
\draw[thick] 			(-2,0) -- (-4,2); 				
\draw[->,thick,>=stealth]  	(-5,3) -- (-4,2);	
\draw[->,thick,>=stealth]		(-5,-3) -- (-4,-2);							
\draw[thick]						(-4,-2) -- (-2,0);
\draw[thick,->,>=stealth]		(2,0) -- (4,-2);								
\draw[thick]						(4,-2) -- (5,-3);
\draw[thick]		(0,0)--(2,0);
\draw[thick,->,>=stealth] (-2,0) -- (0, 0);
\draw	[fill]							(-2,0)		circle[radius=0.1];	
\draw	[fill]							(2,0)		circle[radius=0.1];
\draw [dashed] (2,0)--(6,0);
\draw [dashed] (-6,0)--(-2,0);
\node[below] at (-2,-0.35)			{$-z_0 t^{1/3}$};
\node[below] at (2,-0.35)			{$z_0 t^{1/3}$};
\node[right] at (5,3)					{$\Sigma^{( \text{III} )}_1$};
\node[left] at (-5,3)					{$\Sigma^{( \text{III})}_2$};
\node[left] at (-5,-3)					{$\Sigma^{( \text{III})}_3$};
\node[right] at (5,-3)				{$\Sigma^{(\text{III})}_4$};
\node[above] at (4.5,0)           {$\Omega_1$};
\node[above] at (-4.5,0)           {$\Omega_2$};
\node[below] at (-4.5,0)           {$\Omega_3$};
\node[below] at (4.5,0)           {$\Omega_4$};
\end{tikzpicture}
\label{fig:contour-scale-1}
\end{figure}
For brevity, we only discuss the $\dbar$-problem in $\Omega_1$. In $\Omega_1$, we write 
$$\zeta= u+z_0 t^{1/3} +iv $$ 
then
\begin{align*}
\text{Re} (2i\theta (\zeta t^{-1/3})) &=8\left( -3(u+z_0 t^{1/3})^2 v +v^3 +3\tau^{2/3}v\right) \\
      & \leq 8 \left( -3u^2v -6 uv z_0 t^{1/3}  +v^3 \right)\\
      &\leq -16u^2 v
\end{align*}
	\begin{align*}
	R_1	&=	\begin{cases}
						\twomat{0}{0}{r(\zeta t^{-1/3} ) e^{2i\theta (\zeta t^{-1/3 } )  }  }{0}		
								&	z \in (z_0t^{1/3},\infty)\\[10pt]
								\\
						\twomat{0}{0}{r( z_0 ) e^{2i\theta (\zeta t^{-1/3 } )  }  }{0}	
								&	z	\in \Sigma_1
					\end{cases}
	\end{align*}
and the interpolation is given by 
$$r(z_0)+ \left( r\left( \text{Re}\zeta t^{-1/3} \right) -r(z_0)  \right) \cos 2\phi$$
So we arrive at the $\dbar$-derivative in $\Omega_1$ in the $\zeta$ variable:
\begin{equation}
\dbar R_1	= \left( {t^{-1/3}} r'\left( u t^{-1/3} \right) \cos 2\phi- 2\dfrac{ r(ut^{-1/3} )  -r(z_0)  }{  \left\vert \zeta-z_0 t^{1/3} \right\vert   } e^{i\phi} \sin 2\phi  \right) e^{2i\theta},
\end{equation}
\begin{equation}
\label{R1.bd1}
\left| \dbar R_1 e^{\pm 2i\theta}  \right| 	\lesssim\left( |  t^{-1/3} r'\left( u t^{-1/3} \right) | +\dfrac{\norm{r'}{L^2} }{ t^{1/3} |\zeta t^{-1/3}-z_0  |^{1/2} }\right) e^{-16u^2 v}.
\end{equation}
We will derive an exactly solvable model problem before dealing with the $\overline{\partial}-$ error estimates. We apply the fundamental theorem of calculus to get
$$r(\zeta t^{-1/3}) e^{2i\theta}-r(0)e^{2i\theta} \leq \left\vert  \dfrac{\zeta}{t^{1/6}} e^{8i (\zeta^3-3 \tau^{2/3}\zeta)}\right\vert.$$
Given the fact that $z_0 t^{1/3}=\tau^{1/3}\leq (M)^{1/3}$, we have that
$$\norm{  \dfrac{\zeta}{t^{1/6}} e^{8i (\zeta^3-3 \tau^{2/3}\zeta)}}{L^1\cap L^2\cap L^\infty} \lesssim t^{-1/6}.$$
So we can reduce the problem to a problem on the contour given by figure \ref{fig:contour-scale-1} with the following jump matrices:
\begin{align}
\label{matrices- Painleve }
e^{-i\theta\ad\sigma_3 } v^{(2)}(\zeta) &=e^{-4i\left(\zeta^3+\left( x/(4 t^{1/3}) \right)  \zeta \right) \ad\sigma_3 }  \twomat{1} {0} { {r(0)} }{1} , \quad \zeta\in \Sigma^{(\text{III})}_1\cup \Sigma^{(\text{III})}_2 \\
\nonumber
                                                                                                                   &=e^{-4i\left(\zeta^3+\left( x/(4 t^{1/3}) \right)  \zeta \right) \ad\sigma_3 } \twomat{1}{-\overline{r(0)}}{0}{1}, \quad \zeta\in \Sigma^{(\text{III})}_3 \cup \Sigma^{(\text{III})}_4\\
     \nonumber
                                                                                                                   &=                                                                                                              e^{-4i\left(\zeta^3+\left( x/(4 t^{1/3}) \right)  \zeta \right) \ad\sigma_3 } v(0),  \quad \zeta\in 
                                                                                                                   [-z_0t^{1/3}, z_0t^{1/3}].
\end{align}
Following the same argument on P. 357 of \cite{DZ93}, the RH problem is further reduced to one defined on the following contour which will be related to solve a
Painlev\'e II equation:
\begin{figure}[H]
\caption{$\Sigma$-Painlev\'e}
\vskip 15pt
\begin{tikzpicture}[scale=0.7]
\draw[thick]		(3, 3) -- (2,2);						
\draw[->,thick,>=stealth] 		(0,0) -- (2,2);		
\draw[thick] 			(0,0) -- (-2,2); 				
\draw[->,thick,>=stealth]  	(-3,3) -- (-2,2);	
\draw[->,thick,>=stealth]		(-3,-3) -- (-2,-2);							
\draw[thick]						(-2,-2) -- (0,0);
\draw[thick,->,>=stealth]		(0,0) -- (2,-2);								
\draw[thick]						(2,-2) -- (3,-3);
\draw	[fill]							(0,0)		circle[radius=0.1];	
\node [below] at (0,0) {0};
\draw [dashed] (0,0)--(4,0);
\draw [dashed] (-4,0)--(0,0);
\node[right] at (3,3)					{$\Sigma^{(\text{P})}_1$};
\node[left] at (-3,3)					{$\Sigma^{(\text{P})}_2$};
\node[left] at (-3,-3)					{$\Sigma^{(\text{P})}_3$};
\node[right] at (3,-3)				{$\Sigma^{(\text{P})}_4$};
\end{tikzpicture}
\label{fig: Painleve}
\end{figure}
\begin{align}
\label{matrices- Painleve}
e^{-i\theta\ad\sigma_3 } v^{(2)}(\zeta) &=e^{-4i\left(\zeta^3+\left( x/(4 t^{1/3}) \right)  \zeta \right) \ad\sigma_3 }  \twomat{1} {0} { {r(0)} }{1} , \quad \zeta\in \Sigma^{(\text{P})}_1\cup \Sigma^{(\text{P})}_2 \\
\nonumber
                                                                                                                   &=e^{-4i\left(\zeta^3+\left( x/(4 t^{1/3}) \right)  \zeta \right) \ad\sigma_3 } \twomat{1}{-\overline{r(0)}}{0}{1}, \quad \zeta\in \Sigma^{(\text{P})}_3 \cup \Sigma^{(\text{P})}_4
\end{align}
which is exactly solvable

Let $P$ is a solution of the Painlev\'e $\text{II}$ equation$$P''(s)-sP(s)-2P^3(s)=0$$
	determined by $r(0)$. Then the reduced factorization problem above is related to the Painlev\'e II equation
	by an isomonodromy problem associated to the linear problem
	\[
	\frac{d\psi}{dz}=\left(\begin{array}{cc}
	-4iz^{2}-is-2iP^{2} & 4Piz-2P'\\
	-4Piz-2P' & 4iz^{2}+is+2iP^{2}
	\end{array}\right)\psi.
	\]
	with $s=x/t^{1/3}$ and as $\zeta\to\infty$:
$$\Psi_{i}(s, \zeta)\sim e^{-( [4i/3]\zeta^3+is\zeta  )\sigma_3}. $$
	Here over six sections (cf. \cite[Figure 5.7]{DZ93}), one has the jump relations
	\[
	\psi_{i+1}\left(s,z\right)=\psi_{i}\left(s,z\right)S_{i},\ 1\leq i\leq6,\ \psi_{7}=\psi_{1}
	\]
	where $S_{i}$'s are determined by three parameters $\left(\mathrm{p},\mathrm{q},\mathrm{r}\right)$
	satisfying 
	\[
	\mathrm{r}=\mathrm{p}+\mathrm{q}+\mathrm{pqr}.
	\]
In our setting, we have that 	
	\[
	\mathrm{p}=r\left(0\right),\ \mathrm{q}=-r\left(0\right),\ \mathrm{r}=\frac{\mathrm{p+\mathrm{q}}}{1-\mathrm{pq}}=0.
	\]
Then one can reconstruct $P$ from $\psi$ (\cite[(5.44)]{DZ93}):
$$P=P(x/t^{1/3})=\lim_{\zeta\to \infty} 2i \zeta \left( \Psi e^{ \left( (4i/3) \zeta^3   +is\zeta   \right)   \sigma} -I\right)_{12}.$$
Since this isomonodromy problem is standard, we refer Deift-Zhou \cite[Sec.5]{DZ93} for full details.

We then proceed as in the previous section and study the integral equation related to the $\dbar$ problem. Setting $z=\alpha+i\beta$ and $\zeta=(u+z_0t^{1/3})+iv$, the region $\Omega_1$ corresponds to $u\geq v \geq 0 $. We decompose the integral operator into three parts:
$$
 \int_{\Omega_1}  \dfrac{1}{|z-\zeta|} |W(\zeta)| \, d\zeta  \lesssim  I_1 + I_2 
$$
where
\begin{align*}
I_1 	&=	\int_0^\infty \int_v^\infty \dfrac{1}{|z-\zeta|} \left\vert t^{-1/3} r'\left( u t^{-1/3} \right)   \right\vert e^{-16u^2v} \, du \, dv \\[5pt]
I_2	&=	\int_0^\infty \int_v^\infty \frac{1}{|z-\zeta|}  \dfrac {1} { t^{1/3} \left| u t^{-1/3} +ivt^{-1/3}  \right|^{1/2} } e^{-16u^2v} \, du \, dv\\[5pt].
\end{align*}
We first note that 
$$ \left( \int_\bbR \left\vert t^{-1/3} r'\left( u t^{-1/3} \right)   \right\vert^2 du \right)^{1/2} = t^{-1/6}\norm{r'}{L^2}$$
Using this and the following estimate from \cite[proof of Proposition D.1]{BJM16} 
\begin{equation}
\label{BJM16.bd1}
\norm{\frac{1}{|z-\zeta|}}{L^2(v,\infty)} \leq \frac{\pi^{1/2}}{|v-\beta|^{1/2}}.
\end{equation}
 and \textit{Cauchy-Schwarz}'s inequality on the $u$-integration we may bound $I_1$  by constants times
$$
 t^{-1/6} \norm{r'}{2} \int_0^\infty \frac{1}{|v-\beta|^{1/2}} e^{-v^3} \, dv \lesssim t^{-1/6}.
$$
For $I_2$,  we estimate
\begin{align*}
\norm{ \dfrac {1} { t^{1/3} \left| u t^{-1/3} +ivt^{-1/3}  \right|^{1/2} } }{ L^p (v, \infty) } &\leq  \left( \int_v ^\infty t^{-p/3} \left( \dfrac{  1  }{  (u t^{-1/3} )^2 + (v t^{-1/3})^2  } \right)^{p/4} du \right)^{1/p}\\ 
  &=t^{-1/6} \left(  \int_{v }^\infty \left( \dfrac{  1  }{  u^2 + v^2  } \right)^{p/4} du \right)^{1/p}\\
   &\leq c t^{-1/6}v^{1/p-1/2}.
   \end{align*}
Now by \eqref{BJM16.bd1} and an application of the \textit{H\"older}'s inequality with $P>2$ we get
\begin{align*}
|I_2|  &\leq  \int_0^\infty \norm{ \dfrac {1} { t^{1/3} \left| u t^{-1/3} +ivt^{-1/3}  \right|^{1/2} } }{ L^p (v, \infty) }  \norm{ \dfrac{1}{|z-\zeta|}}{L^q (v, \infty)} e^{-16v^3} dv\\
   &\leq c \int_0^\infty t ^{-1/6 }v^{1/p-1/2} \left\vert v-\beta \right\vert^{1/q-1} e^{-16v^3} dv\\
   &\leq c t ^{-1/6 }.
\end{align*}
This proves that 
$$
 \int_{\Omega_1}  \dfrac{1}{|z-\zeta|} |W(\zeta)| \, d\zeta  \lesssim  t ^{-1/6 } .
$$
We now show that 
\begin{equation}
    \label{error:II}
     \int_{\Omega_1} |W(\zeta)| \, d\zeta  \lesssim  t ^{ -1/6 } 
\end{equation}
Again we decompose the integral above into two parts
\begin{align*}
I_1 	&=	\int_0^\infty \int_v^\infty  \left\vert t^{-1/3} r'\left( u t^{-1/3} \right)   \right\vert e^{-16u^2v} \, du \, dv \\[5pt]
I_2	&=	\int_0^\infty \int_v^\infty \dfrac {1} { t^{1/3} \left| u t^{-1/3} +ivt^{-1/3}  \right|^{1/2} } e^{-16u^2v} \, du \, dv.\\[5pt]
\end{align*}
By \textit{Cauchy-Schwarz}'s inequality:
\begin{align*}
I_1 &\leq \int_0^\infty t^{-1/6} \norm{r'}{2} \left(  \int_v^\infty e^{-16u^2 v} du\right)^{1/2} dv\\
 &\leq c t^{-1/6} \int_0^\infty \dfrac{e^{-16 v^3} }{\sqrt[4]{v}}dv\\
 &\leq c t^{-1/6}.
\end{align*}
By  \textit{H\"older}'s  inequality:
\begin{align*}
I_2 &\leq c t^{-1/6 } \int_0^\infty  v^{1/p-1/2} \left( \int_v^\infty e^{-16qu^2 v} du \right)^{1/q} dv\\
     &\leq c t^{-1/6 } \int_0^\infty v^{3/2p-1} e^{-16v^3}dv\\
     &\leq c t^{-1/6 }. 
\end{align*}

We now follows the argument of Section \ref{sec:large-time} and \cite[Section 5]{DZ93} to obtain the long-time asymptotic formula in Region III ($x<0$):
\begin{align}
\label{asym-III}
u(x,t)&= \lim_{z\to \infty}-2z m_{12}(x, t; z)\\
\nonumber
 &= \lim_{\zeta\to \infty}-2 t^{-1/3}\zeta m_{12}(x, t; \zeta)\\
 \nonumber
&=\dfrac{1}{(3t)^{1/3}}P\left( \dfrac{x}{ (3t)^{1/3} } \right)+\mathcal{O} \left(  t^{ -1/2 } \right)
\end{align}
where  $P$ is a solution of the Painlev\'e II equation 
$$P''(s)-sP(s)-2P^3(s)=0$$ determined by $r(0)$.
\subsubsection{$x>0$}
In this case, we have the stationary points 
$$\pm z_0=\pm \sqrt{\dfrac{-x }{ 12 t }}=\pm i  \sqrt{\dfrac{ |x| }{ 12 t }}$$
stay on the imaginary axis. Given the signature table of $\theta$ function (see \cite[Figure 5.9]{DZ93}), we again perform the scaling 
$$z\to \zeta t^{-1/3}$$
and contour deformation 
\begin{figure}[H]
\vskip 15pt
\begin{tikzpicture}[scale=0.7]
\draw[thick]		(3, 3) -- (2,2);						
\draw[->,thick,>=stealth] 		(0,0) -- (2,2);		
\draw[thick] 			(0,0) -- (-2,2); 				
\draw[->,thick,>=stealth]  	(-3,3) -- (-2,2);	
\draw[->,thick,>=stealth]		(-3,-3) -- (-2,-2);							
\draw[thick]						(-2,-2) -- (0,0);
\draw[thick,->,>=stealth]		(0,0) -- (2,-2);								
\draw[thick]						(2,-2) -- (3,-3);
\draw	[fill]							(0,0)		circle[radius=0.1];	
\node [below] at (0,0) {0};
\draw [dashed] (0,0)--(4,0);
\draw [dashed] (-4,0)--(0,0);
\node[below] at (0,2)			{$z_0 t^{1/3}$};
\node[above] at (0,-2)			{$-z_0 t^{1/3}$};
\node[right] at (3,3)					{$\Sigma^{(\text{III+})}_1$};
\node[left] at (-3,3)					{$\Sigma^{(\text{III+})}_2$};
\node[left] at (-3,-3)					{$\Sigma^{(\text{III+})}_3$};
\node[right] at (3,-3)				{$\Sigma^{(\text{III+})}_4$};
\node[above] at (2.5,0)           {$\Omega_1$};
\node[above] at (-2.5,0)           {$\Omega_2$};
\node[below] at (-2.5,0)           {$\Omega_3$};
\node[below] at (2.5,0)           {$\Omega_4$};
\end{tikzpicture}
\label{fig:contour-scale}
\end{figure}
We again only discuss the $\dbar$-problem in $\Omega_1$. In $\Omega_1$, we write 
$$\zeta= u+iv $$ 
then
\begin{align*}
\text{Re} (i\theta (\zeta t^{-1/3})) &=8\left( -3u^2 v +v^3 -xvt^{-1/3}\right) \\
      & \leq 8 \left( -3u^2v +u^2v \right)\\
      &\leq -16u^2 v.
\end{align*}
To apply the $\dbar$ method, we define
	\begin{align*}
	R_1	&=	\begin{cases}
						\twomat{0}{0}{r(\zeta t^{-1/3} ) e^{2i\theta (\zeta t^{-1/3 } )  }  }{0}		
								&	z \in (0, \infty)\\[10pt]
								\\
						\twomat{0}{0}{r( 0 ) e^{2i\theta (\zeta t^{-1/3 } )  }  }{0}	
								&	z	\in \Sigma^{(\text{III+})}_1
					\end{cases}
	\end{align*}
and the interpolation is given by 
$$r(0)+ \left( r\left( \text{Re}\zeta t^{-1/3} \right) -r(0)  \right) \cos 2\phi.$$
We can now repeat the analysis in the case above for $x<0$ and obtain the same long time asymptotics as \eqref{asym-III}.
\subsection{Region II} We follow the strategy of the previous subsection. We now scale
$$z\to \zeta z_0$$ and the jump matrix becomes
\begin{equation}
\label{v-scale}
\Twomat{1}{-\overline{r(\zeta z_0 ) }  e^{-2i\theta (\zeta z_0 ) } }{0}{1} \Twomat{1}{0}{r(\zeta z_0) e^{2i\theta (\zeta z_0 )  } } {1},
						\quad z \in\bbR 
\end{equation}
where
						$$ \theta (\zeta z_0 )=4 \tau \zeta^3+x \zeta z_0=4 \tau ( \zeta^3-3 \zeta).$$
\begin{figure}[H]
\caption{$\Sigma-\text{Region-II}$}
\vskip 15pt
\begin{tikzpicture}[scale=0.7]
\draw[thick]		(5, 3) -- (4,2);						
\draw[->,thick,>=stealth] 		(2,0) -- (4,2);		
\draw[thick] 			(-2,0) -- (-4,2); 				
\draw[->,thick,>=stealth]  	(-5,3) -- (-4,2);	
\draw[->,thick,>=stealth]		(-5,-3) -- (-4,-2);							
\draw[thick]						(-4,-2) -- (-2,0);
\draw[thick,->,>=stealth]		(2,0) -- (4,-2);								
\draw[thick]						(4,-2) -- (5,-3);
\draw[thick]		(0,0)--(2,0);
\draw[thick,->,>=stealth] (-2,0) -- (0, 0);
\draw	[fill]							(-2,0)		circle[radius=0.1];	
\draw	[fill]							(2,0)		circle[radius=0.1];
\draw [dashed] (2,0)--(6,0);
\draw [dashed] (-6,0)--(-2,0);
\node[below] at (-2,-0.35)			{$-1$};
\node[below] at (2,-0.35)			{$1$};
\node[right] at (5,3)					{$\Sigma^{(\text{II})}_1$};
\node[left] at (-5,3)					{$\Sigma^{(\text{II})}_2$};
\node[left] at (-5,-3)					{$\Sigma^{(\text{II})}_3$};
\node[right] at (5,-3)				{$\Sigma^{(\text{II})}_4$};
\node[above] at (4.5,0)           {$\Omega_1$};
\node[above] at (-4.5,0)           {$\Omega_2$};
\node[below] at (-4.5,0)           {$\Omega_3$};
\node[below] at (4.5,0)           {$\Omega_4$};
\end{tikzpicture}
\label{fig:contour-scale-III}
\end{figure}		
For brevity	, we again only discuss the $\dbar$-problem in $\Omega_1$. In $\Omega_1$, we write 
$$\zeta= u+1 +iv $$ 
then
\begin{align}
\label{Retheta: II}
\text{Re} (2i\theta (\zeta z_0)) &=8\tau \left( -3(u+1)^2 v +v^3 +3v\right) \\
\nonumber
      & \leq 8\tau \left( -3u^2v -6 uv  +v^3 \right)\\
      \nonumber
      &\leq -16 \tau u v
\end{align}
	\begin{align*}
	R_1	&=	\begin{cases}
						\twomat{0}{0}{r(\zeta z_0 ) e^{2i\theta (\zeta z_0 )  }  }{0}		
								&	z \in ( 1, \infty)\\[10pt]
								\\
						\twomat{0}{0}{r( z_0 ) e^{2i\theta (\zeta z_0 )  }  }{0}	
								&	z	\in \Sigma^{(\text{II})}_1
					\end{cases}
	\end{align*}
and the interpolation is given by 
$$r(z_0)+ \left( r\left( \text{Re}\zeta z_0 \right) -r(z_0)  \right) \cos 2\phi.$$
So we arrive at the $\dbar$-derivative in $\Omega_1$ in the $\zeta$ variable:
\begin{equation}
\dbar R_1	= \left( z_0 r'\left( u z_0\right) \cos 2\phi- 2\dfrac{ r(uz_0)  -r(z_0)  }{  \left\vert \zeta-1\right\vert   } e^{i\phi} \sin 2\phi  \right) e^{2i\theta   }
\end{equation}
\begin{equation}
\label{R1.bd2}
\left| \dbar R_1 e^{\pm 2i\theta}  \right| 	\lesssim\left( | z_0 r'\left( u z_0 \right) | +\dfrac{ z_0 \norm{r'}{L^2} }{  |\zeta z_0 -z_0  |^{1/2} }\right) e^{-16\tau u v}.
\end{equation}		
We now replace $t^{-1/3}$ and $e^{-16u^2 v}$ in the previous subsection with $z_0$ and  $e^{-16\tau u v}$ respectively and conclude that
\begin{equation}
\label{est: Omega-III}
     \int_{\Omega_1} |W(\zeta)| \, d\zeta  \lesssim  z_0^{1/2} \tau^{-3/4}. 
\end{equation}
and arrive at the following long-time asymptotics:
\begin{align}
\label{asym-II}
u(x,t) &=\dfrac{1}{(3t)^{1/3}}P\left( \dfrac{x}{ (3t)^{1/3} } \right)+\mathcal{O} \left( \tau^{-3/4}  z_0^{ 3/2 } \right) \\
      &=\dfrac{1}{(3t)^{1/3}}P\left( \dfrac{x}{ (3t)^{1/3} } \right)+\mathcal{O} \left( (z_0t) ^{-3/4} \right).
\end{align}
\begin{remark}
    \label{rmk:I overlap}
    In the overlap between Region II and III, we take  $\text{Re} (2i\theta (\zeta z_0))<-16\tau u^2v$ in \eqref{Retheta: II} and the corresponding estimate in \eqref{est: Omega-III} becomes
    \begin{equation}
        \int_{\Omega_1} |W(\zeta)| \, d\zeta  \lesssim  z_0^{1/2} \tau^{-1/2} 
    \end{equation}
    and the resulting asymptotics in Region II is:
   \begin{equation}
  u(x,t) =\dfrac{1}{(3t)^{1/3}}P\left( \dfrac{x}{ (3t)^{1/3} } \right)+\mathcal{O} \left( t^{-1/2}\right)     
   \end{equation}
\end{remark}
which matches up with \eqref{asym-III}.

\subsection{Region IV}
\begin{figure}[H]
\caption{$\Sigma-\text{Region-IV}$}
\vskip 15pt
\begin{tikzpicture}[scale=0.7]
\draw[thick]		(2, 3) -- (1,2);						
\draw[->,thick,>=stealth] 		(0,1) -- (1,2);		
\draw[thick] 			(-1,2) -- (0, 1); 				
\draw[->,thick,>=stealth]  	(-2,3) -- (-1,2);	
\draw[->,thick,>=stealth]		(-2,-3) -- (-1,-2);							
\draw[thick]						(-1,-2) -- (0, -1);
\draw[thick,->,>=stealth]		(2, -3) -- (1,-2);								
\draw[thick]						(1,-2) -- (0,-1);
\draw[dashed]		(0,0)--(4,0);
\draw[dashed] (-4,0) -- (0, 0);
\draw [dashed] (0,1)--(2,1);
\draw [dashed] (0,1)--(-2,1);
\node[right] at (2,3)					{$\Sigma^{(\text{IV})}_1$};

\node[right] at (2,-3)				{$\Sigma^{(\text{IV})}_2$};
\node[below] at (0,3)           {$\Omega_1$};
\node[above] at (2.5,0)           {$\Omega_2$};
\node[below] at (2.5,0)           {$\Omega_3$};
\node[above] at (0,-3)           {$\Omega_4$};
\node[below] at (0,1.1)      {$i\eta $};
\node[above] at (0,-1.1)      {$-i\eta$};
\node[above] at (-2, 0)  {(1)};
\node[above] at (1, 1)  {(2)};
\node[above] at (-1, 1)  {(3)};
\end{tikzpicture}
\label{fig:contour-scale-IV}
\end{figure}	
In this region, we have 
$$\tau=\left( \dfrac{x}{12t^{1/3}} \right)^{3/2}>(M)^{-1}>0$$
and choose a constant $\eta$ such that  $0<\eta< (M)^{-1/3}$
The contour deformation is given above and we carry out the same scaling 
$$z\to \zeta t^{-1/3}.$$
We extend $r$ to Part (1) of  $\Omega_2$ by setting $r=r( \text{Re}\zeta t^{-1/3} )$. Also in this region, 
\begin{align*}
\text{Re} (2i\theta (\zeta t^{-1/3})) &=8\left( -3 u^2v +v^3 \right) -2 (x t^{-1/3})v\\
& = 8 \left( -3u^2v +v^3 \right)-24 \tau^{2/3}v\\
& =-24u^2 v-16\tau^{2/3} v.
\end{align*}
We now integrate and find that
\begin{align}
\label{error-(1)}
 \int_{(1)} \left\vert t^{-1/3} r'(u t^{-1/3}) e^{2i\theta(\zeta t^{-1/3})} \right\vert d\zeta
 & =\int_0^\eta \int_{-\infty}^{\infty} \left\vert t^{-1/3} r'(u t^{-1/3}) e^{-24u^2v-16\tau^{2/3}v} \right\vert dudv\\
 \nonumber
 & \lesssim t^{-1/6}\tau^{-1/2}.
\end{align}
 In Part (2) , we write 
$$\zeta= u+i(v+\eta) $$ 
then
\begin{align}
\label{phase-iv}
\text{Re} (2i\theta (\zeta t^{-1/3})) &=8\left( -3 u^2 (v+\eta) +(v+\eta)^3 \right) -2 (x t^{-1/3}) (v+\eta )\\
\nonumber
      & \leq 8 \left( -3u^2v -3u^2 \eta  +v^3 +3 v^3\eta +3v\eta^2 +\eta^3 \right)\\
      \nonumber
      &\quad -24\tau^{2/3}(v+\eta)\\
      \nonumber
      &\leq -16(u^2 v+\tau^{2/3}\eta).
\end{align}
For the $\overline{\partial}$ problem, we set
	\begin{align*}
	R_1	&=	\begin{cases}
						\twomat{0}{0}{r(\zeta t^{-1/3} ) e^{2i\theta (\zeta t^{-1/3 } )  }  }{0}		
								&	z \in \mathbb{R}\\[10pt]
								\\
						\twomat{0}{0}{r( 0 ) e^{2i\theta (\zeta t^{-1/3 } )  }  }{0}	
								&	z	\in \Sigma^{(\text{IV})}_1
					\end{cases}
	\end{align*}
and the interpolation is given by 
$$r(0)+ \left( r\left( \text{Re}\zeta t^{-1/3} \right) -r(0)  \right) \cos 2\phi.$$
So we arrive at the $\dbar$-derivative in $\Omega_1$ in the $\zeta$ variable:
\begin{equation}
\dbar R_1	= \left( {t^{-1/3}} r'\left( u t^{-1/3} \right) \cos 2\phi- 2\dfrac{ r(ut^{-1/3} )  -r(0)  }{  \left\vert \zeta-i\eta \right\vert   } e^{i\phi} \sin 2\phi  \right) e^{2i\theta}.
\end{equation}
\begin{align*}
\label{R1.bd3}
\left| \dbar R_1 e^{\pm 2i\theta}  \right| 	&\lesssim\left( |  t^{-1/3} r'\left( u t^{-1/3} \right) | +\dfrac{\norm{r'}{L^2} }{ t^{1/3} |u t^{-1/3} +ivt^{-1/3}  |^{1/2} }\right) \\
            & \quad \times e^{-16(u^2 v + \tau^{2/3}\eta)}
\end{align*}
Following the same procedure, we  show that 
\begin{equation}
\label{error-(2)}
\int_{(2)} |W(\zeta)| \, d\zeta  \lesssim  t ^{-1/6 } e^{-16\tau^{2/3}\eta}
\end{equation}
 which is the error term resulting from the $\dbar$ estimate.  We can now combine \eqref{error-(1)} and \eqref{error-(2)} and follow the argument in Section \ref{sec:large-time} and \cite[Section 5]{DZ93} to obtain the long-time asymptotic formula in Region IV:
 \begin{equation}
\label{asym-IV}
u(x,t)=\dfrac{1}{(3t)^{1/3}}P\left( \dfrac{x}{ (3t)^{1/3} } \right)+\mathcal{O} \left( (t\tau)^{-1/2}+ \dfrac{e^{-16\tau^{2/3}\eta}} {t^{ 1/2 } }\right)
\end{equation}
where $P$ is a solution of the  Painlev\'e II equation 
$$P''(s)-sP(s)-2P^3(s)=0$$ determined by $r(0)$.
\subsection{Region V}
\begin{figure}[H]
\caption{$\Sigma-\text{Region-V}$}
\vskip 15pt
\begin{tikzpicture}[scale=0.7]

\draw[dashed]		(0,0)--(4,0);
\draw[dashed] (-4,0) -- (0, 0);
\draw	[fill]							(0, 2.5)		circle[radius=0.05];	
\draw	[fill]							(0,-2.5)		circle[radius=0.05];
\draw [thick] (0,1)--(2,1);
\draw [thick,->,>=stealth] (-2,1)--(0,1);
\draw [thick] (0,-1)--(2,-1);
\draw [thick,->,>=stealth] (-2,-1)--(0,-1);
\node[left] at (-2,1)					{$\Sigma^{(V)}_1$};
\node[right] at (4,0)					{$\bbR$};
\node[left] at (-2,-1)				{$\Sigma^{(V)}_2$};
\node[left] at (0,2.5)           {$z_0$};
\node[above] at (2.5,0)           {$\Omega_1$};
\node[below] at (2.5,0)           {$\Omega_2$};
\node[left] at (0,-2.5)           {$-z_0$};
\node[above] at (0,1.1)      {$ih$};
\node[below] at (0,-1)      {$-ih$};

\end{tikzpicture}
\label{fig:contour-scale-IV}
\end{figure}	
Given $|z_0|>M^{-1}$, let $h=M^{-1}/2$, then
we can directly read off that for $z=u+iv\in \Omega_1$
\begin{align}
\operatorname{Re}(2 i \theta(z)) &=2 t\left(4\left(-3 u^{2} v+v^{3}\right)-\frac{x}{t} v\right) \\
& \leq-24 u^{2} v t+2\left(4 h^{2}-\frac{x}{t}\right) v t \\
& \leq-24 u^{2} v t-2 c v t
\end{align}
So we simply factorize 
\begin{align*}
    \label{s}
    e^{-i\theta \ad \sigma_3} v(z)	=\twomat{1}{-\rbar  e^{-2i\theta}}{0}{1}
					\twomat{1}{0}{r e^{2i\theta}}{1}
\end{align*}
and deform $\bbR$ to $\Sigma^{(5)}_1$ and $\Sigma^{(5)}_2$. We only study the case of $\Omega_1$
It is obvious that $r(u)e^{2i\theta}$ decays exponentially on $\Sigma^{(5)}_1$, so we are only left with the error term
\begin{align*}
\int_{\Omega_{2}}\left|r^{\prime}(u) e^{2 i \theta(z)}\right| d z &=\int_{0}^{\eta} \int_{-\infty}^{\infty}\left|r^{\prime}(u) e^{-\left(24 u^{2} v+2 c v\right) t}\right| d u d v \\
&<\int_{0}^{\infty} \frac{e^{-2 c v t}}{\sqrt{v t}} d v \\
& \leq t^{-1}
\end{align*}
and the analysis in $\Omega_2$ is identical. So we obtain in Region V
\begin{equation}
\label{asym-V}
u(x,t)=\mathcal{O} \left( t^{-1}\right). 
\end{equation}

\begin{remark}
    \label{extention}
    If we instead let the initial condition $u_0\in H^{2,s}(\bbR)$ where $s>1/2$, then following a similar and simpler argument as in \cite[section 3]{CLL}, we can deduce that the reflection coefficient $r\in H^{s'}(\bbR)$ for all $1/2<s'<s$. Then replacing $r(z)$ by the convolution form as given in  \cite[(5.15)]{CLL}, we can deduce that the resulting error terms in \eqref{W.L1.est},\eqref{asym-III} \eqref{asym-II}, \eqref{asym-IV} and \eqref{asym-V} become
    $$
    \mathcal{O}\left( (z_0 t)^{-(1+2s')/4} \right), ~ \mathcal{O}\left( t^{-(2+s')/6} \right), ~ \mathcal{O}\left( (z_0 t)^{-(1+2s')/4} \right), ~ \mathcal{O} \left( (t\tau)^{-(2+s')/6}\right), ~\mathcal{O}\left( t^{-s'} \right).
    $$
\end{remark}

\section{Global approximation of solutions}
\label{sec: approx}
The goal of this section is to extend our long-time asymptotics given by Theorem \ref{thm:main1} to the MKdV equation with rougher initial data.  Three important spaces are { $H^{1}$, $H^{\frac{1}{4}}$ and $L^2$}. In $H^{1}$, the MKdV equation has certain conserved quantities (cf. Subsection \ref{subsec:H1}). For $H^{\frac{1}{4}}$, this space is the lowest regularity that the solution can be constructed by iterations (cf. Theorem \ref{thm:KVPlocal} and Subsection \ref{H1/4}). { Finally, in $L^2$, the mKdV enjoys the conservation of mass. }  We will show that the long-time asymptotics remain valid in these spaces after we introduce decay at $\pm\infty$.

We first sketch the local
existence and uniqueness of the strong solution in $H^{s}$ for $s\ge\frac{1}{4}$.
We mainly follow Kenig-Ponce-Vega \cite{KPV} and Linares-Ponce \cite{LP}. 

First of all, we define the solution operator to the linear Airy equation
by
\[
W\left(t\right)u_{0}=e^{-t\partial_{xxx}}u_{0}.
\]
In other words, using the Fourier transform, one has
\[
\mathcal{F}_{x}\left[W\left(t\right)u_{0}\right]\left(\xi\right)=e^{it\xi^{3}}\hat{u}_{0}\left(\xi\right).
\]
\begin{definition}

The \emph{strong solution} is defined in the following integral sense:
we say the function $u\left(x,t\right)$ is a strong solution
in $H^{s}\left(\mathbb{R}\right)$ to
\begin{equation}
\partial_{t}u+\partial_{xxx}u - 6u^{2}\partial_{x}u=0,\quad u\left(0\right)=u_{0}\in H^{s}\left(\mathbb{R}\right)\label{eq:IVP}
\end{equation}
if and only if $u\in C\left(I,H^{s}\left(\mathbb{R}\right)\right)$
satisfies
\begin{equation}
u=W\left(t\right)u_{0} + \int_{0}^{t}W\left(t-s\right)\left(6u^{2}\partial_{x}u\left(s\right)\right)\,ds.\label{eq:mild}
\end{equation}

\end{definition}
We also define
\[
\mathcal{D}_{x}^{s}h\left(x\right)=\mathcal{F}^{-1}\left[\left|\xi\right|^{s}\hat{h}\left(\xi\right)\right]\left(x\right).
\]
Then with notations introduced above, we have the classical local well-posedness results due to Kenig-Ponce-Vega \cite{KPV}.
\begin{theorem}[Kenig-Ponce-Vega]
\label{thm:KVPlocal}Let $s\ge\frac{1}{4}$. Then
for any $u_{0}\in H^{s}\left(\mathbb{R}\right)$ there is $T=T\left(\left\Vert \mathcal{D}_{x}^{\frac{1}{4}}u_{0}\right\Vert _{L^{2}}\right)\sim\left\Vert \mathcal{D}_{x}^{\frac{1}{4}}u_{0}\right\Vert _{L^{2}}^{-4}$
such that there exists a unique strong solution $u\left(t\right)$
to the initial value problem
\[
\partial_{t}u+\partial_{xxx}u - 6u^{2}\partial_{x}u=0,\,u\left(0\right)=u_{0}
\]
satisfying
\begin{equation}
u\in C\left(\left[-T,T\right]:H^{s}\left(\mathbb{R}\right)\right)\label{eq:E1}
\end{equation}
\begin{equation}
\left\Vert \mathcal{D}_{x}^{s}\partial_{x}u\right\Vert _{L_{x}^{\infty}\left(\mathbb{R}:L_{t}^{2}\left[-T,T\right]\right)}<\infty,\label{eq:E2}
\end{equation}
\begin{equation}
\left\Vert \mathcal{D}_{x}^{s-\frac{1}{4}}\partial_{x}u\right\Vert _{L_{x}^{20}\left(\mathbb{R}:L_{t}^{\frac{5}{2}}\left[-T,T\right]\right)}<\infty,\label{eq:E3}
\end{equation}
\begin{equation}
\left\Vert \mathcal{D}_{x}^{s}u\right\Vert _{L_{x}^{5}\left(\mathbb{R}:L_{t}^{10}\left[-T,T\right]\right)}<\infty,\label{eq:E4}
\end{equation}
and
\begin{equation}
\left\Vert u\right\Vert _{L_{x}^{4}\left(\mathbb{R}:L_{t}^{\infty}\left[-T,T\right]\right)}<\infty.\label{eq:E5}
\end{equation}
Moreover, there exists a neighborhood $\mathcal{N}$ of $u_{0}$ in
$H^{s}\left(\mathbb{R}\right)$ such that the solution map: $\tilde{u}_{0}\in\mathcal{N}\longmapsto\tilde{u}$
is smooth with respect to the norms given by \eqref{eq:E1}-\eqref{eq:E5}.
\end{theorem}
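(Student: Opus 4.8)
The plan is to solve the integral equation \eqref{eq:mild} by a contraction mapping argument in a space whose norm encodes precisely the mixed space--time bounds \eqref{eq:E1}--\eqref{eq:E5}. Concretely, for a lifespan $T>0$ to be chosen and $s\ge\frac{1}{4}$, I would introduce the Banach space $X_T^s$ of functions $u\in C([-T,T];H^s)$ with
\begin{align*}
\norm{u}{X_T^s} &= \norm{u}{L_t^\infty H_x^s} + \norm{\mathcal{D}_x^s \partial_x u}{L_x^\infty L_t^2} + \norm{\mathcal{D}_x^{s-1/4}\partial_x u}{L_x^{20} L_t^{5/2}}\\
&\quad + \norm{\mathcal{D}_x^s u}{L_x^5 L_t^{10}} + \norm{u}{L_x^4 L_t^\infty}
\end{align*}
finite, and define $\Phi(u)=W(t)u_0+\int_0^t W(t-s)\left(6u^2\partial_x u\right)ds$. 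The goal is to prove that for $T$ small, depending only on $\norm{\mathcal{D}_x^{1/4}u_0}{L^2}$, the map $\Phi$ sends a ball of $X_T^s$ into itself and is a contraction there.

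The first ingredient is the sharp smoothing theory for the Airy group $W(t)=e^{-t\partial_{xxx}}$. I would record (i) the Kato local smoothing estimate $\norm{\partial_x W(t)u_0}{L_x^\infty L_t^2}\lesssim\norm{u_0}{L^2}$, which gains a full derivative; (ii) the maximal-function estimate $\norm{W(t)u_0}{L_x^4 L_t^\infty}\lesssim\norm{\mathcal{D}_x^{1/4}u_0}{L^2}$; and (iii) the Strichartz bounds controlling the $L_x^5 L_t^{10}$ and $L_x^{20}L_t^{5/2}$ norms. Each has an inhomogeneous counterpart, obtained from the homogeneous version together with duality and a Christ--Kiselev type argument, yielding in particular
$$\norm{\partial_x \int_0^t W(t-s)F(s)\,ds}{L_x^\infty L_t^2}\lesssim\norm{F}{L_x^1 L_t^2},$$
so that the derivative falling on the nonlinearity is absorbed by the smoothing gain rather than costing regularity.

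The nonlinear estimate then closes by placing the two undifferentiated factors in the maximal-function norm and the differentiated factor in the smoothing norm, e.g.
$$\norm{u^2\partial_x u}{L_x^1 L_t^2}\lesssim\norm{u}{L_x^4 L_t^\infty}^2\,\norm{\partial_x u}{L_x^\infty L_t^2},$$
with the $s$-derivatives distributed by the fractional Leibniz rule of Kenig--Ponce--Vega. Tracking the explicit powers of $T$ produced when one restricts the time integral and compares the $L_t^\infty$ and $L_t^2$ scales gives a gain of a positive power of $T$, which forces the lifespan to scale as $T\sim\norm{\mathcal{D}_x^{1/4}u_0}{L^2}^{-4}$; the same computation produces the contraction, hence existence and uniqueness in $C([-T,T];H^s)$, while smoothness of the data-to-solution map follows by differentiating the fixed-point equation and invoking the implicit function theorem in $X_T^s$. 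The main obstacle is the derivative in the nonlinearity $6u^2\partial_x u$: unlike a semilinear problem there is a genuine loss of one derivative, and the entire argument hinges on the delicate interplay between the local smoothing estimate, which recovers exactly that derivative, and the maximal-function estimate at the critical regularity $s=\frac{1}{4}$, where the fractional Leibniz rule must be applied with care so that no net derivative is lost.
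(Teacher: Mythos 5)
Your proposal follows essentially the same route as the paper: a contraction mapping for the Duhamel operator in the Kenig--Ponce--Vega space built from exactly the norms \eqref{eq:E1}--\eqref{eq:E5}, closed by the local smoothing, maximal-function and Strichartz estimates for the Airy group together with the fractional Leibniz rule; the paper likewise only sketches the estimate $\left\Vert \left|\mathcal{S}(v)\right|\right\Vert_{\mathcal{X}_T^s}\le c\left\Vert u_0\right\Vert_{H^s}+cT^{1/2}\left\Vert\left|v\right|\right\Vert_{\mathcal{X}_T^s}^3$ and defers the linear and nonlinear details to \cite{KPV} and \cite{LP}. One small slip worth fixing: your displayed H\"older application $\left\Vert u^2\partial_x u\right\Vert_{L_x^1L_t^2}\lesssim\left\Vert u\right\Vert_{L_x^4L_t^\infty}^2\left\Vert\partial_x u\right\Vert_{L_x^\infty L_t^2}$ has mismatched spatial exponents ($\tfrac14+\tfrac14+0=\tfrac12\neq1$); the differentiated factor must be placed in $L_x^2L_t^2$ (the spare integrability in time then supplying the positive power of $T$), as in the actual Kenig--Ponce--Vega argument.
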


\begin{proof}
Given $T$ and $\mathcal{C}$, define the space
\begin{equation}
\mathcal{X}_{T}^{s}=\left\{ v\in C\left(\left[-T,T\right]:H^{s}\left(\mathbb{R}\right)\right):\vertiii{v}  _{\mathcal{X}_{T}^{s}}<\infty\right\} \label{eq:XT}
\end{equation}
and
\begin{equation}
\mathcal{X}_{T,\mathcal{C}}^{s}=\left\{ v\in C\left(\left[-T,T\right]:H^{s}\left(\mathbb{R}\right)\right):\vertiii{v}  _{\mathcal{X}_{T}^{s}}\leq\mathcal{C}\right\} \label{eq:XTC}
\end{equation}
where
\begin{align*}
\vertiii{v} _{\mathcal{X}_{T}^{s}} & =\left\Vert \mathcal{D}_{x}^{s}v\right\Vert _{L_{t}^{\infty}\left(\left[-T,T\right]:H^{s}\left(\mathbb{R}\right)\right)}+\left\Vert v\right\Vert _{L_{x}^{4}\left(\mathbb{R}:L_{t}^{\infty}\left[-T,T\right]\right)}\\
 & +\left\Vert \mathcal{D}_{x}^{s}v\right\Vert _{L_{x}^{5}\left(\mathbb{R}:L_{t}^{10}\left[-T,T\right]\right)}+\left\Vert \mathcal{D}_{x}^{s-\frac{1}{4}}\partial_{x}v\right\Vert _{L_{x}^{20}\left(\mathbb{R}:L_{t}^{\frac{5}{2}}\left[-T,T\right]\right)}+\left\Vert \mathcal{D}_{x}^{s}\partial_{x}v\right\Vert _{L_{x}^{\infty}\left(\mathbb{R}:L_{t}^{2}\left[-T,T\right]\right)}.
\end{align*}
To obtain a strong solution to the initial-value problem we need to find
appropriate $T$ and $\mathcal{C}$ such that the operator
\[
\mathcal{S}\left(v,u_{0}\right)=\mathcal{S}\left(v\right)=W\left(t\right)u_{0} + \int_{0}^{t}W\left(t-s\right)\left(6v^{2}\partial_{x}v\left(s\right)\right)\,ds
\]
is a contraction map on $\mathcal{X}_{T,\mathcal{C}}^{s}$.

Using linear estimates for $W\left(t\right)$ and the Leibniz rule
for fractional derivatives one can show that
\[
\vertiii{ \mathcal{S}\left(v\right)}_{\mathcal{X}_{T}^{s}}\leq c\left\Vert u_{0}\right\Vert _{H^{s}}+cT^{\frac{1}{2}}\vertiii{v}_{\mathcal{X}_{T}^{s}}^{3}
\]
where $c$ is from linear estimates etc independent of the initial
data. We refer the reader to Kenig-Ponce-Vega \cite{KPV} and Linares-Ponce \cite{LP}
for details. Then choose $\mathcal{C}=2c\left\Vert u_{0}\right\Vert _{H^{s}}$
and $T$ such that $c\mathcal{C}^{2}T^{\frac{1}{2}}<\frac{1}{4}$,
we obtain that
\[
\mathcal{S}\left(\cdot,u_{0}\right):\,\mathcal{X}_{T,\mathcal{C}}^{s}\rightarrow\mathcal{X}_{T,\mathcal{C}}^{s}.
\]
Similarly, one can also show
\begin{align*}
\vertiii{\mathcal{S}\left(v_{1}\right)-\mathcal{S}\left(v_{2}\right)}_{\mathcal{X}_{T}^{s}} & \leq cT^{\frac{1}{2}}\left(\vertiii {v_{1}}_{\mathcal{X}_{T}}^{2}+\vertiii {v_{2}} _{\mathcal{X}_{T}}^{2}\right)\vertiii{v_{1}-v_{2}}_{\mathcal{X}_{T}^{s}}\\
 & \leq2cT^{\frac{1}{2}}\mathcal{C}^{2}\vertiii{v_{1}-v_{2}} _{\mathcal{X}_{T}^{s}}.
\end{align*}
Therefore, with our choice of $T$ and $\mathcal{C}$, $\mathcal{S}\left(\cdot,u_{0}\right)$
is a contraction on $\mathcal{X}_{T,\mathcal{C}}^{s}$. So there is
a unique fixed point of this $\mathcal{S}\left(\cdot,u_{0}\right)$
in $\mathcal{X}_{T,\mathcal{C}}^{s}$. Hence we obtain the unique strong
solution:
\[
u=\mathcal{S}\left(u\right)=W\left(t\right)u_{0} + \int_{0}^{t}W\left(t-s\right)\left(6u^{2}\partial_{x}u\left(s\right)\right)\,ds.
\]
To check the dependence on the initial data, {using
arguments similar to those above}, one can show that
\begin{align*}
\vertiii{\mathcal{S}\left(u_{1},u_{1}\left(0\right)\right)-\mathcal{S}\left(u_{2},u_{2}\left(0\right)\right)} _{\mathcal{X}_{T_{1}}^{s}} & \leq c\left\Vert u_{1}\left(0\right)-u_{2}\left(0\right)\right\Vert _{H^{s}}\\
 & +cT_{1}^{\frac{1}{2}}\left(\vertiii{u_{1}}_{\mathcal{X}_{T_{1}}^{s}}^{2}+\vertiii{u_{2}} _{\mathcal{X}_{T_{1}}^{s}}^{2}\right)\vertiii{u_{1}-u_{2}}_{\mathcal{X}_{T_{1}}^{s}}.
\end{align*}
This can be used to show that for $T_{1}\in\left(0,T\right)$, the
solution map from a neighborhood $\mathcal{N}$ of $u_{0}$ depending
on $T_{1}$ to $\mathcal{X}_{T_{1},\mathcal{C}}^{s}$ is Lipschitz.
Further work can be used to show the solution map is actually smooth. For more details, see Kenig-Ponce-Vega \cite{KPV} and Linares-Ponce
\cite{LP}.
\end{proof}
Finally, we notice that if $u_{0}$ is Schwartz, then
the solution $u$ to the initial-value problem is also smooth and
hence a classical solution. The uniqueness of the classical solution
is well-known. We refer the reader to Bona-Smith \cite{BS}, Temam
\cite{T69} and Saut-Temam \cite{ST} for the KdV problem and Saut \cite{S79} for more general KdV type equations including the MKdV equation.

\subsection{Solutions of mKdV by inverse scattering and strong solutions}

As before, given $u_{0}\in H^{2,1}\left(\mathbb{R}\right)$, one can
solve the MKdV equation using the inverse scattering transform. 

Recall from \eqref{mkdv.BC}, we have the solution to the MKdV equation in terms of the {solution by inverse scattering}:
\begin{align}\label{eq:BC}
u &=  \left[ \dfrac{-i}{\pi}  \int\mu\left(w_{\theta}^{+}+w_{\theta}^{-}\right) \right]_{12} \\
\nonumber
  & =   \left[ \dfrac{-i}{\pi} \int\left(\mu-I\right)\left(w_{\theta}^{+}  +w_{\theta}^{-}\right)  \right]_{12} + \left[ \dfrac{-i}{\pi}  \int\left(w_{\theta}^{+}+w_{\theta}^{-}\right) \right]_{12} 
\end{align}
where $\mu$ is constructed using the reflection coefficients $r$.
But as we discussed above, using PDE techniques, one can construct
solutions with rougher data at least locally. Motivated by Deift-Zhou
\cite{DZ03}, we try to understand the relations between Beals-Coifman
solutions and strong solutions. First of all, if $u_{0}$ is
Schwartz, one can also show $u$ is Schwartz (cf.Deift-Zhou
\cite{DZ93}). So in this case, the strong solution is the same
as the solution via inverse scattering. Our goal is to identify the {solution by inverse scattering}
with the strong solution whenever the former
makes sense. Starting from the local construction, we will try to
extend these results globally later on.

Firstly, we show that one can always take the limit of a sequence of smooth solutions to the MKdV equation in weighted $L^2$ spaces without regularity assumptions.
\begin{lemma}\label{lem:limit}
Suppose there is a
sequence $\left\{ u_{0,k}\right\} $ of Schwartz functions which is a Cauchy sequence in $H^{j,1}\left(\mathbb{R}\right)$ and $u_{0,k}\rightarrow u_{0}$
in $H^{j,1}\left(\mathbb{R}\right)$ with $j\geq0$. Then for fixed $t>0$, one can always conclude that the sequence of solution $\lbrace u_k \rbrace$ to the MKdV equation with initial data $u_{0,k}$ obtained via inverse scattering in the sense of \eqref{eq:BC} has a $L^\infty$ limit.
\end{lemma}
\begin{proof}
	Since $u_{0,k}$ is Schwartz, from the inverse scattering transform, we can write down  the Beals-Coifman
	solutions
	\begin{equation}
	    \label{int-k}
	    u_{k}=  \left[ \dfrac{-i}{\pi}   \int\mu_{k}\left(w_{k,\theta}^{+}+w_{k,\theta}^{-}\right) \right]_{12}
	\end{equation}
	with initial data $u_{0,k}$. 
	Using the mapping properties of the direct scattering due to Zhou \cite[Theorem 1.8]{Zhou98} and Deift-Zhou \cite[Theorem 3.2]{DZ03}, in terms of reflection
	coefficients, we have that
	\[
	r_{k}=\mathcal{R}\left(u_{0,k}\right)\in H^{1},
	\]
	and by the Lipschitz continuity of the map, we have
	\[
	\left\Vert r_{k}-r_{\ell}\right\Vert _{H^{1}\left(\mathbb{R}\right)}\lesssim\left\Vert u_{0,k}-u_{0,\ell}\right\Vert _{H^{j,1}\left(\mathbb{R}\right)}.
	\]
	{By the integral representation of $u_k$ given in \eqref{int-k}, resolvent estimates in \cite{Zhou98} (see also \cite[(2.19) (2.21)]{DZ03}) and Lipschitz continuity of the direct and inverse scattering map, }one also has
	\[
	\left\Vert u_{\ell}-u_{k}\right\Vert _{L^{\infty}\left(\mathbb{R}\right)}\lesssim\left\Vert r_{k}-r_{\ell}\right\Vert _{H^{1}\left(\mathbb{R}\right)}.
	\]
	Since $r_{k}$ converges to a function $r_{\infty}$ in $H^{1}\left(\mathbb{R}\right)$,
	we claim that the corresponding {solution by inverse scattering} converges to a limit
	\[
	u_{\infty}=\lim_{k\rightarrow\infty}u_{k}
	\]
	in the sense of the $L^{\infty}$ norm. Indeed, we can write
	\begin{align*}
	u_{k} & =  \left[ \dfrac{-i}{\pi}   \int\mu_{k}\left(w_{k,\theta}^{+}+w_{k,\theta}^{-}\right) \right]_{12}\\
	& = \left[ \dfrac{-i}{\pi}   \int\left(\mu_{k}-I\right)\left(w_{k,\theta}^{+}+w_{k,\theta}^{-}\right) \right]_{12}  + \left[ \dfrac{-i}{\pi}   \int\left(w_{k,\theta}^{+}+w_{k,\theta}^{-}\right) \right]_{12}  \\
	& =\textrm{ I}_{k}+\textrm{ II}_{k}.
	\end{align*}
	Then due to the resolvent estimate, $\left(\mu_{k}-I\right)$ is bounded in the $L^{2}$ and the $L^{2}$ estimate for $w_{k,\theta}^{+}+w_{k,\theta}^{-}$
	is straightforward, so $\textrm{ I}_{k}$ makes sense pointwise. For $\textrm{ II}_{k}$,
	one simply notices that $\int\left(w_{k,\theta}^{\pm}\right)$ is
	proportional to $W\left(t\right)\check{r}_{k}=e^{-t\partial_{xxx}}\check{r}_{k}$,
	so by the standard stationary phase analysis, for $r_{k}\in H^{1}$,
	$\textrm{ II}_{k}$ is a function in $L^{\infty}\left(\mathbb{R}\right)$ for $t\geq0$ with
	the standard pointwise decay estimates for the Airy equation  (cf.\cite[Lemma 2.1]{GPR}).
	
	Hence for fixed $t\neq0$
	\[
	\left\Vert u_{k}(t)-u_{\infty}(t)\right\Vert _{L^{\infty}}\rightarrow0\,\ \text{as}\,\ \left\Vert r_{k}-r_{\infty}\right\Vert _{H^{1}}\rightarrow0
	\]as desired.
\end{proof}
\begin{remark}
	Note that a-priori, when we pass the solutions by inverse scattering to the
	pointwise limit above, it is not clear what the limit means since the limit is rougher than
	the required regularity  from the inverse scattering transform when $j<2$.
\end{remark}
In the following subsections, we use PDE techniques to conclude that indeed the limit constructed by the lemma above is a solution to the MKdV equation so long as we have enough regularity to perform the Picard iteration.
First of all, we illustrate that solutions we analyzed in earlier sections are strong solutions.
\begin{corollary}
\label{lem:identi}Suppose $u_{0}\in H^{2,1}\left(\mathbb{R}\right)$
then the {solution by inverse scattering} and the strong solution are the same
(up to a measure zero set)
\begin{align*}
u & =   \left[ \dfrac{-i}{\pi}  \int\mu\left(w_{\theta}^{+}+w_{\theta}^{-}\right) \right]_{12}    \\
 & =W\left(t\right)u_{0} + \int_{0}^{t}W\left(t-s\right)\left(6u^{2}\partial_{x}u\left(s\right)\right)\,ds
\end{align*}
in $\left[-T,T\right]$ where $T$ is given as in Theorem \ref{thm:KVPlocal}.
\end{corollary}

\begin{remark}
At such a high level of regularity, by the uniqueness of
weak solutions, see for example Ginibre-Tsutsumi \cite{GT} and Ginibre-Tsutsumi-Velo
\cite{GTV}, one might expect this identification. But here we provide
a direct approach in this specific situation.
\end{remark}

\begin{proof}
Suppose $u_{0}\in H^{2,1}\left(\mathbb{R}\right)$, we can find a
sequence $\left\{ u_{0,k}\right\} $ of Schwartz functions such that
it is a Cauchy sequence in $H^{2,1}\left(\mathbb{R}\right)$ and $u_{0,k}\rightarrow u_{0}$
in $H^{2,1}\left(\mathbb{R}\right)$.

We {may }assume that for all $k$, there is a uniform bound
\[
\left\Vert u_{0,k}\right\Vert _{\dot{H}^{2}\left(\mathbb{R}\right)}\lesssim\left\Vert u_{0,k}\right\Vert _{H^{2}\left(\mathbb{R}\right)}\lesssim\left\Vert u_{0,k}\right\Vert _{H^{2,1}\left(\mathbb{R}\right)}\leq C.
\]
Then applying Theorem \ref{thm:KVPlocal}, we can find a strong
solution $u_{k}$ with initial data $u_{0,k}$ in $\mathcal{X}_{T,\mathcal{C}}^{2}$
where $T$ and $\mathcal{C}$ are chosen as in Theorem \ref{thm:KVPlocal}. 

By Theorem \ref{thm:KVPlocal}, we also have
\[
\vertiii {u_{k}-u_{\ell}} _{\mathcal{X}_{T,\mathcal{C}}^{2}}\lesssim\left\Vert u_{0,k}-u_{0,\ell}\right\Vert _{H^{2}\left(\mathbb{R}\right)}.
\]
So in $\mathcal{X}_{T,C}^{2}$, $u_{k}$ converges to a limit $u_{\infty}$
which is a strong solution. Using the notation from above, we have
\[
u_{\infty}=\mathcal{S}\left(u_{\infty},u_{0}\right)\in\mathcal{X}_{T,C}^{2}.
\]
From the inverse scattering transform, we also have
solutions via inverse scattering
\[
\tilde{u}_{k}=  \left[ \dfrac{-i}{\pi}   \int\mu_{k}\left(w_{k,\theta}^{+}+w_{k,\theta}^{-}\right) \right]_{12}
\]
with initial data $u_{0,k}$. 

Since $u_{0,k}$ is Schwartz, so $u_{k}$ and $\tilde{u}_{k}$ are
also Schwartz. Therefore we have $u_{k}=\tilde{u}_{k}.$ 
By Lemma \ref{lem:limit}, one can conclude that there exists $\tilde{u}_{\infty}$ such that for $t\neq0$,
\[
\left\Vert \tilde{u}_{k}(t)-\tilde{u}_{\infty}(t)\right\Vert _{L^{\infty}}\rightarrow0.
\] 
By the convergence of the strong solutions, it follows that as $k\rightarrow\infty$, we have
\[
\vertiii{u_{k}-u_{\infty}} _{\mathcal{X}_{T,C}^{2}}=\vertiii{\tilde{u}_{k}-u_{\infty}} _{\mathcal{X}_{T,C}^{2}}\rightarrow0.
\]
In particular, as $k\rightarrow\infty$, one has
\[
\sup_{t\in\left[-T,T\right]}\left\Vert u_{k}-u_{\infty}\right\Vert _{H^{2}\left(\mathbb{R}\right)}=\sup_{t\in\left[-T,T\right]}\left\Vert \tilde{u}_{k}-u_{\infty}\right\Vert _{H^{2}\left(\mathbb{R}\right)}\rightarrow0.
\]
By construction, as $k\rightarrow\infty,$
\[
\sup_{t\in\left[-T,T\right]}\left\Vert \tilde{u}_{k}-\tilde{u}_{\infty}\right\Vert _{L^{\infty}\left(\mathbb{R}\right)}\rightarrow 0.
\]
Hence
\[
u_{\infty}=\tilde{u}_{\infty}
\]
up to a measure zero set. 

Therefore, we can conclude that 
\begin{align*}
u & = \left[ \dfrac{-i}{\pi}  \int\mu\left(w_{\theta}^{+}+w_{\theta}^{-}\right) \right]_{12}  \\
 & =W\left(t\right)u_{0}+ \int_{0}^{t}W\left(t-s\right)\left(6u^{2}\partial_{x}u\left(s\right)\right)\,ds
\end{align*}
in $\left[-T,T\right]$.
\end{proof}
Next, we will try to use this local identification to understand the
limits of  solutions via inverse scattering in various low regularity spaces.

\subsection{Approximation of solutions in $H^{1}\left(\mathbb{R}\right)$}\label{subsec:H1}

First of all, we consider
\[
\partial_{t}u+\partial_{xxx}u -6u^{2}\partial_{x}u=0,\,u\left(0\right)=u_{0}.
\]
with initial data in $H^{1}\left(\mathbb{R}\right)$.

The following three quantities are preserved by the solution flow:
\[
I_{1}\left(u\right)=\int_{-\infty}^{\infty}u\,dx,
\]
\[
I_{2}\left(u\right)=\int_{-\infty}^{\infty}u^{2}\,dx,
\]
\begin{align*}
E\left(u\right)=I_{3}\left(u\right) & =\int_{-\infty}^{\infty}\left[\left(\partial_{x}u\right)^{2} + u^{4}\right]\,dx.
\end{align*}
Using the local existence results and the conservation laws above,
we can extend a local solution to a global solution in $H^{1}\left(\mathbb{R}\right)$.

More precisely, using the Sobolev embedding, one has
\begin{align*}
E\left(u\right) & =\int_{-\infty}^{\infty}\left[\left(\partial_{x}u\right)^{2} + u^{4}\right]\,dx\\
 & \geq\left\Vert \partial_{x}u\right\Vert _{L^{2}\left(\mathbb{R}\right)}^{2} +\left\Vert u\right\Vert _{L^{4}\left(\mathbb{R}\right)}^{4}\\
 & \geq\left\Vert \partial_{x}u\right\Vert _{L^{2}\left(\mathbb{R}\right)}^{2} + c_{4}\left\Vert \partial_{x}u\right\Vert _{L^{2}\left(\mathbb{R}\right)}\left\Vert u\right\Vert _{L^{2}\left(\mathbb{R}\right)}^{3}.
\end{align*}
From $I_{2}$, we know the $L^{2}\left(\mathbb{R}\right)$ norm is
conserved.

If we denote
\[
f\left(t\right)=\left\Vert \partial_{x}u\left(t\right)\right\Vert _{L^{2}\left(\mathbb{R}\right)}
\]
then one has
\[
f^{2}\left(t\right) + c_{4}\left\Vert u\right\Vert _{L^{2}\left(\mathbb{R}\right)}^{3}f\left(t\right)\leq E\left(u_{0}\right)
\]
so $f\left(t\right)$ is bounded globally. In other words,
\[
\left\Vert \partial_{x}u\left(t\right)\right\Vert _{L^{2}\left(\mathbb{R}\right)}\lesssim E\left(u_{0}\right).
\]
Hence with the conserved $L^{2}\left(\mathbb{R}\right)$
norm, we conclude that
\begin{equation}
\left\Vert u\right\Vert _{H^{1}\left(\mathbb{R}\right)}\lesssim\left\Vert u_{0}\right\Vert _{H^{1}\left(\mathbb{R}\right)}.\label{eq:energybound}
\end{equation}
\begin{theorem}
\label{thm:H11}For $u_{0}\in H^{1,1}\left(\mathbb{R}\right)$, the
strong solution given by the Duhamel formulation \eqref{eq:mild} has
the same asymptotics as in our main Theorem \ref{thm:main1}.
\end{theorem}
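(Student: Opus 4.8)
The plan is to establish the asymptotics for $H^{1,1}(\bbR)$ data by a density-plus-uniform-error argument, transplanting Theorem \ref{thm:main1} along an approximating sequence of smoother data. First I would choose a sequence $\{u_{0,n}\}\subset H^{2,1}(\bbR)$ (indeed Schwartz) with $u_{0,n}\to u_0$ in $H^{1,1}(\bbR)$. For each $n$, Theorem \ref{thm:main1} applies verbatim: the Beals--Coifman solution $u_n$ obeys the stated asymptotic expansion in every region, with leading terms determined by the reflection coefficient $r_n=\mathcal{R}(u_{0,n})$ through $\kappa$ (see \eqref{kappa}), $\phi(z_0)$, and the Painlev\'e parameter $r_n(0)$, and --- crucially --- with remainder terms bounded uniformly in $\norm{r_n}{H^1(\bbR)}$.

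Next I would control the two limits that must be reconciled, namely $n\to\infty$ and $t\to\infty$. On the scattering side, the direct map $\mathcal{R}:u_0\mapsto r$ extends continuously from $H^{1,1}(\bbR)$ into $H^1(\bbR)$ (the weight $\langle x\rangle^{s}$ with $s>1/2$ suffices, as recorded in the introduction), so $r_n\to r:=\mathcal{R}(u_0)$ in $H^1(\bbR)$ and $\sup_n\norm{r_n}{H^1(\bbR)}<\infty$. Since $H^1(\bbR)\hookrightarrow C^{1/2}(\bbR)$, the pointwise data $r_n(z_0)\to r(z_0)$ and $r_n(0)\to r(0)$ converge uniformly in $z_0$, whence all leading-order terms (parabolic-cylinder in Region I, Painlev\'e II in Regions II--IV) converge to those built from $r$, while the remainder bounds stay uniform. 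On the PDE side, the conservation laws give the global bound \eqref{eq:energybound}, so the local theory of Theorem \ref{thm:KVPlocal} produces a global strong solution $u\in C(\bbR;H^1(\bbR))$ with data $u_{0}$, and the Lipschitz dependence of the solution map yields $u_n\to u$ in $C([-T,T];H^1(\bbR))$ on each compact interval.

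The heart of the matter is to upgrade these convergences to one that is uniform as $t\to\infty$, so that $\lim_{n}$ and $\lim_{t}$ may be interchanged. For this I would reuse the resolvent estimate behind Lemma \ref{lem:identi}: the Beals--Coifman representation \eqref{mkdv.BC} together with the uniform resolvent bound gives
\[
\sup_{t}\,\norm{u_n(\dotarg,t)-u_m(\dotarg,t)}{L^\infty(\bbR)}\lesssim \norm{r_n-r_m}{H^1(\bbR)},
\]
so $\{u_n\}$ is Cauchy in $C(\bbR;L^\infty(\bbR))$, with a limit $\tilde u$ attained uniformly in time. Matching $\tilde u$ with the strong solution $u$ on $[-T,T]$ by Lemma \ref{lem:identi} and invoking uniqueness of strong solutions identifies $\tilde u=u$ for all $t$. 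Combining the three facts --- uniform-in-$t$ convergence $u_n\to u$, convergence of the leading terms, and $n$-uniform remainder bounds --- the asymptotic formula of Theorem \ref{thm:main1} passes to the limit and holds for $u$. The main obstacle is precisely this interchange of limits: a naive application of Theorem \ref{thm:main1} to each $u_n$ only controls $t\to\infty$ for fixed $n$, and it is the $\norm{r}{H^1(\bbR)}$-uniformity of the error terms, paired with the time-uniform $L^\infty$ bound from the resolvent, that makes the double limit legitimate.
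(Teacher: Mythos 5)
Your proposal is correct and follows essentially the same route as the paper: approximation by $H^{2,1}$ data, bi-Lipschitz continuity of the scattering map into $H^1$ (Zhou), time-uniform $L^\infty$ convergence of the Beals--Coifman solutions via the resolvent bound, identification with the strong solution through Lemma \ref{lem:identi} and the energy bound \eqref{eq:energybound}, and passage to the limit using the $\norm{r}{H^{1}(\bbR)}$-uniform error estimates. Your added remark that $H^1\hookrightarrow C^{1/2}$ forces convergence of the leading-order terms makes explicit a step the paper leaves implicit, but the argument is the same.
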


\begin{proof}
We perform a construction similar to the construction in the proof of Corollary \ref{lem:identi}. Let $\left\{ u_{0,k}\right\} \in H^{2,1}\left(\mathbb{R}\right)$
be a Cauchy sequence in $H^{1,1}\left(\mathbb{R}\right)$ such that
\[
\lim_{k\rightarrow\infty}u_{0,k}\rightarrow u_{0}
\]in $H^{1,1}\left(\mathbb{R}\right)$ and $\sup_{k}\left\Vert u_{0,k}\right\Vert _{H^{1,1}\left(\mathbb{R}\right)}\leq C.$ 

Then we can use the inverse scattering transform to solve the initial-value
problem \eqref{eq:IVP} and obtain solutions $u_{k}$
by inverse scattering
\begin{equation}
u_{k}=  \left[ \dfrac{-i}{\pi}   \int\mu_{k}\left(w_{k, \theta}^{+}+w_{k, \theta}^{-}\right)\right]_{12}\label{eq:BCH1}
\end{equation}
with initial data $u_{0,k}$. 
By Lemma \ref{lem:limit}, one can conclude that there exists $u_{\infty}$ such that for $t\neq0$,
\[
\left\Vert u_{k}(t)-u_{\infty}(t)\right\Vert _{L^{\infty}}\rightarrow0.
\] For $t=0$, this convergence can be implied by Sobolev's embedding.

However,  by Corollary \ref{lem:identi}, we know $u_{k}$ is also
a strong solution, i.e.,
\[ u_{k}(t)=W\left(t\right)u_{0,k} +\int_{0}^{t}W\left(t-s\right)\left(6\left(u_{k}\right)^{2}\partial_{x}\left(u_{k}\right)\right)\,ds.
\]
Then we can use $T$ and $\mathcal{C}$ as in Theorem \ref{thm:KVPlocal}
to conclude that
\[
\vertiii{u_{k}-u_{\ell}} _{\mathcal{X}_{T,\mathcal{C}}^{1}}\lesssim\left\Vert u_{0,k}-u_{0,\ell}\right\Vert _{H^{1}\left(\mathbb{R}\right)}
\]
where $\mathcal{X}_{T,\mathcal{C}}^{1}$ is given as \eqref{eq:XTC}. 

Hence $\left\{ u_{k}\right\} $ is also a Cauchy sequence in $\mathcal{X}_{T,\mathcal{C}}^{1}$
which converges to $u$ satisfying
\[
u(t)=W\left(t\right)u_{0} + \int_{0}^{t}W\left(t-s\right)\left(6u^{2}\partial_{x}u\left(s\right)\right)\,ds
\]
by construction. So $u$ is a strong solution.

By the definition of space $\mathcal{X}_{T,\mathcal{C}}^{1}$ \eqref{eq:XTC},
we have
\[
\lim_{k\rightarrow\infty}\sup_{t\in\left[-T,T\right]}\left\Vert u_{k}-u\right\Vert _{H^{1}\left(\mathbb{R}\right)}=0.
\]
Combining
\[
\lim_{k\rightarrow\infty}\left\Vert u_{k}(t)-u_{\infty}(t)\right\Vert _{L^{\infty}\left(\mathbb{R}\right)}=0
\]
 we can conclude that $u(t)=u_{\infty}(t)$ pointwise (up to a measure zero
set) for $t\in[-T,T]$. 
Since the $H^{1}$ norms of $u$ is uniformly bounded as \eqref{eq:energybound}, 
we can repeat the above construct infinity many times to extend the
interval $\left[-T,T\right]$ to $\mathbb{R}$ and conclude that for
$t\in\mathbb{R}_{+}$
\[
u\left(t\right)=\tilde{u}_{\infty}\left(t\right).
\]
Since $u_{\infty}$ is the pointwise limit of solutions by inverse scattering
which have asymptotic behavior in our main theorem obtained from
the nonlinear steepest descent with uniform error terms estimates, $u_{\infty}$ also has the desired
asymptotics. 
More precisely, we can write
\[
u_{k}\left(x,t\right)=L_{k}\left(x,t\right)+E_{k}\left(x,t\right)
\]
where $L_{k}\left(x,t\right)$ gives the leading order behavior and
$E_{k}\left(x,t\right)$ collects the error term. By
the convergence of scattering data, we know
\[L_{k}\left(x,t\right)\rightarrow L_{\infty}\left(x,t\right)
\]
pointwise. Hence for an arbitrary fixed $t$, as the pointwise limit of $u_{k}\left(t\right)$,
one can write
\[
u(t)=u_{\infty}\left(t\right)=L_{\infty}\left(x,t\right)+E_{\infty}\left(x,t\right)
\]
where the decay estimates for $E_{\infty}\left(x,t\right)$ is the
same as $E_{k}\left(x,t\right)$ due to the uniform error estimates.
Therefore $u$ also has the asymptotic behavior as claimed.
\end{proof}
\begin{remark}
Similar to the situation of the NLS in Deift-Zhou \cite{DZ03}, the
solution $u$ as the limit of the sequences of solutions by inverse scattering also enjoys the conservation law
\begin{align*}
E\left(u\right)=I_{3}\left(u\right) & =\int_{-\infty}^{\infty}\left[\left(\partial_{x}u\right)^{2} + u^{4}\right]\,dx
\end{align*}
 since it is also a strong solution. It is not clear how to obtain
this conservation law using the inverse scattering transform due to
the low regularity.
\end{remark}

\subsection{Approximation of solutions in $H^{\frac{1}{4}}\left(\mathbb{R}\right)$}\label{H1/4}

For the MKdV equation, as in Theorem \ref{thm:KVPlocal}, Kenig, Ponce and
Vega obtained the lowest regularity for the local well-posedness in
$H^{s}\left(\mathbb{R}\right)$ , $s\ge\frac{1}{4},$ in \cite{KPV}.
They also showed in \cite{KPV2} that when $s<\frac{1}{4}$ the data-to-solution
map fails to be uniformly continuous as a map from $H^{s}$ to $C\left(\left[-T,T\right]H^{s}\left(\mathbb{R}\right)\right)$ (see also Christ-Colliander-Tao \cite{CCT}). These imply that the space
$H^{\frac{1}{4}}\left(\mathbb{R}\right)$ has the lowest regularity that the solution
can be obtained by iteration. These local results form the basis for
the global well-posedness. For example one can use the
energy conservation and the $L^{2}$ conservation to obtain the global
well-posedness. But in the space $H^{\frac{1}{4}}$, there is no conservation
laws allow us to do similar extensions. Then one needs to use the
\textquotedblleft I-method\textquotedblright{}, introduced by Colliander-Keel-Staffilani-Takaoka-Tao
\cite{CKSTT}, which plays a great role in constructing global solutions.
They obtained global well-posedness for KdV for $s>-\frac{3}{4}$
and then using the Miura transform to obtain the global well-posedness
for the MKdV equation in $H^{s}\left(\mathbb{R}\right)$ for $s>\frac{1}{4}$.
In Guo \cite{Guo} and Kishimoto \cite{Kis}, the authors use more delicate
spaces to handle "logarithmic divergence" and combine with the
I-method to conclude the global well-posedness for KdV in $H^{-\frac{3}{4}}$.
Then with the Miura transform given by \cite{CKSTT}, they also obtain
the global well-posedness for the MKdV equation in $H^{\frac{1}{4}}$. The most
important ingredient shown in these papers for the MKdV equation is that for some
$\kappa>0$, one has the following growth estimate
\[
\left\Vert u\left(t\right)\right\Vert _{H^{\frac{1}{4}}\left(\mathbb{R}\right)}\lesssim\left(1+t\right)^{\kappa}\left\Vert u_{0}\right\Vert _{H^{\frac{1}{4}}\left(\mathbb{R}\right)}.
\]
\begin{theorem}
\label{thm:H1/41}For $u_{0}\in H^{\frac{1}{4},1}\left(\mathbb{R}\right)$,
the strong solution given by the integral representation \eqref{eq:mild}
has the same asymptotics as in our main Theorem \ref{thm:main1}.
\end{theorem}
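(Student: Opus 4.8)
The plan is to follow the approximation scheme used in the proof of Theorem \ref{thm:H11} almost verbatim, replacing the role of the $H^1$ energy conservation by the polynomial-in-time growth estimate for the $H^{1/4}$ norm. First I would choose a sequence $\{u_{0,k}\} \subset H^{2,1}(\mathbb{R})$ (e.g.\ Schwartz functions) that is Cauchy in $H^{1/4,1}(\mathbb{R})$ and converges to $u_0$, with $\sup_k \|u_{0,k}\|_{H^{1/4,1}} \le C$. For each $k$ the data is regular enough to run the inverse scattering transform, producing Beals--Coifman solutions $u_k = \int \mu_k(w_\theta^+ + w_\theta^-)$ with reflection coefficients $r_k = \mathcal{R}(u_{0,k})$.

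The core convergence step reuses Zhou's bijectivity and bi-Lipschitz continuity. Since the weight exponent is $1$, the map $u_0 \mapsto r$ gains one frequency derivative, so $r_k \in H^1(\mathbb{R})$ and $\|r_k - r_\ell\|_{H^1} \lesssim \|u_{0,k} - u_{0,\ell}\|_{H^{1/4,1}}$; thus $r_k \to r_\infty$ in $H^1$. The resolvent estimates then give $u_k \to u_\infty$ in $L^\infty$ exactly as in the decomposition $\tilde u_k = I_k + II_k$ used above, and because every error bound in Theorem \ref{thm:main1} depends only on $\|r\|_{H^1}$, the limit $u_\infty$ inherits the claimed asymptotics. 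By Lemma \ref{lem:identi} each $u_k$ is simultaneously a strong solution on its interval of existence, and the local well-posedness in $\mathcal{X}^{1/4}_{T,\mathcal{C}}$ (Theorem \ref{thm:KVPlocal} with $s = 1/4$) shows $\{u_k\}$ is Cauchy there, converging to a strong solution $u$ satisfying the Duhamel formula \eqref{eq:mild}; comparing the two limits gives $u = u_\infty$ pointwise on $[-T,T]$.

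The one genuinely new ingredient is the global continuation. In the $H^1$ case the conserved energy $E(u)$ furnished a time-independent bound on $\|u(t)\|_{H^1}$, allowing the local construction to be iterated with a fixed step size. Here no conservation law is available at regularity $1/4$, so instead I would invoke the growth estimate $\|u(t)\|_{H^{1/4}} \lesssim (1+t)^\kappa \|u_0\|_{H^{1/4}}$ of Colliander--Keel--Staffilani--Takaoka--Tao, Guo and Kishimoto. On any fixed interval $[0, t^\ast]$ this bounds the $H^{1/4}$ norm uniformly, hence bounds the quantity $\|\mathcal{D}_x^{1/4} u(t)\|_{L^2}$ that controls the local existence time $T \sim \|\mathcal{D}_x^{1/4} u(t)\|_{L^2}^{-4}$ from below by a positive constant. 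Therefore only finitely many re-starts of the local argument are needed to cover $[0,t^\ast]$, and on each short interval the identification of the Beals--Coifman limit with the strong solution propagates. Letting $t^\ast$ be arbitrary extends $u = u_\infty$ to all of $\mathbb{R}_+$, so $u$ has the asymptotics of Theorem \ref{thm:main1}.

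I expect the main obstacle to be making this continuation step rigorous: one must verify that the polynomial growth bound really does produce a uniform lower bound on the local existence times over a compact time interval (so that the number of iterations stays finite), and that at each re-start the reflection coefficient of the new initial datum still lies in a bounded subset of $H^1$, so that the uniform error constants in Theorem \ref{thm:main1}, together with the resolvent and identification estimates, are not lost as $t^\ast$ grows. A secondary technical point is confirming that Zhou's Lipschitz bounds, stated for $H^{2,1}$ data, transfer to the $H^{1/4,1}$ modulus of continuity used to pass $r_k$ to its $H^1$ limit; this is precisely where the standing assumption that the $x$-weight exceed $\tfrac{1}{2}$ enters.
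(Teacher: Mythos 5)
Your proposal is correct and follows essentially the same route as the paper: approximate by $H^{2,1}$ data, use Zhou's bi-Lipschitz bound $\|r_k-r_\ell\|_{H^1}\lesssim\|u_{0,k}-u_{0,\ell}\|_{H^{1/4,1}}$ to get $L^\infty$ convergence of the Beals--Coifman solutions, identify with the strong solution locally via Lemma \ref{lem:identi} and Theorem \ref{thm:KVPlocal}, and continue globally using the polynomial growth estimate of Guo and Kishimoto (the paper phrases this last step as a contradiction at the first time $t_\star$ where $u\neq u_\infty$, which is equivalent to your finite-covering iteration). Your worry about reflection coefficients at each restart is moot: the scattering transform is applied only at $t=0$, the asymptotics of $u_\infty$ are inherited once and for all from the uniformly bounded $\|r_k\|_{H^1}$, and the restarts only serve to propagate the identification $u=u_\infty$ via the PDE, for which the $H^{1/4}$ convergence $u_k(t_j)\to u(t_j)$ from the previous step suffices.
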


\begin{proof}
As in Theorem \ref{thm:H11}, we first show that locally the limit
of solutions by inverse scattering is the strong solution in $H^{\frac{1}{4}}\left(\mathbb{R}\right)$.
The difference here is that we use the growth rate estimate to extend the identification globally.

Let $\left\{ u_{0,k}\right\} \in H^{2,1}\left(\mathbb{R}\right)$
be a Cauchy sequence in $H^{\frac{1}{4},1}\left(\mathbb{R}\right)$
such that
\[
\lim_{k\rightarrow\infty}u_{0,k}\rightarrow u_{0}
\]
in $H^{\frac{1}{4},1}\left(\mathbb{R}\right)$ and $\sup_{k}\left\Vert u_{0,k}\right\Vert _{H^{\frac{1}{4},1}\left(\mathbb{R}\right)}\leq C.$ 

Using the inverse scattering transform to solve the initial-value
problem \eqref{eq:IVP}, we obtain a sequence of solutions
\begin{equation}
u_{k}= \left[ \dfrac{-i}{\pi}  \int\mu\left(w_{k, \theta}^{+}+w_{k, \theta}^{-}\right) \right]_{12}   .\label{eq:BCH1-1}
\end{equation}
By Lemma \ref{lem:limit}, one can conclude that there exists ${u}_{\infty}$ such that for $t\neq0$,
\[
\left\Vert {u}_{k}(t)-{u}_{\infty}(t)\right\Vert _{L^{\infty}}\rightarrow0.
\] For $t=0$, the pointwise convergence can be achieved by the standard $L^p$ spaces argument up to a subsequence.

Moreover, by Corollary \ref{lem:identi}, we also know $u_{k}$ is also
a strong solution, i.e.,
\[
u_{k}=W\left(t\right)u_{0,k} + \int_{0}^{t}W\left(t-s\right)\left(6\left(u_{k}\right)^{2}\partial_{x}\left(u_{k}\right)\right)\,ds.
\]
Then we can use $T$ and $\mathcal{C}$ as in Theorem \ref{thm:KVPlocal}
to conclude that
\[
\vertiii{u_{k}-u_{\ell}} _{\mathcal{X}_{T,\mathcal{C}}^{\frac{1}{4}}}\lesssim\left\Vert u_{0,k}-u_{0,\ell}\right\Vert _{H^{1}\left(\mathbb{R}\right)}
\]
where $\mathcal{X}_{T,\mathcal{C}}^{\frac{1}{4}}$ is given as \eqref{eq:XTC}. 

Hence $\left\{ u_{k}\right\} $ is also a Cauchy sequence in $\mathcal{X}_{T,\mathcal{C}}^{\frac{1}{4}}$
which converges to $u$ satisfying
\[
u=W\left(t\right)u_{0} + \int_{0}^{t}W\left(t-s\right)\left(6u^{2}\partial_{x}u\left(s\right)\right)\,ds
\]
by construction. 

By the definition of space $\mathcal{X}_{T,\mathcal{C}}^{\frac{1}{4}}$
\eqref{eq:XTC}, we have
\[
\lim_{k\rightarrow\infty}\sup_{t\in\left[-T,T\right]}\left\Vert u_{k}-u\right\Vert _{H^{\frac{1}{4}}\left(\mathbb{R}\right)}=0.
\]
 and combining
\[
\lim_{k\rightarrow\infty}\left\Vert u_{k}(t)-u_{\infty}(t)\right\Vert _{L^{\infty}\left(\mathbb{R}\right)}=0
\]
we can conclude that $u=u_{\infty}$ pointwise in $\left[-T,T\right]$
(up to a measure zero set). 

By the global well-posedness, $u$ exists in $H^{\frac{1}{4}}\left(\mathbb{R}\right)$
globally. By construction, one can also define $u_{\infty}\left(t\right)$
for all $t\in\mathbb{R}$.

By symmetry, we consider $t\ge0$. Suppose $u_{\infty}\left(t\right)= u\left(t\right)$ does not hold
for all $t\ge0$. Let
\[
t_{\star}=\inf\left\{ t\geq0|u_{\infty}\left(t\right)\neq u\left(t\right)\right\} .
\]
Clearly by the above argument, $T<t_{\star}<\infty$. 

By the growth rate estimate from Guo \cite{Guo} and Kishimoto \cite{Kis},
we have for $t\leq t_{\star}$
\[
\left\Vert u\left(t\right)\right\Vert _{H^{\frac{1}{4}}\left(\mathbb{R}\right)}\leq C\left(1+t_{\star}\right)^{\kappa}\left\Vert u_{0}\right\Vert _{H^{\frac{1}{4}}\left(\mathbb{R}\right)}.
\]
Also by construction, for $t<t_{\star},$
\[
u_{\infty}\left(t\right)=u\left(t\right).
\]
By Theorem \ref{thm:KVPlocal}, we can find $\mathcal{C}_{\star}$
and $T_{\star}$ depending on $C\left(1+t_{\star}\right)^{\kappa}\left\Vert u_{0}\right\Vert _{H^{\frac{1}{4}}\left(\mathbb{R}\right)}<\infty$
to construct $\mathcal{X}_{T_{\star},\mathcal{C}_{\star}}^{\frac{1}{4}}$.
Due to the explicit dependence of $T$ on the size of the initial
data in Theorem \ref{thm:KVPlocal}, $T_{\star}\geq\epsilon_{\star}>0$.

By the definition of $t_{\star}$, we have two situations: firstly 
\begin{equation}
u_{\infty}\left(t_{\star}\right)\neq u\left(t_{\star}\right)\label{eq:Situ1}
\end{equation}
or for any $\eta>0$, there exists $t_{\star}<t_{\eta}<t_{\star}+\eta$
such that 
\begin{equation}
u_{\infty}\left(t_{\eta}\right)\neq u\left(t_{\eta}\right)\label{eq:Situ2}
\end{equation}
in particular, we can take $\eta<\frac{\epsilon_{\star}}{8}$.

Again by construction, we have
\[
u_{\infty}\left(t_{\star}-\frac{\epsilon_{\star}}{8}\right)=u\left(t_{\star}-\frac{\epsilon_{\star}}{8}\right).
\]
Applying Theorem \ref{thm:KVPlocal} and the first part of this proof
using space $\mathcal{X}_{T_{\star},\mathcal{C}_{\star}}^{\frac{1}{4}}$,
we have
\[
u_{\infty}\left(t_{\star}-\frac{\epsilon_{\star}}{8}+s\right)=u\left(t_{\star}-\frac{\epsilon_{\star}}{8}+s\right)
\]
for $s\in\left[0,\epsilon_{\star}\right]\subset\left[0,T_{\star}\right].$
In particular, $u_{\infty}\left(t_{\star}\right)=u\left(t_{\star}\right)$
and $u_{\infty}\left(t_{\star}+s\right)=u\left(t_{\star}+s\right)$
for $s\in\left[0,\frac{\epsilon_{\star}}{4}\right].$ This is a contraction
with either \eqref{eq:Situ1} or \eqref{eq:Situ2}. So our assumption
for the existence of $t_{\star}$ fails.

Thus we can conclude that $u_{\infty}\left(t\right)=u\left(t\right)$
for all $t\ge0$. Then  the asymptotic behavior of $u$ is obtained as in Theorem \ref{thm:H11}.
\end{proof}
\begin{remark}
	For an alternative approach using low regularity conservation laws developed in Koch-Tataru \cite{KoTa} and Killip-Visan-Zhang \cite{KiViZh}, see our work on the focusing MKdV in \cite{CL19}.
\end{remark}

\subsection{Approximation of solutions in $L^2(\bbR)$}\label{subsec:L2app}

As we introduced before, it is known from \cite{CCT} and \cite{KPV2} that $H^{\frac{1}{4}}$ is the optimal space to perform the Picard iteration to construction the \emph{strong} solution in the sense of the Duhamel formula.  With appropriation notations and topology, in the work by Harrop-Griffiths-Killip-Visan\cite{HGKV}, the well-posedness of the mKdV equation can be obtained in $H^{\tau}(\mathbb{R})$ with $\tau>-\frac{1}{2}$.:

\begin{theorem}[\cite{HGKV}]\label{thm:gwphgkv}
Let $\tau>-\frac{1}{2}$. Then the mKdV equation \eqref{MKDV} is globally well-posed for all initial data in the sense that the solution map $\Phi$ extends uniquely from Schwartz spaces to a jointly continuous map $\Phi:\,\mathbb{R}\times H^{\tau}(\mathbb{R})\rightarrow H^{\tau}(\mathbb{R}).$
\end{theorem}
The notation of solution used above can be understood as the unique limit of Schwartz solutions. We also refer to Definition 1.1  in Kappeler-Topalov \cite{KaTo} for the interpretation of this notation of solution.  This notation is well-suited for our global approximation argument since the Schwartz solutions can be obtained via the inverse scattering and their asymptotics can be computed with uniform error estimates.


From the view of the standard analysis of Jost functions, it suffices to require the potential to be in $L^1$ which contains $L^{2,s}$ with $s>\frac{1}{2}$. The well-posedness theory above can allow use the extend the asymptotics of solutions of the mKdV equations with initial data in $L^{2,s}$. Again here we focus on  $s=1$.

\begin{theorem}
\label{thm:L21}For $u_{0}\in L^{2,1}\left(\mathbb{R}\right)$, the solution given by Theorem \ref{thm:gwphgkv}
has the same asymptotics as in  Theorem \ref{thm:main1}.
\end{theorem}

\begin{proof}
For any $u_0\in L^{2,1}(\mathbb{R})$, we pick a sequence of Schwartz functions $\{u_{0,k}\}$ 
such that
\begin{equation}\label{eq:convL21}
\lim_{k\rightarrow\infty}u_{0,k}\rightarrow u_{0}
\end{equation}
in $L^{2,1}\left(\mathbb{R}\right)$ and $\sup_{k}\left\Vert u_{0,k}\right\Vert _{L^{2,1}\left(\mathbb{R}\right)}\leq C.$ 

Let $u(x,t)$ be the solution to \eqref{MKDV} in the sense of \ref{thm:gwphgkv} with initial data $u_0$ and $\tau=0$.  Let $u_k(t)$ be the Schwartz solution to \eqref{MKDV} with initial data $u_{0,k}$.  By construction, we know that $\forall t\in \mathbb{R}$, $u_k(t)\rightarrow u(t)$ in $L^2(\mathbb{R})$. Then we also know that up to a subsequence,  $u_k(t)\rightarrow u(t)$ almost everywhere.

Now for each $u_k(t)$, via the nonlinear steepest descent, we can write
\[
u_{k}\left(x,t\right)=L_{k}\left(x,t\right)+E_{k}\left(x,t\right)
\]
where $L_{k}\left(x,t\right)$ gives the leading order behavior and
$E_{k}\left(x,t\right)$ collects the error term which only depend the $L^{2,1}(\mathbb{R})$ norm of $u_{0,k}$.  From the convergence \eqref{eq:convL21}, by the direct scattering, one has the convergence of the reflection coefficients $\lim_{k\rightarrow\infty} r_k=r$ in $H^1(\mathbb{R})$.   Then by
the convergence of reflection coefficients, we know
\[L_{k}\left(x,t\right)\rightarrow L_{\infty}\left(x,t\right)
\]
pointwise. Since the error term $E_k(x,t)$ is uniform in $k$, we can conclude that for an arbitrary fixed $t$, as the pointwise limit of $u_{k}\left(t\right)$ (up to measure zero set),
one can write
\[
u(t)=L_{\infty}\left(x,t\right)+E_{\infty}\left(x,t\right)
\]
where the decay estimates for $E_{\infty}\left(x,t\right)$ is the
same as $E_{k}\left(x,t\right)$ due to the uniform error estimates.
Therefore $u$ also has the asymptotic behavior as claimed.
\end{proof}

\bigskip\noindent

\end{document}